\newtheorem{theorem}{Theorem}[section]
\newtheorem{proposition}[theorem]{Proposition}
\newtheorem{corollary}[theorem]{Corollary}
\newtheorem{lemma}[theorem]{Lemma}
\newtheorem{definition}[theorem]{Definition}
\newtheorem{conjecture}[theorem]{Conjecture}
\newtheorem{remark}[theorem]{Remark}
\newcommand{\g}{\mathfrak{g}}
\newcommand{\V}{{V^1(\mathfrak{psl}_{n|n})}}
\newcommand{\Omin}{\mathbb{O}_{\mathrm{min}}}
\newcommand{\sli}{\mathfrak{sl}}
\newcommand{\C}{\mathbb{C}}
\newcommand{\Z}{\mathbb{Z}}
\newcommand{\Sh}{\mathbb{S}_{\mathrm{min}}}
\newcommand{\levi}{\mathfrak{l}}
\newcommand{\para}{\mathfrak{p}}
\newcommand{\psl}{\mathfrak{psl}}
\theoremstyle{remark}
\newtheorem{example}{Example}
\title{$L_1(\mathfrak{psl}_{n|n})$ from BRST reductions, associated varieties and nilpotent orbits}
\author{Andrea E. V. Ferrari$^{\clubsuit,\diamondsuit}$}
\author{Aiden Suter$^{\spadesuit\heartsuit}$}
\address{$^\clubsuit$Deutsches
Elektronen-Synchrotron DESY, Notkestraße 85, 22607 Hamburg, Germany}
\address{$^\diamondsuit$School of Mathematics, The University of Edinburgh, Mayfield Road, Edinburgh, U.K}
\address{$^\spadesuit$Perimeter Institute for Theoretical Physics, 31 Caroline St N, Waterloo, ON N2L 2Y5, Canada}
\address{$^\heartsuit$Department of Pure Mathematics, University of Waterloo, 200 University Ave W, Waterloo, ON N2L 3G1, Canada}
\email{andrea.e.v.ferrari@gmail.com~and~aidensuter@outlook.com}
\begin{document}

\vspace*{-3.72em}
\begin{flushright}
DESY-24-132
\end{flushright}

\begin{abstract}

We verify a conjecture of~\cite{beemFreeFieldRealisation2023} stating that a certain family of physically motivated BRST reductions of beta-gamma systems and free fermions is isomorphic to $L_1(\psl_{n|n})$, and that its associated variety is isomorphic as a Poisson variety to the minimal nilpotent orbit closure $\overline{\Omin(\sli_n)}$. This shows in particular that $L_1(\psl_{n|n})$ is quasi-lisse. Combining this with other results in the literature (in particular~\cite{ballin3dMirrorSymmetry2023}), this paper provides a concrete and important example of how one can extract two symplectic dual varieties from a rather well-known vertex operator algebra.
\end{abstract}

\maketitle

\tableofcontents

\section{Introduction}

3-dimensional supersymmetric quantum field theories (QFT) with eight supercharges provide the natural context for a conjectural mathematical duality between conical symplectic singularities, called symplectic duality~\cite{bradenQuantizationsConicalSymplectic2022,bradenQuantizationsConicalSymplectic2022a}. The starting point is that a generic theory $\mathcal{T}$ has two distinct branches of its moduli spaces of vacua, the Higgs branch and Coulomb branch, which under some assumptions are conjecturally symplectic dual pairs. In the case of a gauge theory, which is determined by data $(G,R)$ constituted by a (compact, connected) gauge group $G$ and a quaternionic representation $R$, the Higgs branch $M_H$ can be defined as a hyperk\"ahler quotient
\begin{equation*}\label{eq:higgs}
    M_H = R /\!\!/\!\!/ G
\end{equation*}
or alternatively (picking a complex structure and forgetting about the hyperk\"ahler metric) as a holomorphic symplectic quotient
\begin{equation}\label{eq:higgs2}
 M_H = \mu_{\mathbb{C}}^{-1}(0) /\! \! /G_{\mathbb{C}}.
\end{equation}
Here $\mu_{\mathbb{C}}$ is the complex moment map for the action of $G$ on $R$ and $G_{\mathbb{C}}$ is a complexification of the group $G$. In the case $R=T^*V$ for some complex representation $V$, the Coulomb branch is given instead by the famous BFN construction~\cite{Braverman:2016wma}. One of the most elementary examples, and the one relevant for this paper, arises from so-called supersymmetric QED with $n\geq 2$ hypermultiplets, in short SQED$[n]$. In this case $G=U(1)$ and $R=T^* \mathbb{C}^n$ where $\mathbb{C}$ is the fundamental representation of $U(1)$. The Higgs branch of SQED$[n]$ can be easily computed to be $\overline{\Omin(\sli_n)}$, whereas its Coulomb branch is the $A_{n-1}$ singularity.\\

A lot of insights on symplectic duality can be gained by inspecting topological twists of such a theory $\mathcal{T}$. For any $\mathcal{T}$ there exists two such twists, the A-~and~B-twist, and therefore two cohomological TQFTs $\mathcal{T}^A$ and $\mathcal{T}^B$. In the bulk of the paper we will simply refer to these two twists as ``3d A-model" and ``3d B-model", respectively. The most elementary observation relevant for symplectic duality in this TQFT context is that the topological local operators of $\mathcal{T}^A$ are closely related to the ring of functions on the Coulomb branch, $\mathbb{C}[M_C]$, whereas topological local operators of $\mathcal{T}^B$ are closely related to the rings of functions on the Higgs branch, $\mathbb{C}[M_H]$.\footnote{It is worth noting that 3d Mirror Symmetry is aimed at establishing isomorphisms between the 3d A-model and B-model of pairs of theories $\mathcal{T}$ and $\mathcal{T} ^\vee$. In particular, the Higgs/Coulomb branch of a theory $\mathcal{T}$ is isomorphic to the Coulomb/Higgs branch of the other theory $\mathcal{T}^\vee$. We leave the investigation of our results in light of mirror symmetry to future work.}\\

A great deal of progress in understanding $\mathcal{T}^{A/B}$ has been made by studying boundary vertex operator algebras (VOAs). By considering the 3d A-~and~B-model of a gauge theory $\mathcal{T}^{A/B}$ on the geometry $M=\mathbb{R}^+\times \mathbb{C}$ (endowed with suitable boundary conditions), two VOAs supported at the boundary $\partial M = \{ 0 \} \times \mathbb{C}$ may be constructed \cite{gaiottoTwistedCompactifications3d2018,costelloHiggsCoulombBranches2018,costelloVertexOperatorAlgebras2019}. We simply denote these by $V^A$ and $V^B$ (and decorate them by labels that specify the theory, whenever necessary). Importantly, in the A-twist it is often necessary to introduce additional boundary free fermions to define $V^A$. When this is done $V^A$ can be identified with a relative BRST reduction modelled on the holomorphic symplectic quotient~\eqref{eq:higgs} used to defined the Higgs branch, but with the free fermions included (see Definition~\ref{def:VA} below for more details). One natural question to ask is then how the two VOAs are related to the Higgs and Coulomb branch of the theory, and whether one can extract symplectic dual pairs from a given VOA.\\

It was already conjectured in~\cite{costelloHiggsCoulombBranches2018}~and~\cite{costelloVertexOperatorAlgebras2019} that (provided the boundary condition supports enough local operators, in a technical sense) the derived endomorphism algebra the tensor unit in a suitable category of modules for $V^A/V^B$ is isomorphic to the ring of local operators of the respective TQFT, and therefore closely related to algebra of functions on the Coulomb/Higgs branch. It was further conjectured that certain modules for the boundary VOA would correspond to bulk line operators in $\mathcal{T}^{A/B}$. The structure of this category of modules and many aspects of this conjecture were proven for abelian gauge theories (that is, gauge theories where $G$ is a torus) in \cite{ballin3dMirrorSymmetry2023}. In particular, in the special case of SQED[$n$] we can immediately verify (see Theorem~8.32 and Proposition~8.39 therein)\\
\begin{theorem}[\mbox{\cite{ballin3dMirrorSymmetry2023}}]\label{tudproof}
For SQED[$n$], with $\mathbf{1}^A\in \mathcal{C}^A$ the tensor unit in a suitable category of modules $\mathcal{C}^A$ for $V^A$, we have
 \begin{align*}
     \mathrm{Ext}^\bullet \left(\mathbf{1}^A,\mathbf{1}^A\right) \cong \mathbb{C}[A_{n-1}].&
 \end{align*}
\end{theorem}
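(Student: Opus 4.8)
The statement follows by specialising the general computation of~\cite{ballin3dMirrorSymmetry2023} for abelian gauge theories to the case $(G,R)=(U(1),T^*\C^n)$, and I sketch the structure that such an argument would take. The guiding principle is that $\mathrm{Ext}^\bullet(\mathbf{1}^A,\mathbf{1}^A)$ computes the algebra of topological local operators of the 3d A-model of SQED$[n]$, which is the Coulomb branch chiral ring $\C[M_C]$; since $M_C\cong\C^2/\Z_n$ is the $A_{n-1}$ singularity, matching the two sides amounts to an honest cohomology computation. The plan is therefore threefold: (i) present $V^A$ concretely as a BRST reduction; (ii) describe the category $\mathcal{C}^A$ and its tensor unit explicitly enough to set up the $\mathrm{Ext}$ computation; and (iii) carry out that computation and identify the resulting ring with $\C[A_{n-1}]$.

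First I would realise $V^A$ as the relative semi-infinite (BRST) cohomology of the complex assembled from $n$ copies of the $\beta\gamma$ system $(\beta_i,\gamma_i)$, the boundary free fermions needed to cancel the gauge anomaly, and the $bc$ ghost system for $U(1)$, with differential built from the total current $J=\sum_i{:}\beta_i\gamma_i{:}+(\text{fermionic})$. Since $G=U(1)$ is a torus, its cocharacter lattice is $\Z$, and the natural objects of $\mathcal{C}^A$ are the images under the reduction of the spectral-flow twists of the vacuum, indexed by the monopole charge $k\in\Z$; the tensor unit $\mathbf{1}^A$ is the image of the untwisted ($k=0$) vacuum.

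Next I would compute $\mathrm{Ext}^\bullet(\mathbf{1}^A,\mathbf{1}^A)$ as the relative semi-infinite cohomology $H^{\infty/2+\bullet}\!\big(\widehat{\mathfrak{gl}}_1;\,-\big)$ of the endomorphism complex. Because the gauge group is a single $U(1)$, this reduces to a tractable Lie-algebra-cohomology problem, and one expects the cohomology to be generated by two monopole-type classes $V_\pm$ of charge $\pm1$ together with a class $\varphi$ playing the role of the vector-multiplet scalar coming from the ghost zero modes; tracking conformal weights and fermion number fixes the dimensions of $V_\pm$. Computing the fusion of these generators inside the cohomology should then yield the single relation $V_+V_-\sim\varphi^{\,n}$, exhibiting the algebra as $\C[x,y,z]/(xy-z^n)\cong\C[A_{n-1}]$.

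The main obstacle is precisely this last step: proving that the cohomology is concentrated in the expected degrees—in particular the vanishing of the higher $\mathrm{Ext}$ groups that would otherwise obstruct identifying the algebra with a classical coordinate ring—and pinning down the relation among $V_+,V_-,\varphi$ exactly rather than up to undetermined structure constants. This is exactly the content requiring the detailed analysis of the module category and the fermionic resolution carried out in~\cite[Theorem~8.32 and Proposition~8.39]{ballin3dMirrorSymmetry2023}, on which I would ultimately rely.
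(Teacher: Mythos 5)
Your high-level strategy is genuinely different from the paper's. You propose to compute $\mathrm{Ext}^\bullet(\mathbf{1}^A,\mathbf{1}^A)$ head-on, on the A-side, as relative semi-infinite cohomology of an endomorphism complex, exhibiting monopole-type generators $V_\pm$, a scalar class $\varphi$, and the relation $V_+V_-\sim\varphi^n$ --- i.e.\ a direct verification that the Ext algebra is the Coulomb branch chiral ring of SQED$[n]$. The paper does none of this: its proof converts the A-side problem into a B-side one via mirror symmetry. Concretely, it invokes Theorem~8.32 of \cite{ballin3dMirrorSymmetry2023}, which gives a categorical equivalence $\mathcal{C}^A_\rho \simeq \mathcal{C}^B_{\rho^\vee}$ with the B-model category of the mirror theory (defined by the matrix $\rho^\vee$), and then Proposition~8.39, which identifies the self-extensions of the tensor unit in that B-side category with the coordinate ring of the \emph{Higgs} branch of the mirror theory; that Higgs branch is the $A_{n-1}$ singularity by the elementary symplectic-quotient computation. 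No monopole operators, no relation $V_+V_-=\varphi^n$, and no A-side cohomology-concentration argument ever appear in the paper's proof.

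This mismatch is where your proposal has a genuine gap. In your final paragraph you defer exactly the hard steps --- vanishing of higher cohomology and the precise relation among $V_+,V_-,\varphi$ --- to Theorem~8.32 and Proposition~8.39, describing them as the analysis that carries out your computation. But those results do not perform the direct A-side computation you sketch; they prove something different (the braided tensor equivalence with the mirror B-model category, and a B-side Higgs-branch identification of self-Exts), which implies the theorem only along the paper's route. So as written, your plan cannot be closed by citing them: you would either have to genuinely carry out the direct Coulomb-branch computation (a substantial piece of new work, essentially a VOA-level derivation of $xy=z^n$ from fusion of monopole classes), or abandon steps (ii)--(iii) and adopt the mirror-symmetry argument. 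Your guiding heuristic (the Ext algebra is $\C[M_C]$ for SQED$[n]$) is the correct physical expectation, but in the cited literature it is established by passing to the mirror dual and computing a Higgs branch, not by the A-side analysis you outline.
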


Another natural way to extract a variety from $V^A/V^B$ that is at least Poisson from a VOA is of course to consider its associated variety~\cite{arakawaRemarkC_2Cofiniteness2010}. In the more familiar context of VOAs arising form 4d $\mathcal{N}=2$ SCFTs, a fruitful conjecture of Beem and Rastelli~\cite{Beem:2017ooy} states that the associated variety is isomorphic to the Higgs branch. In work of Beem and one of the authors~\cite{beemFreeFieldRealisation2023}, based on observations of~\cite{costelloVertexOperatorAlgebras2019}, the tantalising form of the BRST reduction defining the VOAs as well as considerations about dimensional reductions of the set-up  of Beem-Rastelli\footnote{See~\cite{Dedushenko:2023cvd} for a more recent and detailed study of this reduction.}, the following conjecture was formulated
\begin{conjecture}[\mbox{\cite{beemFreeFieldRealisation2023}}]\label{mainconjecture}
For an abelian gauge theory in the A-twist with free fermions as additional boundary degrees of freedom and boundary VOA $V^A$, the associated variety $X_{V^A}$ is isomorphic to the Higgs branch of the theory.
\end{conjecture}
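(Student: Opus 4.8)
The plan is to establish Conjecture~\ref{mainconjecture} in the case relevant to this paper, namely SQED$[n]$, where the Higgs branch \eqref{eq:higgs2} is $\overline{\Omin(\sli_n)}$; the general abelian case follows the same template. The first step is to make the relative BRST reduction of Definition~\ref{def:VA} fully explicit. For $G=U(1)$ and $V=\C^n$ the input is a system of $n$ $\beta\gamma$-pairs $\beta_i,\gamma^i$ together with $n$ free-fermion (bc) pairs, all of charge $1$ under the gauged diagonal $U(1)$, and the differential is $d=\oint J_{\mathrm{BRST}}$ built from the $U(1)$-current and the ghosts. The free fields carry a global $\mathfrak{gl}_{n|n}$ symmetry, and I would construct out of their bilinears a set of BRST-closed currents realising $\widehat{\psl}_{n|n}$ at level $1$, verify their OPEs, and argue that the induced map $\V\to H^\bullet_{\mathrm{BRST}}$ is surjective and factors through the simple quotient. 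This establishes $V^A\cong L_1(\psl_{n|n})$ and, crucially, that the cohomology is concentrated in degree zero.

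With this identification the associated variety is computed at the level of Zhu's $C_2$-algebra $R_{V^A}=V^A/C_2(V^A)$, the classical limit of the BRST complex. The strategy is to show that passing to the associated variety commutes with the reduction, so that $R_{V^A}$ is the coordinate ring of the classical symplectic quotient: the $\beta\gamma$-system contributes $R_{\beta\gamma}=\C[\beta_i,\gamma^i]=\C[T^*\C^n]$, the free fermions are lisse and so contribute only a point after passing to the reduced spectrum, and the $U(1)$-BRST differential imposes the moment-map relation $\mu=\sum_i\beta_i\gamma^i=0$ while restricting to $U(1)$-invariants. Writing $M_{ij}=\gamma^i\beta_j$ for the invariant generators one then recognises
\begin{equation*}
    \big(R_{V^A}\big)^{\mathrm{red}}\;\cong\;\C\!\left[\mu^{-1}(0)\right]^{U(1)}\;\cong\;\C\!\left[\overline{\Omin(\sli_n)}\right],
\end{equation*}
i.e. the rank-one, square-zero traceless matrices cutting out $\overline{\Omin(\sli_n)}$, reproducing the Higgs branch; one checks that the Poisson bracket induced by the $\lambda$-bracket matches the Kirillov--Kostant bracket, giving the isomorphism of \emph{Poisson} varieties. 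As an independent consistency check, one can decompose $L_1(\psl_{n|n})$ over its even affine subalgebra $\widehat{\sli}_n\oplus\widehat{\sli}_n$, which sits at opposite levels $\pm1$ because of the supertrace form, and read off the same variety, using that the level-$1$ factor is lisse and contributes only the origin.

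The main obstacle is the commutation of the associated-variety functor with the BRST reduction, which is not formal. I expect the difficulty to lie in controlling the spectral sequence attached to the standard (conformal-weight) filtration on the BRST complex: one must prove that it degenerates, that the cohomology of the classical complex agrees with the classical limit of $H^\bullet_{\mathrm{BRST}}$, and --- most delicately --- that the resulting ideal in $\C[\beta_i,\gamma^i]$ is exactly the (radical) ideal of $\mu^{-1}(0)/\!/U(1)_{\C}$, with no spurious relations or failure of reducedness. Equivalently, in the direct approach on $L_1(\psl_{n|n})$ one must identify the full ideal generated by the images in $R$ of the singular vectors of $\V$ and show that it cuts out precisely the reduced minimal orbit closure rather than a proper subscheme; pinning down this ideal is the crux of the argument, and it is here that the explicit free-field description should do the decisive work.
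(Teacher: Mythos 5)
Your proposal has a genuine gap, and it sits exactly where you place it yourself: the claim that passing to the associated variety commutes with the BRST reduction. This commutation is not a technical lemma to be supplied later --- it is essentially the whole content of Conjecture~\ref{mainconjecture}. As the paper stresses in its introduction, the statement $X_{V^A}\cong M_H$ is equivalent, in these examples, to the principle that ``chiralisation commutes with reduction,'' precisely because $V^A$ is \emph{defined} by a BRST reduction modelled on the quotient~\eqref{eq:higgs2}. The classical computation $\C[\mu^{-1}(0)]^{U(1)}\cong\C[\overline{\Omin(\sli_n)}]$ is the easy, known part; all of the difficulty lies in showing that the $C_2$-algebra of the cohomology agrees with the classical quotient, with no extra surviving classes and no failure of reducedness. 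You defer this to a spectral-sequence degeneration that you do not establish, and no general theorem of this kind is available --- which is exactly why the conjecture is open in general and why the paper takes a different route. Your first step has a related soft spot: even granting a surjection from $V^1(\psl_{n|n})$ onto the degree-zero cohomology, concluding that the image is the \emph{simple} quotient $L_1(\psl_{n|n})$ requires either simplicity of the cohomology or control of the maximal submodule of $V^1(\psl_{n|n})$, neither of which you supply (and the latter is explicitly left as a conjecture in the paper).

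The paper circumvents both issues, and proves only the SQED$[n]$ instance of the conjecture. For the identification $V^A\cong L_1(\psl_{n|n})$ it does not construct currents inside the cohomology; it instead takes the explicit decomposition of $V^A_\rho$ into singlet-algebra and Fock modules (Proposition~\ref{cats}, imported from the abelian BRST results of~\cite{ballin3dMirrorSymmetry2023}) and matches it term by term with the known realisation of $L_1(\psl_{n|n})$ as a simple current extension of $L_1(\sli_n)\otimes L_{-1}(\sli_n)$; simplicity then comes for free. For the associated variety it works entirely on the affine side, never touching the BRST complex: it exhibits the singular vector $\chi=E^{1,2n-1}_{(-1)}E^{1,2n}_{(-1)}\ket{0}$ of $V^1(\psl_{n|n})$, shows the submodule $U=\langle\chi\rangle$ contains the singular vectors of the $V^{\pm1}(\sli_n)$ subalgebras, invokes the Arakawa--Moreau results ($X_{L_{-1}(\sli_n)}$ is a sheet closure for $n>3$, $X_{L_1(\sli_n)}$ a point) to confine $X_{L_1(\psl_{n|n})}$ to the sheet, produces explicit vectors in $U$ whose images in the reduced $C_2$-algebra are all $2\times2$ minors, and finally uses non-lisseness together with $SL_n$-invariance and closedness to force equality with $\overline{\Omin(\sli_n)}$. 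If you want to pursue your approach, the decisive missing ingredient is a proof that the associated graded of the BRST cohomology is the classical reduction; absent that, your argument assumes what it sets out to prove.
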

The conjecture was substantiated in several ways, \emph{in primis} by considering free field realisations modelled on the Higgs branch geometry. It has some striking consequences. Since as we mentioned above $V^A$ is defined in terms of a BRST reduction modelled on the quotient~\eqref{eq:higgs}, this would imply that in these examples the principle ``chiralisation commutes with reduction" holds true. Moreover, since the Higgs branches have finitely many symplectic leaves, it would provide a recipe to construct potentially new examples of quasi-lisse VOAs.\footnote{The statement that such a conjecture holds true for non-abelian theories as well was subsequently presented and motivated in~\cite{Coman:2023xcq}.} \\

More precise statements were conjectured in~\cite{beemFreeFieldRealisation2023} for SQED[$n$]. In particular, it was conjectured (based on observations of~\cite{costelloVertexOperatorAlgebras2019}) that
\begin{conjecture}[\mbox{\cite{beemFreeFieldRealisation2023}}]\label{mainconjecture2}
For SQED[$n$], we have
\begin{enumerate}
    \item  \begin{align*}
        &&V^A&\cong L_1(\psl_{n|n})&
    \end{align*}
 and so, by Conjecture~\ref{mainconjecture}
\item  \begin{align*}
    &&X_{L_1(\psl_{n|n})}&\cong \overline{\Omin(\sli_n)}.&
\end{align*}
\end{enumerate}
\end{conjecture}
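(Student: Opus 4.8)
The plan is to prove the two assertions in sequence: first the vertex-algebra isomorphism $V^A\cong L_1(\psl_{n|n})$ of part (1), and then to read off the associated variety of part (2) from a free-field/reduction description and transport it along that isomorphism. Throughout I would use the presentation of $V^A$ from Definition~\ref{def:VA} as a relative $U(1)$ BRST reduction of the matter system $\mathcal{M}=\mathcal{S}^{\otimes n}\otimes\mathcal{F}^{\otimes n}$, where $\mathcal{S}$ is a symplectic-boson ($\beta\gamma$) pair and $\mathcal{F}$ a free-fermion ($bc$) pair, the $n$ fermions being exactly those needed to cancel the $U(1)$ gauge anomaly of the $n$ charge-$+1$ symplectic bosons. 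Arranging $(\beta_1,\dots,\beta_n\mid b_1,\dots,b_n)$ as the fundamental of $\mathfrak{gl}_{n|n}$ and $(\gamma\mid c)$ as its dual, the normally-ordered bilinears generate a copy of $V^1(\mathfrak{gl}_{n|n})$ inside $\mathcal{M}$, the level being pinned to $1$ by a single current-current OPE computation. The structural point I would verify first is that at this level the two abelian currents $J_I=\sum_i{:}\beta_i\gamma_i{:}+\sum_a{:}b_ac_a{:}$ (the identity/gauge direction) and $J_{\mathrm{str}}=\sum_i{:}\beta_i\gamma_i{:}-\sum_a{:}b_ac_a{:}$ (the supertrace direction) each have vanishing self-OPE and pair hyperbolically with one another, since the bosons contribute level $-1$ and the fermions level $+1$; this null pairing is precisely the feature distinguishing $\psl_{n|n}$ inside $\mathfrak{gl}_{n|n}$.

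Next I would analyse the reduction itself. The gauged $U(1)$ is $J_I$, and the relative complex is $\mathcal{M}\otimes\mathcal{E}$ with $\mathcal{E}$ the ghost $bc$-system and differential $d=\oint c^{\mathrm{gh}}J_I$, taken on $\ker(J_I)_0\cap\ker b^{\mathrm{gh}}_0$. Because $J_I$ is null and pairs with $J_{\mathrm{str}}$, passing to cohomology both trivialises $J_I$ and removes $J_{\mathrm{str}}$, so the surviving currents realise $\widehat{\psl}_{n|n}$ at level $1$ and yield a vertex-algebra map $\phi\colon\V\to V^A$. I would then show $\phi$ factors through the simple quotient and is an isomorphism: surjectivity via a free-field/PBW argument showing the cohomology is strongly generated by the affine currents, and, after proving the cohomology is concentrated in degree $0$, a matching of graded dimensions in which the character of $V^A$ equals the computable Euler characteristic of the complex (a specialisation of the product of free-field and ghost characters onto $J_I$-charge zero). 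Simplicity of $L_1(\psl_{n|n})$ then forces $\phi$ to descend to and identify with the simple quotient.

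For part (2) I would compute the associated variety on the free-field side and transport it. The reduced $C_2$-algebra of $\mathcal{M}$ is $\C[\beta_i,\gamma_i]$, the fermions contributing nothing after reduction, so $X_{\mathcal{M}}=T^*\C^n$ with moment map $\mu=\sum_i\beta_i\gamma_i$ for the gauge $U(1)$; the classical reduction $\mu^{-1}(0)/\!\!/\C^\times$, i.e. the traceless rank-$\le 1$ matrices $\beta_i\gamma_j$, is exactly $\overline{\Omin(\sli_n)}$, matching the Higgs branch of SQED$[n]$. The content is then to show that taking the associated variety commutes with the quantum BRST reduction, i.e. that the $C_2$-algebra of $V^A$ is the reduced coordinate ring of $\overline{\Omin(\sli_n)}$ with its Poisson bracket; I would establish this by a good-filtration/spectral-sequence argument identifying the associated graded of the reduction with the classical Hamiltonian reduction. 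Composing with part (1) gives $X_{L_1(\psl_{n|n})}\cong\overline{\Omin(\sli_n)}$ as Poisson varieties, sitting in the bosonic $\sli_n$ summand of the even part $(\psl_{n|n})_{\bar 0}\cong\sli_n\oplus\sli_n$.

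I expect the main obstacle to be twofold and homological. First, the vanishing of higher BRST cohomology must be proven, both to equate the character with the Euler characteristic in part (1) and to make the reduction well-behaved in part (2); for the abelian $U(1)$ with all matter of charge $+1$ this should be accessible by a careful analysis of the ghost-plus-current action, but it is not automatic. Second, and genuinely delicate, is showing that the associated variety commutes with the reduction as \emph{Poisson} varieties: one must control the $C_2$-algebra of $V^A$ tightly enough to exclude spurious nilpotents or extra components, since a priori the quantum reduction could differ from its classical counterpart, and only a reducedness argument together with the explicit generators coming from $\phi$ will pin it down to $\overline{\Omin(\sli_n)}$.
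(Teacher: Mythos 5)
Your plan diverges from the paper's proof at both steps, and each divergence hides a genuine gap. The serious one is part (2). You propose to prove that ``taking the associated variety commutes with the quantum BRST reduction'' by a good-filtration/spectral-sequence argument identifying the associated graded of the reduction with the classical Hamiltonian reduction. But that commutation statement \emph{is} Conjecture~\ref{mainconjecture}: it is precisely the open content the paper is verifying, not a lemma one can establish by standard homological techniques. The natural spectral sequence relates $R_{V^A}$ to a \emph{vertex Poisson} (arc-space) reduction, not to the finite-dimensional quotient $\mu^{-1}(0)/\!\!/\C^\times$, and there is no available collapse or reducedness argument excluding extra components or nilpotents --- this is exactly why the $n=2$ proof of~\cite{beemFreeFieldRealisation2023} did not generalise and why this paper exists. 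The paper's route avoids the reduction entirely: it exhibits an explicit singular vector $\chi=E^{1,2n-1}_{(-1)}E^{1,2n}_{(-1)}\ket{0}$ in $\V$ generating a submodule $U$ contained in the maximal submodule; shows $U$ contains the images of the singular vectors of the subalgebras $V^{\pm1}(\sli_n)$, so that by Arakawa--Moreau $X_{L_1(\psl_{n|n})}$ lies in $\{pt\}\times\overline{\Sh(\sli_n)}$ for $n>3$; produces further explicit elements $u_{i,k,j,l}\in U$ whose images in the reduced $C_2$-algebra are \emph{all} $2\times2$ minors of $\sli_n$, forcing $X_{L_1(\psl_{n|n})}\subseteq\overline{\Omin(\sli_n)}$; and finally pins down equality using the Gorelik--Kac fact that $L_1(\psl_{n|n})$ is not lisse together with the absence of proper nontrivial closed $SL_n$-invariant subvarieties of $\overline{\Omin(\sli_n)}$. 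Note this argument never uses part (1), whereas yours depends on it.

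Part (1) also has a gap as stated. A surjection $\phi\colon\V\to V^A$ plus concentration of cohomology in degree $0$ and an Euler-characteristic computation does not show $V^A$ is simple: knowing $\mathrm{ch}\,V^A$ equals the Euler characteristic only identifies $V^A$ with $L_1(\psl_{n|n})$ if you already know $\mathrm{ch}\,L_1(\psl_{n|n})$ independently, and the submodule structure of $\V$ is exactly what is not understood (``simplicity of $L_1$ forces $\phi$ to descend'' is backwards --- you need simplicity of $V^A$, or that $\ker\phi$ is maximal). The paper sidesteps this by never constructing $\phi$: it matches the Ballin--Niu free-field decomposition of $V^A_\rho$ (Proposition~\ref{cats}) against the known realisation of $L_1(\psl_{n|n})$ as a simple current extension of $L_1(\sli_n)\otimes L_{-1}(\sli_n)$~\cite{adamovicConformalEmbeddingsAffine2019,creutzigTensorCategoriesAffine2021}, with both sides expanded into the same direct sum of simple singlet-times-Fock modules via Theorems~\ref{level1simples} and~\ref{negativedecomp}; simplicity is built into the cited results. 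If you want to salvage your approach to (1), you would in effect have to import that simple current extension anyway to control the character, at which point you have reproduced the paper's argument.
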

The following special case was moreover proved:
\begin{theorem}[\mbox{\cite{beemFreeFieldRealisation2023}}]\label{bfproof}
For $n=2$ we have
 \begin{align*}
     &&X_{L_1(\psl_{n|n})}&\cong \overline{\Omin(\sli_2)}.&
 \end{align*}
\end{theorem}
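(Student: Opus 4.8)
The plan is to compute the associated variety intrinsically as the reduced spectrum of Zhu's $C_2$-algebra, $X_{L_1(\psl_{2|2})} = \big(\operatorname{Spec} R\big)_{\mathrm{red}}$ with $R = L_1(\psl_{2|2})/C_2\big(L_1(\psl_{2|2})\big)$, and to exploit the $\Z_2$-graded structure of $\psl_{2|2}$. Since $R$ is a supercommutative Poisson algebra, a quotient of $R_{V^1(\psl_{2|2})} = S(\psl_{2|2})$, every odd generator is nilpotent and therefore does not contribute to the reduced variety; hence $X_{L_1(\psl_{2|2})}$ is a conical, $SL_2 \times SL_2$-invariant Poisson subvariety of $\g_{\bar 0}^* = (\sli_2 \oplus \sli_2)^* \cong \C^6$. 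A key structural input is that the invariant form of $\psl_{2|2}$ restricts with opposite signs to the two $\sli_2$ summands, so that at level one the even affine subalgebra is the image of $V^{1}(\sli_2) \otimes V^{-1}(\sli_2)$; the target $\overline{\Omin(\sli_2)} \cong \C^2/\Z_2$ is precisely the two-dimensional nilpotent cone of a single $\sli_2$.

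First I would establish $X_{L_1(\psl_{2|2})} \subseteq \overline{\Omin(\sli_2)}$ by exhibiting generators of the maximal ideal of $V^1(\psl_{2|2})$ and computing their symbols in $S(\g_{\bar 0})$. Because $\psl_{2|2}$ is simple its invariant form is nondegenerate, so there are no weight-one singular vectors and hence no linear relations among the currents; the lowest relations are the symbols of the weight-two singular vectors, which are quadrics. I would compute this finite set of quadrics together with the Poisson (equivalently $SL_2 \times SL_2$-invariant) ideal they generate and identify its zero locus with an embedded copy of $\overline{\Omin(\sli_2)}$, the collapse of $\C^6$ onto a two-dimensional cone being governed by the interplay of the two opposite-level $\sli_2$ summands. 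Equivalently and more cleanly, I would use a free-field realisation of $L_1(\psl_{2|2})$ by symplectic bosons and free fermions (as in the $\mathfrak{psu}(1,1|2)_1$ literature): passing to $C_2$-algebras, the fermionic and ghost factors are lisse and contribute nothing to the reduced variety, while the bosonic currents descend to the components of the $U(1)$ moment map on $T^*\C^2$, realising $X_{L_1(\psl_{2|2})}$ inside the classical Hamiltonian reduction $\mu_{\C}^{-1}(0)/\!\!/ U(1) = \overline{\Omin(\sli_2)}$.

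For the reverse containment it suffices to show $\dim X_{L_1(\psl_{2|2})} \geq 2$, since $\overline{\Omin(\sli_2)}$ is irreducible and its only proper closed conical $SL_2$-invariant subvariety is the origin. I would obtain this either by checking that the Hilbert series of $R$ agrees with that of $\C[\overline{\Omin(\sli_2)}]$ through low conformal weight — confirming that the quadrics already found generate the whole ideal and that no further relations shrink the variety — or, via the free-field realisation, by verifying that the moment-map image is all of $\overline{\Omin(\sli_2)}$ so that the descended currents generate its full coordinate ring. Combining the two containments with irreducibility and the dimension count yields the asserted Poisson isomorphism $X_{L_1(\psl_{2|2})} \cong \overline{\Omin(\sli_2)}$.

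The main obstacle is the exact control of the maximal submodule of $V^1(\psl_{2|2})$, and hence of the radical of the defining ideal. This is sharpened by the vanishing dual Coxeter number $h^\vee(\psl_{2|2}) = 0$: there is no standard Sugawara conformal vector and the level-one representation theory is nonstandard, so one cannot simply import admissible-level results. Concretely, one must prove that the quadratic symbols together with their Poisson brackets already cut out the reduced cone $\overline{\Omin(\sli_2)}$ exactly, with no additional even relations coming from higher-weight odd singular vectors and no residual contribution from the nilpotent odd directions. Making the upper and lower bounds meet — that is, pinning down the radical ideal precisely — is the crux of the argument.
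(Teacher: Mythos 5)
Your upper-bound step is essentially the route the paper itself takes (note that although the statement is quoted from~\cite{beemFreeFieldRealisation2023}, the present paper's Section~5 argument is carried out for all $n>1$, so it covers $n=2$): one exhibits a degree-two singular vector of $V^1(\psl_{2|2})$ — in the paper, $\chi = E^{1,2n-1}_{(-1)}E^{1,2n}_{(-1)}\ket{0}$ specialised to $n=2$ — and shows that the submodule $U=\langle\chi\rangle$ it generates, which is automatically contained in the maximal submodule, produces in the reduced $C_2$-algebra both the symbol of $(E^{1,2}_{(-1)})^2\ket{0}$ (the singular vector of the $V^{1}(\sli_2)$ factor, collapsing that factor to a point) and the $2\times 2$ minor, i.e.\ the determinant, of the $V^{-1}(\sli_2)$ block, cutting that factor down to the nilpotent cone $\overline{\Omin(\sli_2)}$. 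Crucially, this requires only \emph{some} submodule inside the maximal one, not the maximal submodule itself; so the obstacle you single out as the crux — pinning down the radical ideal exactly — is precisely what the paper's strategy is designed to avoid, and your insistence on "generators of the maximal ideal" is a misdiagnosis rather than a necessity.

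The genuine gap is in your reverse containment: neither of your two proposals is a proof. Agreement of Hilbert series "through low conformal weight" establishes nothing about the variety (to make it rigorous you would need the full character of $L_1(\psl_{2|2})$, i.e.\ exactly the control of the maximal submodule you concede you lack). The free-field alternative is circular: the claim that one may pass to $C_2$-algebras through the BRST reduction and land inside the classical Hamiltonian reduction $\mu_{\mathbb{C}}^{-1}(0)/\!\!/\,\mathbb{C}^\times$ is precisely the "chiralisation commutes with reduction" principle (Conjecture~\ref{mainconjecture}) whose verification is the point of this whole circle of ideas; it cannot be assumed. The paper closes the argument with a one-line citation instead: by Gorelik--Kac, $L_1(\psl_{n|n})$ is not $C_2$-cofinite, hence $X_{L_1(\psl_{2|2})}\neq\lbrace pt\rbrace$; since by the upper bound it is a closed, conical, $SL_2$-invariant subvariety of $\overline{\Omin(\sli_2)}$, and the only proper such subvariety is the origin, equality follows. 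Note also that once invariance and closedness are in hand you only need $\dim X_{L_1(\psl_{2|2})}\geq 1$, not $\geq 2$. Replacing your lower-bound step by this non-lisseness citation repairs the argument and makes it coincide with the paper's.
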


Unfortunately, the technique used in ~\cite{beemFreeFieldRealisation2023} to prove Theorem \ref{bfproof} does not readily generalise for $n>2$. In this paper we manage to circumvent this issue and to prove
\begin{theorem}
Conjecture~\ref{mainconjecture2} holds for $n > 2$.
\end{theorem}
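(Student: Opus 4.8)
The plan is to establish the two parts of Conjecture~\ref{mainconjecture2} in turn for $n>2$: first the vertex-algebra isomorphism $V^A\cong L_1(\psl_{n|n})$, and then, using the free-field description it provides, the associated-variety identification $X_{L_1(\psl_{n|n})}\cong\overline{\Omin(\sli_n)}$. The latter is the genuinely new input for $n>2$, since the argument behind Theorem~\ref{bfproof} proceeds by an explicit determination of $C_2$-relations that becomes intractable once $n>2$.

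For part (1), I would first make Definition~\ref{def:VA} fully explicit: $V^A$ is the relative $U(1)$ BRST reduction of the VOA built from $n$ copies of the $\beta\gamma$ system (chiralising $T^*\C^n$) together with the boundary free fermions $b_i,c^i$. Inside the $U(1)$-invariant subalgebra I construct the affine currents of $\widehat{\psl}_{n|n}$: the even currents as the traceless bilinears $:\!\beta_i\gamma^j\!:$ and $:\!b_i c^j\!:$, generating $\sli_n\oplus\sli_n$, and the odd currents as the mixed bilinears $:\!c^i\beta_j\!:$ and $:\!\gamma^i b_j\!:$. A direct OPE computation shows these close into $\widehat{\psl}_{n|n}$ at level $1$: the opposite-sign contributions of the bosonic and fermionic sectors to the invariant form reproduce the supertrace form, and the two abelian currents are removed by the combination of the BRST reduction and the passage to the quotient $\psl$. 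This produces a nonzero homomorphism $\V\to V^A$ which is surjective because these currents strongly generate the invariant subalgebra; to upgrade it to an isomorphism I would show it factors through the simple quotient and is injective, either by establishing that the torus BRST reduction of a simple free-field VOA is itself simple, or by a graded-character comparison matching $V^A$ with $L_1(\psl_{n|n})$.

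For part (2), observe that $(\psl_{n|n})_{\bar{0}}\cong\sli_n\oplus\sli_n$, so $X_{L_1(\psl_{n|n})}$ is a conical, $(\mathrm{SL}_n\times\mathrm{SL}_n)$-invariant Poisson subvariety of $\sli_n\oplus\sli_n$. The main step is the upper bound $X_{L_1(\psl_{n|n})}\subseteq\overline{\Omin(\sli_n)}$, embedded in a single factor. I would obtain this from the free-field realisation of part (1): passing to Zhu's $C_2$-algebra is functorial and compatible with the BRST differential, so the realisation bounds $X_{L_1(\psl_{n|n})}$ by the associated variety of the reduced free-field system. Two features then pin down the image. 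First, the generators of the fermionic $\sli_n$ descend to bilinears in odd elements of the supercommutative $R$-algebra and are therefore nilpotent, so this factor does not contribute to the reduced variety. Second, within the surviving bosonic $\sli_n$ the images of the currents satisfy precisely the moment-map/rank-one relations defining $\overline{\Omin(\sli_n)}$, which is the classical symplectic reduction $\mu_{\mathbb{C}}^{-1}(0)/\!\!/U(1)$ of the $\beta\gamma$ phase space $T^*\C^n$.

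For the reverse inclusion and the conclusion, I note that $L_1(\psl_{n|n})$ is not lisse -- its $C_2$-algebra is infinite-dimensional, since $V^A$ is a nontrivial reduction -- so $X_{L_1(\psl_{n|n})}\neq\{0\}$, and being conical, invariant, and contained in the nilpotent cone it is a nonempty union of nilpotent orbit closures. Together with the upper bound, whose only proper closed conical invariant subset is $\{0\}$, this forces $X_{L_1(\psl_{n|n})}=\overline{\Omin(\sli_n)}$ with matching Poisson structures, and quasi-lisseness follows because $\overline{\Omin(\sli_n)}$ has finitely many symplectic leaves. The main obstacle -- and exactly what replaces the $n=2$ argument of Theorem~\ref{bfproof} -- is the upper bound: for $n>2$ one cannot enumerate singular vectors, and the delicate point is to control the functorial map on $C_2$-algebras so that taking associated varieties commutes with the BRST reduction (the principle ``chiralisation commutes with reduction''), ensuring that no spurious directions beyond $\overline{\Omin(\sli_n)}$ survive in $X$.
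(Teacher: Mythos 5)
Your proposal has a genuine gap at its central step, and it is exactly the step the paper's whole strategy is designed to avoid. For part (2) you derive the upper bound $X_{L_1(\psl_{n|n})}\subseteq\overline{\Omin(\sli_n)}$ by asserting that passing to Zhu's $C_2$-algebra is ``compatible with the BRST differential,'' so that the associated variety of the reduction is bounded by the classical symplectic reduction $\mu_{\mathbb{C}}^{-1}(0)/\!\!/ U(1)$ of $T^*\C^n$. But this is precisely the principle ``chiralisation commutes with reduction,'' which the paper states is a \emph{consequence} of Conjecture~\ref{mainconjecture}, not an available tool: there is no general theorem controlling $R_{H^{\frac{\infty}{2}+0}(\cdot)}$ in terms of the classical reduction of $R_V$, and you offer no argument for it beyond naming it as ``the delicate point.'' So your proof of the key inclusion is circular. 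The paper circumvents this entirely by working inside $V^1(\psl_{n|n})$ rather than inside the free-field system: it exhibits an explicit singular vector $\chi=E^{1,2n-1}_{(-1)}E^{1,2n}_{(-1)}\ket{0}$, shows the submodule $U=\langle\chi\rangle$ contains the (images of the) singular vectors $\chi_\pm$ of the subalgebras $V^{\pm1}(\sli_n)$ so that by Arakawa--Moreau the variety lands in $\{pt\}\times\overline{\Sh(\sli_n)}$ for $n>3$ (and in $\sli_n^*$ for $n=2,3$), and then constructs explicit vectors $u_{i,k,j,l}\in U$ whose images in the reduced $C_2$-algebra are \emph{all} $2\times2$ minors of $\sli_n$, cutting the sheet down to $\overline{\Omin(\sli_n)}$. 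Your closing argument (non-lisse plus the fact that the only nontrivial closed $SL_n$-invariant subvariety of $\overline{\Omin(\sli_n)}$ is itself) matches the paper's, though your justification of non-lisseness --- ``since $V^A$ is a nontrivial reduction'' --- is not a valid inference; the paper instead cites the known result of Gorelik--Kac.

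Part (1) also has a gap, though a less fundamental one. Constructing the $\widehat{\psl}_{n|n}$ currents as bilinears gives a map $V^1(\psl_{n|n})\to V^A$, but upgrading it to $V^A\cong L_1(\psl_{n|n})$ requires simplicity of $V^A$, and neither of your proposed routes is available: BRST reductions of simple VOAs are not known to be simple in general, and a character comparison with $L_1(\psl_{n|n})$ presupposes knowledge of the maximal submodule of $V^1(\psl_{n|n})$, which (as the paper emphasizes) is not understood --- indeed the paper only \emph{conjectures} that $U$ is maximal. The paper's route avoids simplicity questions altogether: it matches the decomposition of $V^A_\rho$ into singlet-algebra and Fock modules (Proposition~\ref{cats}, from \cite{ballin3dMirrorSymmetry2023}) with the known realisation of $L_1(\psl_{n|n})$ as a simple current extension of $L_{-1}(\sli_n)\otimes L_1(\sli_n)$ \cite{adamovicConformalEmbeddingsAffine2019,creutzigTensorCategoriesAffine2021}, decomposed via Theorems~\ref{level1simples} and~\ref{negativedecomp}, so both sides are manifestly the same direct sum of simple modules.
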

Together with Theorem~\ref{tudproof}, this shows how to extract the symplectic dual pair $\overline{\Omin(\sli_n)}$ and $A_{n-1}$ from $V^A \cong L_1 (\psl_{n|n})$. Furthermore, this implies the interesting corollary
\begin{corollary}
    For $n \geq 2$, $L_1(\psl_{n|n})$ is quasi-lisse.
\end{corollary}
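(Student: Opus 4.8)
The plan is to read off the corollary directly from the associated-variety computation, since quasi-lisseness is by definition a finiteness property of the symplectic foliation of the associated variety. Recall that, in the sense of Arakawa and Kawasaki, a finitely strongly generated vertex operator algebra $V$ is \emph{quasi-lisse} exactly when its associated variety $X_V = \operatorname{Specm}(R_V)$, equipped with the Poisson structure inherited from the Zhu $C_2$-algebra $R_V$, has only finitely many symplectic leaves. Thus everything reduces to exhibiting $X_{L_1(\psl_{n|n})}$ as a Poisson variety with finitely many leaves.

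First I would combine the two available cases into a single statement valid for all $n \geq 2$. For $n = 2$ this is Theorem~\ref{bfproof}, and for $n > 2$ it is part~(2) of Conjecture~\ref{mainconjecture2}, established in the theorem above; together they yield a Poisson isomorphism $X_{L_1(\psl_{n|n})} \cong \overline{\Omin(\sli_n)}$ for every $n \geq 2$. Because this isomorphism is one of Poisson varieties, it carries the symplectic foliation of the associated variety to the Kirillov--Kostant--Souriau foliation of the orbit closure, so it suffices to count the symplectic leaves of $\overline{\Omin(\sli_n)}$.

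Next I would invoke the classical structure of nilpotent orbit closures: the symplectic leaves of such a closure, with its KKS structure, are precisely the nilpotent orbits it contains, and a reductive Lie algebra has only finitely many nilpotent orbits. For the minimal orbit closure there are in fact exactly two leaves, namely $\Omin(\sli_n)$ itself and the origin $\{0\}$, since $\Omin$ is the unique nonzero orbit in its closure. Hence $X_{L_1(\psl_{n|n})}$ has finitely many symplectic leaves and $L_1(\psl_{n|n})$ is quasi-lisse.

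Since the substantive work is already encapsulated in the associated-variety theorems, no genuine obstacle remains; the only point requiring care is that those results must supply an isomorphism of \emph{Poisson} varieties, and not merely of reduced schemes, so that the leaf count transports faithfully. This is exactly what the proofs of Theorem~\ref{bfproof} and of the theorem above provide, so the argument goes through without further effort.
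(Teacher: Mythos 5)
Your proposal is correct and follows essentially the same route as the paper: identify $X_{L_1(\psl_{n|n})}$ with $\overline{\Omin(\sli_n)}$ (via Theorem~\ref{bfproof} for $n=2$ and the main theorem for $n>2$) and conclude from the fact that this orbit closure has finitely many symplectic leaves. Your extra care about the isomorphism being Poisson, and your explicit count of the two leaves $\Omin(\sli_n)$ and $\{0\}$, are sound elaborations of the same one-line argument the paper gives.
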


The proof strategy is as follows. A general expression for the BRST cohomology defining $V^A$ for abelian gauge theories in terms of free field realisations was given in~\cite{ballin3dMirrorSymmetry2023,beemFreeFieldRealisation2023}, and in~\cite{ballin3dMirrorSymmetry2023} this was recast in terms of simple current extensions. Combining this with results expressing $L_1(\psl_{n|n})$ as a simple current extension of $L_1(\sli_n)\otimes L_{-1}(\sli_n)$ \cite{adamovicConformalEmbeddingsAffine2019,creutzigTensorCategoriesAffine2021} makes proving the first statement of the conjecture relatively straightforward. Proving the second statement is at first glance quite challenging, as the submodule structure of $V^1(\psl_{n|n})$, and hence the structure of $L_1(\psl_{n|n})$, is not well-understood. However, we manage to find a singular vector generating a submodule $U$ of $V^1(\psl_{n|n})$ that contains the maximal submodule of the subalgebra $V^1(\sli_n)\otimes V^{-1}(\sli_n)$ for $n>3$, and of the left factor for all $n>1$. This implies (as all odd elements are nilpotent) that the associated variety is an $SL_n$-invariant subvariety of $\sli_n^*$, and in fact for $n>3$ by results due to Arakawa and Moreau a subvariety of the unique minimal sheet containing the desired minimal nilpotent orbit~\cite{arakawaSheetsAssociatedVarieties2019}. The sheet is a deformation of this nilpotent orbit as a symplectic singularity, which also contains elements that are semisimple elements of $\sli_n^*$. We then show that for all $n \geq 2$ the submodule $U$ contains elements that at the level of the reduced $C_2$-algebra produce all $2\times 2$ minors of $\sli_n$, including those that for $n>3$ eliminate the undesired elements of the sheet. Since $L_1(\psl_{n|n})$ is not lisse and the minimal nilpotent orbit closure does not contain any non-trivial $SL_n$-invariant subvarieties, the result follows. We conjecture that $U$ may correspond to the maximal submodule of $V^1(\psl_{n|n})$, however leave this to future work.

\subsection{Acknowledgements} It is our pleasure to thank Dr\v{a}zen Adamovi\'c, Christopher Beem, Thomas Creutzig, Tudor Dimofte, Anne Moreau, Wenjun Niu, David Ridout, Volker Schomerum, and especially Ben Webster for useful comments and discussions. The work of AEVF was supported in part by the EPSRC Grant EP/W020939/1 ``3d N=4 TQFTs". AS was supported by a MITACS Globalink Fellowship and thanks the University of Edinburgh for its hospitality. AS was also supported in part by Perimeter Institute for Theoretical Physics. Research at Perimeter Institute is supported in part by the Government of Canada through the Department of Innovation, Science and Economic Development Canada and by the Province of Ontario through the Ministry of Colleges and Universities.

\section{Preliminaries}

\subsection{Notation \& conventions}\label{notation}
Throughout this paper all vector spaces are over $\C$ and we use the following convention
\begin{align*}
    &&\mathbb{N}&=\lbrace 0,1,2,\dots \rbrace, &\mathbb{Z}^\pm&=\lbrace \pm 1,\pm 2\, \pm 3,\dots\rbrace.&
\end{align*}
We denote by $\g$ a finite-dimensional Lie superalgebra with superspace decomposition
\begin{align*}
    &&\g&=\g_{\bar{0}}\oplus\g_{\bar{1}}&
\end{align*}
and a triangular decomposition
\begin{align*}
    &&\g&=\mathfrak{n}_+\oplus \mathfrak{h}\oplus\mathfrak{n}_-.&
\end{align*}
We will denote by $G$ its associated Lie supergroup with $\mathrm{Lie}(G)=\g$ and by $\widehat{\g}$ the its affine Lie algebra. As a vector space this decomposes as
\begin{align*}
    &&\widehat{\g}&=\g(\!(t)\!)\oplus\C K\oplus \C T &
\end{align*}
where $K$ is central and $T$ acts as a derivation with respect to $t$. We will denote the triangular decomposition of $\widehat{\g}$ induced from the above triangular decomposition of $\g$ by
\begin{align*}
    &&\widehat{\g}&=\widehat{\mathfrak{n}}_+\oplus\widehat{\mathfrak{h}}\oplus\widehat{\mathfrak{n}}_-&
\end{align*}
where
\begin{align*}
    &&\widehat{\mathfrak{n}}_\pm&=\mathfrak{n}_\pm\oplus (t^{\pm1} \g[\![t^{\pm1}]\!]),& \widehat{\mathfrak{h}}&=\mathfrak{h}\oplus \C K\oplus \C T .&
\end{align*}
Throughout this paper, we take $n \in \mathbb{N}$ and when not otherwise stated $n>1$. Let $E^{i,j}\in\mathrm{Mat}_{n\times n}(\C)$ denote the elementary matrix with a $1$ in the $(i,j)$\textsuperscript{th} position and zeroes elsewhere. We will make use of the following notation
\begin{align*}
    &&\varepsilon_i:\mathfrak{h}&\to \C&\\
    &&E^{i,j}&\mapsto\begin{cases}
        1, &i=j \\
        0, &i\neq j
    \end{cases}
\end{align*}
as well as
\begin{align*}
    &&\varepsilon_{i,j}&=\varepsilon_i-\varepsilon_j, &\alpha_i&=\varepsilon_{i,i+1}.&
\end{align*}
We fix the following notation for the roots, positive roots, simple roots and simple coroots of $\sli_n$ respectively:
\begin{align*}
    &&\Delta&=\lbrace \varepsilon_{i,j}\mid 1\leq i\neq j\leq n\rbrace&\\
    &&\Delta_+&=\lbrace\varepsilon_{i,j}\mid 1\leq i<j\leq n\rbrace&\\
    &&\Delta_s&=\lbrace \alpha_i\mid 1\leq i\leq n-1\rbrace&\\
    &&\Delta_s^{\vee}&=\lbrace \alpha_i^\vee \mid 1\leq i\leq n-1\rbrace.&
\end{align*}
We will denote by $Q_n$ the root lattice of $\sli_n$ and the set of fundamental weights corresponding to our choice of simple roots by
\begin{align*}
    &&\Omega&=\lbrace \omega_i\mid 1\leq i\leq n-1\rbrace\subset\mathfrak{h}^*.&
\end{align*}
\subsection{Vertex algebras}
We refer the reader to \cite{frenkelVertexAlgebrasAlgebraic2004} for an introduction to the theory of vertex algebras. Given a vertex algebra $V$, we will denote by $\ket{0}$ its vacuum vector and by $T$ its translation operator. For $a\in V$, we use the following notation for the modes of the corresponding field $a(z)$:
\begin{align*}
    &&a(z)&=\sum_{j\in\Z} a_{(j)} z^{-j-1}.&
\end{align*}
We also follow the convention where a vertex \textit{operator} algebra, or VOA, specifically means a vertex algebra with a conformal structure.\\\\
For homogeneous elements 
\begin{align*}
    x=x^{j_1}_{(-k_1)}\dots x^{j_m}_{(-k_m)}\ket{0}\in V&
\end{align*}
we will refer to $\sum_{i=1}^m k_i $ as the \textit{degree} of $x$.
\subsubsection{Symplectic boson \& free fermion vertex algebras}
The symplectic boson vertex algebra is a chiral analogue of the Weyl algebra of differential operators and is an important building block for many chiral constructions.
\begin{definition}
  The \textbf{symplectic boson} vertex algebra, denoted $\mathbf{Sb^{\otimes n}}$, is strongly generated by fields $\beta^i(z)$ and $\gamma^i(z)$ for $1\leq i\leq n$ that satisfy the operator product expansion (OPE):
  \begin{align*}
      &&\beta^i(z)\gamma^j(w)&\sim \frac{\delta_{i,j}}{z-w}.&
  \end{align*}
\end{definition}
\begin{remark}
The symplectic boson vertex algebra is sometimes also called the $\beta\gamma$-system. It is also known as the chiral differential operators (CDOs) of $\mathbb{C}^n$, $\mathcal{D}_{ch}(\mathbb{C}^n)$.
\end{remark}
An odd analogue of the symplectic boson VOA is given by the free fermion VOA.
\begin{definition}
 The \textbf{free fermion} vertex superalgebra, denoted $\mathbf{Ff^{\otimes n}}$, is strongly generated by odd fields $b^i(z)$ and $c^i(z)$ for $1\leq i\leq n$ that satisfy the OPE:
 \begin{align*}
     &&b^i(z)c^j(w)&\sim \frac{\delta_{i,j}}{z-w}.&
 \end{align*}
\end{definition}
\subsubsection{Affine vertex algebras}
Given a finite-dimensional Lie superalgebra $\g$ we consider the following induced representation of its affinization $\widehat{g}_k$
\begin{align*}
    &&V^k(\g)&=\mathrm{Ind}_{\g[\![t]\!]\oplus\C K}^{\widehat{g}} \C_k&
\end{align*}
where $\C_k$ is the one-dimensional representation of $\g[\![t]\!]\oplus\C K$ on which $K$ acts as $k\in\C$. 
\begin{definition}
 Let $\lbrace J^a\rbrace_{i=1}^{\dim\g}$ be a basis for $\g$. Then the \textbf{universal affine VOA} is the vertex (super)algebra structure on $V^k(\g)$ strongly generated by fields $J^i(z)$ satisfying the OPE
 \begin{align*}
     &&J^i(z)J^j(w)&\sim\frac{[J^i,J^j](w)}{z-w}+\frac{k(J^i,J^j)}{(z-w)^2}&
 \end{align*}
 where $(\cdot,\cdot)$ is a non-degenerate bilinear form on $\g$.
\end{definition}
If the level $k$ is not critical (i.e. not the dual coxeter number of $\g$), then $V^k(\g)$ is a VOA. Furthermore, as a $\widehat{\g}$-representation, $V^k(\g)$ has a unique simple quotient, denoted $L_k(\g)$, which also has the structure of a VOA.
\begin{definition}
 The \textbf{simple affine VOA}, denoted $L_k(\g)$, is the vertex (super)algebra structure inherited by the simple quotient of $V^k(\g)$. 
\end{definition}
We can also consider more general highest weight modules and their simple quotients. Given a $\g$-weight $\lambda$, we denote the Verma module for $\widehat{\g}$ of highest weight $\lambda$ by $M(\g,\lambda)$ and we will denote its simple quotient by $L(\g,\lambda)$. The following definition is crucial to understanding the submodule structures of these modules:
\begin{definition}
 A vector $v\in V^k(\g)$ is called \textbf{singular} if for all $x\in\widehat{\mathfrak{n}}_+$
 \begin{align*}
     &&x\cdot v&=0.&
 \end{align*}
\end{definition}
\subsubsection{$V^1(\psl_{n|n})$}\label{pslsection}
In this subsection we review background relating to the Lie superalgebra $\psl_{n|n}$ and its VOA, which is the primary object of study for this paper.
\begin{definition}
For $n\geq 2$, the projective special linear algebra \textbf{$\mathfrak{psl}_{n|n}$} is the simple Lie superalgebra of type $A(n-1,n-1)$. This algebra can be described by the set of matrices $X\in \mathrm{End}(\C^{n|n})$ such that
\begin{align*}
    &&\mathrm{str} X&=0,&
\end{align*}
quotiented by the ideal generated by the identity matrix.
\end{definition}
To highlight the subalgebra structure of this Lie algebra, we write a representative of $X\in \psl_{n|n}$ as
\begin{align*}
 X&=\left(\begin{array}{ccc|ccc}
         X_{1,1} & \dots & X_{1,n}  & X_{1,n+1}&\dots & X_{1,2n}  \\
         \vdots &\ddots & \vdots & \vdots &\ddots & \vdots \\
         X_{n,1} & \dots & X_{n,n}  & X_{n,n+1}&\dots & X_{n,2n} \\ \hline 
         X_{n+1,1} & \dots & X_{n+1,n} & X_{n+1,n+1}  &\dots & X_{n+1,2n} \\
         \vdots & \ddots & \vdots &\vdots & \ddots & \vdots \\
         X_{2n,1} & \dots & X_{2n,n} & X_{2n,n+1}  &\dots & X_{2n,2n}
    \end{array}\right).
\end{align*}
The two diagonal blocks form commuting $\sli_n$ subalgebras in $\psl_{n|n}$. In $V^1(\psl_{n|n})$, the generators related to the bottom-right block form a $V^{-1}(\sli_n)$-subalgebra, while the ones related to the top-left block form a $V^1(\sli_n)$-subalgebra. The off-diagonal blocks are odd.

Let now $E^{i,j}$ denote the elementary supermatrix acting on $\mathbb{C}^{n|n}$ with a 1 in the $(i,j)$\textsuperscript{th} entry and zeroes elsewhere. Following the conventions set in \cite{frappatDictionaryLieSuperalgebras1996}, we choose simple roots analogously to those chosen for $\sli_n$ in Section \ref{notation}:
\begin{align*}
    &&\Delta_s'&=\lbrace \alpha_i'\mid 1\leq i\leq 2n-1\rbrace.&
\end{align*}
Note that each of the $\alpha_i'$ are even except for $\alpha_n'$, which is odd.

The Chevalley generators corresponding to the root data are given by
\begin{align*}
    &&e_{i,j}&=E^{i,j}, &f_{i,j}&=E^{j,i}&
\end{align*}
for $1\leq i< j\leq 2n$ and
\begin{align*}
    &&h_i&=E^{i,i}-E^{i+1,i+1}&
\end{align*}
for $1\leq i\leq n-1$ and $n+1\leq i\leq 2n-1$. These generators span $\psl_{n|n}$. We will generically denote elements of this set of generators by $E^\alpha$, where $\alpha$ represents the appropriate single or double index. The strong generators of $V^1(\mathfrak{psl}_{n|n})$ corresponding to these Chevalley generators $E^\alpha(z)$ satisfy the OPE:
\begin{align*}
    &&E^\alpha(z)E^\beta(w)&\sim \frac{[E^\alpha,E^\beta](w)}{z-w}+\frac{\mathrm{str}(E^\alpha E^\beta)\mathbf{1}}{(z-w)^2}.&
\end{align*}
For convenience, we introduce the following notation where for $1\leq j< i\leq 2n$:
\begin{align*}
    &&D^{i,j}&=[E^{i,j},E^{j,i}]=\begin{cases}
   \frac{1}{n}\sum\limits_{\substack{ k=1 \\ k\neq j}}^n H^{j,k}+\frac{1}{n}\sum\limits_{\substack{ k=n+1 \\ k\neq i}}^{2n} H^{i,k}, & \text{if}\; j\leq n\; \text{and}\; i> n \\
        H^{i,j} & \text{if}\; i\leq n \\
        H^{j,i} & \text{if}\; j>n
    \end{cases}
\end{align*}
and
\begin{align*}
        &&H^{i,j}&=E^{i,i}-E^{j,j}.&
\end{align*}
\subsubsection{Heisenberg \& Lattice vertex algebras \& $L_1(\mathfrak{sl}_n)$}\label{latticesection}
Let $\mathfrak{H}$ denote the level 1 Heisenberg Lie algebra with basis $\lbrace J_{(j)}\rbrace_{j\in\Z}$ and let $\mathfrak{H}_+\subset\mathfrak{H}$ denote the subalgebra generated by $\lbrace J_{(j)}\rbrace_{j\in\mathbb{N}}$. $\mathfrak{H}$ has a vacuum representation
\begin{align*}
    &&\mathcal{H}&=\mathrm{Ind}_{\mathfrak{H}_+}^{\mathfrak{H}}\C&
\end{align*}
where $\C$ indicates the 1-dimensional representation of $\mathfrak{H}_+$. We can generalise this construction to higher rank cases as well. Let $L$ be a lattice of rank $r$ with basis
\begin{align*}
    &&L &=\Z\phi^1\oplus\dots\oplus\Z\phi^r&
\end{align*}
and equipped with a symmetric bilinear form
\begin{align*}
    &&B_L:L\times L&\to \C.&
\end{align*}
From this data we define the rank $r$ Heisenberg algebra
\begin{align*}
    &&\mathfrak{h}^L&=\C\otimes_\Z L&
\end{align*}
and consider its central extension of with respect to $B_L$:
\begin{align*}
    0\to\C\to\mathfrak{H}^L\to \mathfrak{h}^L(\!(t)\!)\to 0.
\end{align*}
This extended algebra has a vacuum representation
\begin{align*}
    &&\mathcal{H}^L&=\mathrm{Ind}_{\mathfrak{H}^L_+}^{\mathfrak{H}^L}\C&
\end{align*}
where as before $\mathfrak{H}^L_+\subset\mathfrak{H}^L$ is the subalgebra of non-negative modes and $\C$ denotes the 1-dimensional representation of $\mathfrak{H}^L_+$.

\begin{definition}
The rank $r$ \textbf{Heisenberg VOA} associated to the lattice $L$ is the VOA structure on $\mathcal{H}^L$ strongly generated by fields $J^{\phi , i }(z)$ for $1\leq i\leq r$ and satisfying the OPE
\begin{align*}
    &&J^{\phi,i}(z)J^{\phi,j}(w)&\sim \frac{B_L(\phi^i,\phi^j)}{(z-w)^2}.&
\end{align*}
\end{definition}
All irreducible representations of $\mathcal{H}^L$ are in the following class of modules.
\begin{definition}
Given $\mu\in \mathfrak{h}^L$ the \textbf{Fock module}, denoted $F^\phi_\mu$, for $\mathcal{H}^L$ of highest weight $\mu$ is given by 
\begin{align*}
    &&F^L_\mu&=\mathrm{Ind}_{\mathfrak{H}^L_+}^{\mathfrak{H}^L}\C_\mu&
\end{align*}
where $\C_\mu$ is the representation of $\mathfrak{H}^L_+$ such that $J^{\phi,i}_{(0)}$ acts by $B_L( \mu, \phi^i)=\mu_i$ and $J^{\phi,i}_{(m)}$ acts by zero for all $m>0$.
\end{definition}
If the basis $\lbrace \phi^1,\dots,\phi^r\rbrace$ of $L$ is orthogonal, one can produce a decomposition
\begin{align*}
    &&\mathcal{H}^L&=\bigotimes_{i=1}^r \mathcal{H}^{\phi^i},& F^L_\mu&=\bigotimes_{i=1}^r F^{\phi^i}_{\mu_i}.&
\end{align*}
With these modules, we can define another important class of vertex algebras. Set
\begin{align*}
    &&V^L&=\bigoplus_{\mu\in L} F_\mu^L.&
\end{align*}
\begin{definition}
The \textbf{lattice VOA} associated to an integral lattice $L$ is the vertex (super)algebra structure on $V^L$ strongly generated by fields $J^{\phi , i}(z)$ and $\exp(\phi^i)(z)$ for $1\leq i\leq r$ which satisfy
  \begin{align*}
      &&J^{\phi , i}(z)J^{\phi , j}(w)&\sim \frac{B_L(\phi^i,\phi^j)}{(z-w)^2},& \exp(\phi^i)(z)\exp(\phi^j)(w)&\sim \frac{:\exp(\phi^i+\phi^j):(w)}{(z-w)^{B_L(\phi^i,\phi^j)}}&
\end{align*}
and
\begin{align*}
    &&T\exp(\phi^i)(z)&=:\phi^i\exp(\phi^i):(z).&
  \end{align*}
\end{definition}
Note that if $L$ is even, then $V^L$ is a vertex algebra.\\\\
Let $L^\vee=\lbrace \mu\in L\otimes\mathbb{Q}\mid B_L(\mu,\nu)\in\Z,\;\forall \nu\in L\rbrace$ be the dual lattice of $L$. Then for each $\lambda\in L^\vee/L$ the space
\begin{align*}
    &&V^L_\lambda&=\bigoplus_{\mu\in\lambda+L} F^L_\mu&
\end{align*}
has the structure of a $V^L$-module. If $L$ is positive definite, then these are simple modules and all simple $V^L$ -modules are of this form \cite{frenkelVertexAlgebrasAlgebraic2004}.\\\\
With this established, we can now state some results that will be employed to identify the BRST cohomology used to define the 3d A-model boundary VOA of relevance in this paper:
\begin{theorem}[\mbox{\cite{frenkelBasicRepresentationsAffine1980,segalUnitaryRepresentationsInfinite1981}}]\label{positivedecomp}
 If $\g$ is a simply laced Lie algebra then there are VOA isomorphisms
 \begin{align*}
     L_1(\g)\cong V^{L},&
 \end{align*}
 where $L$ is the root lattice for $\g$.
\end{theorem}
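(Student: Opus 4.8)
The plan is to realise the level-$1$ affine currents of $\widehat{\g}$ explicitly as fields on the lattice VOA $V^L$, and then to upgrade the resulting homomorphism to an isomorphism using simplicity. Because $\g$ is simply laced I normalise the invariant form so that $(\alpha,\alpha)=2$ for every root $\alpha$; the root lattice $L$ is then even and positive definite, and I equip it with $B_L$ equal to the restriction of this form, taking $\{\phi^i\}_{i=1}^r$ to be a basis of simple roots so that $\mathfrak{h}\cong\mathfrak{h}^L$. Under this identification the Cartan currents are realised by the Heisenberg fields $J^{\phi,i}(z)$, and to each root $\alpha\in\Delta\subset L$ I assign the lattice vertex operator $\exp(\alpha)(z)$. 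The normalisation $(\alpha,\alpha)=2$ is precisely what is needed here: it forces $\exp(\alpha)(z)$ to be a field of conformal weight $1$, matching the weight of the affine current $e_\alpha(z)$, and this is where the simply-laced hypothesis enters in an essential way.

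Next I would check that these fields satisfy the defining OPEs of $V^1(\g)$ at level $1$. The mixed OPE follows directly from the definition of $V^L$, giving $J^{\phi,i}(z)\exp(\alpha)(w)\sim (\alpha,\phi^i)\exp(\alpha)(w)/(z-w)$ and hence $[h,e_\alpha]=\alpha(h)e_\alpha$. The essential computation is the OPE of $\exp(\alpha)(z)$ with $\exp(\beta)(w)$, whose order of singularity is $-(\alpha,\beta)$. In the simply-laced setting only three cases arise: if $(\alpha,\beta)\ge 0$ and $\alpha+\beta\notin\Delta$ the OPE is regular, matching $[e_\alpha,e_\beta]=0$; if $(\alpha,\beta)=-1$, so that $\alpha+\beta\in\Delta$, there is a simple pole with residue proportional to $\exp(\alpha+\beta)(w)=e_{\alpha+\beta}(w)$; and if $\beta=-\alpha$, so that $(\alpha,\beta)=-2$, there is a double pole whose coefficients reproduce the Cartan current $\sum_i\alpha_i J^{\phi,i}(w)$ in the simple pole and the level-$1$ central term in the double pole. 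Matching these against the universal OPE of $V^1(\g)$ shows that the assignment extends, by the universal property of $V^1(\g)$, to a vertex algebra homomorphism $V^1(\g)\to V^L$.

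The main obstacle is a sign subtlety hidden in the second computation above. The naive vertex operators do not reproduce the antisymmetric Chevalley brackets, because the operators attached to $\alpha$ and $\beta$ commute or anticommute according to the parity of $(\alpha,\beta)$ rather than yielding $[e_\alpha,e_\beta]=-[e_\beta,e_\alpha]$ with the correct structure constants. The standard remedy, which is the technical heart of the Frenkel--Kac--Segal construction, is to twist the group algebra of $L$ by a $2$-cocycle $\varepsilon:L\times L\to\{\pm 1\}$ satisfying $\varepsilon(\alpha,\beta)/\varepsilon(\beta,\alpha)=(-1)^{(\alpha,\beta)}$ (equivalently, to pass to a central extension $\widehat{L}$ of $L$ by $\{\pm 1\}$), and to define $\exp(\alpha)(z)$ relative to this twist. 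One must check that such a cocycle exists (it does, since the required bimultiplicative sign can always be realised on a finitely generated free abelian group) and that the twisted currents close into $\widehat{\g}$ with exactly the structure constants of $\g$; this bookkeeping, rather than the OPE computations themselves, is the delicate part.

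Finally I would promote the homomorphism to an isomorphism. The vacuum $\ket{0}\in F^L_0$ is a weight-zero singular vector, and the assigned currents include $\exp(\alpha)(z)$ for every root $\alpha$; since the roots span $L$, these together with the Heisenberg fields strongly generate $V^L$, so the homomorphism is surjective. As $L$ is positive definite, $V^L$ is a simple VOA, so the surjection factors through the unique simple quotient $L_1(\g)$ of $V^1(\g)$ and induces an isomorphism $L_1(\g)\cong V^L$. It remains only to match conformal structures: a short computation identifies the image of the Sugawara vector of $L_1(\g)$ with the standard rank-$r$ Heisenberg conformal vector of $V^L$ determined by $B_L$, so the isomorphism is one of VOAs, as claimed.
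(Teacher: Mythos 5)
Your proposal is correct, and it is precisely the Frenkel--Kac--Segal vertex-operator construction that the paper invokes by citation for this theorem (the paper gives no proof of its own, deferring to \cite{frenkelBasicRepresentationsAffine1980,segalUnitaryRepresentationsInfinite1981}). The ingredients you assemble --- weight-one lattice vertex operators attached to roots, the $2$-cocycle twist fixing the signs of the structure constants, surjectivity from the roots generating $L$, and simplicity of $V^L$ for a positive-definite even lattice to identify the image with $L_1(\g)$ --- are exactly those of that classical argument.
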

We will be concerned with the case where $\g=\sli_n$ and $L=Q_n$. If we let $Q_n^\vee$ denote the dual lattice to $Q_n$ and note that $Q_n^\vee/Q_n\cong \Z_{n}$ we can describe the simple modules for $L_1(\sli_n)$ as follows.
\begin{theorem}[\mbox{\cite{dongVertexAlgebrasAssociated1993}}]\label{level1simples}
 The simple representations of $V^{Q_n}\cong L_1(\sli_n)$ are given by either
 \begin{align*}
     &&L_1(\sli_n,\omega_j)&\cong \bigoplus_{\mu\in \omega_j+Q_n} F^{Q_n}_\mu&
 \end{align*}
 where $\omega_j$ for $1\leq j\leq n-1$ are fundamental weights for $\sli_n$, or $L_1(\sli_n,0)=L_1(\sli_n)$.
\end{theorem}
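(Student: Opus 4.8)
The plan is to derive the classification by combining the general description of simple modules over a positive-definite lattice VOA recorded immediately above with the Frenkel--Kac--Segal realisation underlying the isomorphism $V^{Q_n}\cong L_1(\sli_n)$ of Theorem~\ref{positivedecomp}. First I would note that $Q_n$ is even and positive definite, so the cited classification applies verbatim: the simple $V^{Q_n}$-modules are exactly the spaces $V^{Q_n}_\lambda=\bigoplus_{\mu\in\lambda+Q_n}F^{Q_n}_\mu$ indexed by the finite group $Q_n^\vee/Q_n\cong\Z_n$. It then remains only to (a) choose convenient representatives of these $n$ cosets and (b) identify each resulting module with an affine module of the form $L_1(\sli_n,\omega_j)$.

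For (a) I would use that $Q_n^\vee$ is the weight lattice of $\sli_n$ and that the quotient $Q_n^\vee/Q_n\cong\Z_n$ is cyclic of order $n$. The class of $\omega_j$ in $Q_n^\vee/Q_n$ is $j$, so $\{0,\omega_1,\dots,\omega_{n-1}\}$ is a complete and irredundant set of coset representatives; this already reproduces the list in the statement, with the trivial class $0$ giving back $V^{Q_n}=L_1(\sli_n)$ itself.

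For (b) I would upgrade the isomorphism $V^{Q_n}\cong L_1(\sli_n)$ to the level of modules. Under the vertex operator construction the affine Cartan currents act through the Heisenberg fields $J^{\phi,i}$ while the affine root currents act through the lattice vertex operators $\exp(\alpha)$ for $\alpha\in\Delta$. On $V^{Q_n}_{\omega_j}$ the distinguished Fock vacuum in $F^{Q_n}_{\omega_j}$ of minimal conformal weight is annihilated by $\widehat{\mathfrak{n}}_+$ and carries $\sli_n$-weight $\omega_j$ at level $1$, so it is a singular vector generating a highest-weight module which, by the simplicity established in (a), must coincide with $L_1(\sli_n,\omega_j)$. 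The main obstacle is precisely this step: one must verify that the affine action furnished by the lattice vertex operators really has highest weight $\omega_j$ and level $1$, and that the coset module is the simple integrable module rather than some larger highest-weight object. This is the computation carried out in the Frenkel--Kac and Segal construction, so in practice I would either invoke it directly or reproduce the weight calculation, checking that the chosen Fock vacuum is cyclic and singular.
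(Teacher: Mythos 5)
Your proposal is correct and follows essentially the same route as the paper, which does not prove this statement itself but imports it from Dong's classification of modules over even lattice vertex algebras, combined with the ingredients the paper records just above: the simplicity and exhaustiveness of the coset modules $V^{Q_n}_\lambda$ for a positive-definite lattice, indexed by $Q_n^\vee/Q_n\cong\Z_n$, and the Frenkel--Kac--Segal isomorphism $V^{Q_n}\cong L_1(\sli_n)$ of Theorem~\ref{positivedecomp}. Your step (b) — verifying that the Fock vacuum of $F^{Q_n}_{\omega_j}$ is a level-$1$ singular vector of weight $\omega_j$, so that simplicity forces $V^{Q_n}_{\omega_j}\cong L_1(\sli_n,\omega_j)$ — is exactly the standard identification carried out in the cited literature, and cyclicity is automatic from the simplicity established in (a).
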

\subsubsection{Singlet vertex algebra \& $L_{-1}(\mathfrak{sl}_n)$}\label{singletsection}
One final vertex algebra we will need is the $p=2$ \textbf{singlet algebra}, which we denote by $M$ and is also known as the $W_3$-algebra with central charge $c=-2$. We will not explicitly define it here, however the interested reader can refer to Section 2.3 of \cite{ballin3dMirrorSymmetry2023} for a review. In \cite{wangClassificationIrreducibleModules1998} all irreducible modules for the singlet algebra were classified and it was shown that given $\mu\in\C$ there exists an irreducible highest weight $M$-module which we will denote $M_\mu$.\\\\

The modules $M_\mu$ of the singlet algebra as well as lattice vertex algebras can be used to realise $L_{-1}(\mathfrak{sl}_n)$ as well as some of its modules. Let
\begin{align*}
    &&P(m)&=\lbrace \lambda=(\lambda_1,\dots,\lambda_n)\in\Z^n\mid \sum_{i=1}^n \lambda_i=m\rbrace.&
\end{align*}
Define a lattice
\begin{equation*}
    L_\varphi= \Z\varphi^1\oplus\dots\oplus\Z\varphi^n, \quad B_\varphi(\varphi^i,\varphi^j)=-\delta_{i,j},
\end{equation*}
and let $\mathcal{H}^{\varphi}$ be the respective rank-$r$ Heisenberg VOA as defined above. Set
\begin{align*}
    &&L_c=&-\mathbb{Z}(\varphi_1+\dots+\varphi_n)&
\end{align*}
and with $\Tilde{\varphi}_i = \varphi_i - \varphi_{i+1}$, $1\leq i < n$ define
\begin{equation*}
    L_{\tilde{\varphi}} = \mathbb{Z}\Tilde{\varphi}_{1} \oplus \dots \oplus \mathbb{Z}\Tilde{\varphi}_{n-1}. 
\end{equation*}
Note that if we identify $L_\varphi$ with the lattice associated with the Cartan subalgebra of $\mathfrak{gl}_n$, $L_{\tilde{\varphi}}$ corresponds to the lattice for the Cartan of the $\mathfrak{sl}_n$ subalgebra.\footnote{By Cartan subalgebra, we mean in this case the lattices above equipped with the negative of the Cartan matrix as a bilinear form.}
\begin{proposition}[\mbox{\cite{adamovicVertexAlgebrasRelated2018}}]\label{decompiso}
There is an isomorphism of VOAs
\begin{align*}
    &&\mathcal{H}^{{\varphi}}&\cong \mathcal{H}^{{\Tilde{\varphi}}}\otimes\mathcal{H}^{c}.&
\end{align*}
\end{proposition}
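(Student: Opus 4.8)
The statement to prove is Proposition~\ref{decompiso}, asserting the VOA isomorphism
\[
\mathcal{H}^{\varphi} \cong \mathcal{H}^{\tilde\varphi} \otimes \mathcal{H}^c.
\]
The plan is to exhibit an explicit change of basis of the rank-$n$ lattice $L_\varphi$ that splits it orthogonally into the sublattice $L_{\tilde\varphi}$ spanned by the simple-root-type vectors $\tilde\varphi_i = \varphi_i - \varphi_{i+1}$ and the rank-one sublattice $L_c$ spanned by the "trace direction" $\varphi_1 + \dots + \varphi_n$, and then invoke the general fact that for an orthogonal direct sum of Heisenberg data the associated Heisenberg VOA factorizes as a tensor product. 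The latter fact is precisely the orthogonal-decomposition statement recorded just before the definition of the Fock modules, namely $\mathcal{H}^L \cong \bigotimes_i \mathcal{H}^{\phi^i}$ when the generating vectors are pairwise orthogonal; here I would use the multi-dimensional version that groups the orthogonal blocks into $\mathcal{H}^{\tilde\varphi} \otimes \mathcal{H}^c$.

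Concretely, first I would verify the orthogonality at the level of the bilinear form. With $B_\varphi(\varphi^i,\varphi^j) = -\delta_{i,j}$, one computes
\[
B_\varphi\bigl(\tilde\varphi_i,\, \varphi_1 + \dots + \varphi_n\bigr)
= B_\varphi(\varphi_i - \varphi_{i+1},\, \textstyle\sum_k \varphi_k)
= -1 - (-1) = 0
\]
for every $1 \le i < n$, so the span of the $\tilde\varphi_i$ is orthogonal to the trace direction. Next I would check that these vectors actually give a basis: the $n-1$ vectors $\tilde\varphi_i$ together with $\varphi_1 + \dots + \varphi_n$ are $\mathbb{Q}$-linearly independent and span $L_\varphi \otimes \mathbb{Q}$, which is what is needed for the Heisenberg (as opposed to lattice) VOA, since $\mathcal{H}^\varphi$ depends only on the vector space $\mathfrak{h}^{L_\varphi} = \mathbb{C} \otimes_\mathbb{Z} L_\varphi$ with its bilinear form, not on the integral structure. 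This is the key simplification: unlike the lattice VOA $V^{L}$, the Heisenberg VOA is insensitive to whether $\{\tilde\varphi_i\} \cup \{\varphi_1+\dots+\varphi_n\}$ generates $L_\varphi$ over $\mathbb{Z}$, so I only need a rational orthogonal splitting.

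Having the orthogonal splitting $\mathfrak{h}^{L_\varphi} = \bigl(\mathbb{C}\otimes L_{\tilde\varphi}\bigr) \oplus \bigl(\mathbb{C}\otimes L_c\bigr)$, I would construct the isomorphism by matching strong generators: the Heisenberg fields $J^{\tilde\varphi_i}(z)$ and $J^{c}(z)$ built from the new orthogonal basis generate, respectively, the two tensor factors $\mathcal{H}^{\tilde\varphi}$ and $\mathcal{H}^c$, and because of orthogonality the fields of one factor have trivial OPE with those of the other. Since all the generators are of Heisenberg type with OPEs determined entirely by the bilinear form, the assignment extending $J^{\tilde\varphi_i} \mapsto J^{\tilde\varphi_i}\otimes \mathbf{1}$ and $J^c \mapsto \mathbf{1} \otimes J^c$ defines a vertex algebra homomorphism, and comparing graded characters (or noting it is a bijection on the PBW-type spanning sets of modes acting on the vacuum) shows it is an isomorphism. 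The main obstacle, such as it is, lies in keeping the sign conventions of the negative-definite form $B_\varphi$ consistent and confirming that no subtle integrality issue intervenes; but because we work at the Heisenberg level rather than the lattice level, the argument reduces to the elementary linear-algebra fact that an inner-product space decomposes as the orthogonal sum of a subspace and its complement, and I expect no serious difficulty.
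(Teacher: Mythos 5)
Your proposal is correct, but there is nothing in the paper to compare it against: Proposition~\ref{decompiso} is imported from the cited reference \cite{adamovicVertexAlgebrasRelated2018} without any internal proof, so your argument is a genuine (and welcome) addition rather than a variant of the paper's. Your route is the standard one and it is sound: the orthogonality computation $B_\varphi\bigl(\tilde\varphi_i,\varphi_1+\dots+\varphi_n\bigr)=0$, the check that the $n-1$ vectors $\tilde\varphi_i$ together with the trace vector are linearly independent and hence span $\mathbb{C}\otimes_{\mathbb{Z}}L_\varphi$, and then the tensor factorization of a Heisenberg VOA over an orthogonal direct-sum decomposition of the underlying complex quadratic space, which is the multi-block version of the orthogonal-basis factorization $\mathcal{H}^L\cong\bigotimes_i\mathcal{H}^{\phi^i}$ recorded in Section~\ref{latticesection}. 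Most importantly, you isolated the one point where a careless argument would fail: $L_{\tilde\varphi}\oplus L_c$ is a finite-index (index $n$) sublattice of $L_\varphi$, not all of it, so this is \emph{not} a decomposition of integral lattices; the proof goes through only because $\mathcal{H}^L$ depends solely on $\mathfrak{h}^L=\mathbb{C}\otimes_{\mathbb{Z}}L$ with its bilinear form, so a rational orthogonal splitting suffices. That distinction is exactly the one the paper exploits elsewhere: at the lattice/module level the index-$n$ discrepancy is what produces the direct sums over coset classes appearing in Theorem~\ref{negativedecomp} and Proposition~\ref{cats}, whereas at the Heisenberg level it is invisible, which is what makes your isomorphism unconditional.
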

Given a weight $\lambda\in P(m)$, we can perform an orthogonal decomposition with respect to this isomorphism. In particular the corresponding $\mathcal{H}^{c}$ weight is given by
\begin{align*}
    &&\lambda_0&=\frac{1}{n}(\lambda_1+\dots+\lambda_n)&
\end{align*}
and the corresponding $\mathcal{H}^{{\tilde{\varphi}}}$ weight is given by\footnote{If we replace $\lambda_i-\lambda_{i+1}$ with the simple roots of $\sli_n$, this formula corresponds to a change of basis formula from simple roots to fundamental weights \cite[Table 1]{humphreysIntroductionLieAlgebras1972}.}
\begin{align*}
    &&\lambda_k^\vee=\alpha_k^\vee(\lambda)=\frac{1}{n}\sum_{i=1}^k i(n-k)&(\lambda_i-\lambda_{i+1})+\frac{1}{n}\sum_{i=k+1}^{n-1} k(n-i)(\lambda_i-\lambda_{i+1})&
\end{align*}
for $1\leq k\leq n-1$. Given this, we reformulate \cite[Theorem 4.4]{adamovicVertexAlgebrasRelated2018} as follows
\begin{theorem}[\mbox{{\cite{adamovicVertexAlgebrasRelated2018}}}]\label{negativedecomp}
There is an isomorphism of VOAs
\begin{align*}
    &&L_{-1}(\sli_n)&\cong \bigoplus_{\lambda\in P(0)} M_{\lambda_1}\otimes\dots\otimes M_{\lambda_n}\otimes F^{\tilde{\varphi}}_{\lambda^\vee},&
\end{align*}
where $F_{\lambda^\vee}^{\Tilde{\varphi}}$ is $F_\lambda^{\varphi}$ viewed as a module for $\mathcal{H}^{L_{\Tilde{\varphi}}}$. Similarly we have
\begin{align*}
    &&L(\sli_n,\lambda(m))&\cong \bigoplus_{\lambda\in P(m)} M_{\lambda_1}\otimes\dots\otimes M_{\lambda_n}\otimes F^{\tilde{\varphi}}_{\lambda^\vee}&
\end{align*}
where
\begin{align*}
    &&\lambda(m)&=\begin{cases}
        m\omega_1, & j\geq 0 \\
        m\omega_{n-1}, & j<0
    \end{cases}&
\end{align*}
and all irreducible  ordinary $L_{-1}(\sli_n)$ modules are of the form $L(\sli_n,\lambda(m))$ for some $m\in\Z$.
\end{theorem}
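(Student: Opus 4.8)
The plan is to deduce the decomposition from the free-field (symplectic boson) realisation of $L_{-1}(\sli_n)$ and to compare two different ways of splitting the $\beta\gamma$-system $\mathbf{Sb}^{\otimes n}$. Recall that $\mathbf{Sb}^{\otimes n}$ carries an action of the affine algebra at level $-1$, with currents $e_{ij}\mapsto\, :\!\beta^i\gamma^j\!:$ realising a copy of $\mathfrak{gl}_n$; the trace part is the Heisenberg field $J=\sum_i :\!\beta^i\gamma^i\!:$ generating a copy of $\mathcal{H}^c$, and the traceless part generates $L_{-1}(\sli_n)$. The zero mode $J_{(0)}$ grades $\mathbf{Sb}^{\otimes n}$ by total charge $m$, which the $\sli_n$-currents preserve. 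First I would invoke complete reducibility of $\mathbf{Sb}^{\otimes n}$ as an $L_{-1}(\sli_n)\otimes\mathcal{H}^c$-module, the charge-$m$ sector being the irreducible $L(\sli_n,\lambda(m))\otimes F^c_{m/n}$, with highest-weight vector $(\beta^1)^m\ket{0}$ of weight $m\omega_1$ for $m\geq 0$ and its $\gamma$-analogue of weight $m\omega_{n-1}$ for $m<0$. This yields the first decomposition
\[
\mathbf{Sb}^{\otimes n}\;\cong\;\bigoplus_{m\in\Z} L(\sli_n,\lambda(m))\otimes F^c_{m/n}.
\]

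Second I would decompose the same VOA pair-by-pair. The rank-one relationship between a single symplectic boson and the $p=2$ singlet algebra \cite{adamovicVertexAlgebrasRelated2018} gives $\mathbf{Sb}\cong\bigoplus_{k\in\Z} M_k\otimes F^\varphi_k$, graded by $(:\!\beta\gamma\!:)_{(0)}$; tensoring $n$ copies produces $\mathbf{Sb}^{\otimes n}\cong\bigoplus_{\lambda\in\Z^n}\big(\textstyle\bigotimes_{i=1}^n M_{\lambda_i}\big)\otimes F^\varphi_\lambda$. Now I would apply Proposition~\ref{decompiso}: the orthogonal splitting $\mathcal{H}^\varphi\cong\mathcal{H}^{\tilde\varphi}\otimes\mathcal{H}^c$ factorises each Fock module as $F^\varphi_\lambda\cong F^{\tilde\varphi}_{\lambda^\vee}\otimes F^c_{\lambda_0}$, with $\lambda_0=\tfrac1n\sum_i\lambda_i$ and $\lambda^\vee$ the $\sli_n$-Cartan weight given by the stated change-of-basis formula. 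Grouping the sum by total charge $m=\sum_i\lambda_i$ (so that $\lambda\in P(m)$ and $\lambda_0=m/n$) gives
\[
\mathbf{Sb}^{\otimes n}\;\cong\;\bigoplus_{m\in\Z}\Big(\bigoplus_{\lambda\in P(m)}\textstyle\bigotimes_{i=1}^n M_{\lambda_i}\otimes F^{\tilde\varphi}_{\lambda^\vee}\Big)\otimes F^c_{m/n}.
\]

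Finally I would match the two decompositions. Both are gradings by the same Heisenberg $\mathcal{H}^c$, and in each the Fock module $F^c_{m/n}$ occurs exactly once in charge $m$; since distinct $m$ give non-isomorphic $\mathcal{H}^c$-Fock modules, I can compare $F^c_{m/n}$-isotypic components and cancel the common tensor factor to obtain
\[
L(\sli_n,\lambda(m))\;\cong\;\bigoplus_{\lambda\in P(m)}\textstyle\bigotimes_{i=1}^n M_{\lambda_i}\otimes F^{\tilde\varphi}_{\lambda^\vee}.
\]
For $m=0$ this is the claimed VOA isomorphism for $L_{-1}(\sli_n)$, since $F^c_0=\mathcal{H}^c$ decouples as a tensor factor; for $m\neq 0$ it is the module statement. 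The exhaustion of the ordinary irreducibles is a separate input, which I would read off from the representation theory of $L_{-1}(\sli_n)$ (its Zhu algebra being a Joseph-ideal quotient of $U(\sli_n)$, whose finite-dimensional irreducibles are exactly the $S^m(\C^n)$ and their duals).

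The main obstacle is the first decomposition: one must prove that $\mathbf{Sb}^{\otimes n}$ is genuinely completely reducible over $L_{-1}(\sli_n)\otimes\mathcal{H}^c$ and that the traceless currents generate the \emph{simple} VOA $L_{-1}(\sli_n)$ rather than the universal $V^{-1}(\sli_n)$, together with the identification of each charge sector with the irreducible $L(\sli_n,\lambda(m))$. This is where one genuinely uses special features of level $-1$ — the singular vector/minimal-nilpotent (Joseph ideal) structure and the known dual-pair behaviour of the $\beta\gamma$-system — whereas everything downstream is bookkeeping with Fock gradings.
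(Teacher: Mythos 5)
Your proposal cannot be compared against an internal proof, because the paper does not prove Theorem~\ref{negativedecomp}: it is imported verbatim (up to notation for weights and Fock modules) from \cite[Theorem 4.4]{adamovicVertexAlgebrasRelated2018}. Measured against how the result is actually established in that literature, your outline is essentially the right reconstruction: realise the level $-1$ currents as $:\!\beta^i\gamma^j\!:$ inside $\mathbf{Sb}^{\otimes n}$, decompose $\mathbf{Sb}^{\otimes n}$ once over the dual pair $L_{-1}(\sli_n)\otimes\mathcal{H}^c$ (each charge sector irreducible, of highest weight $\lambda(m)$), once as the $n$-fold tensor of the rank-one singlet decomposition refined by the orthogonal splitting of Proposition~\ref{decompiso}, and then match $\mathcal{H}^c$-multiplicity spaces charge by charge; the $m=0$ statement is a genuine VOA isomorphism because both sides are the same subspace, namely the commutant of $\mathcal{H}^c$ in $\mathbf{Sb}^{\otimes n}$, with its inherited vertex algebra structure. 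You also correctly isolate the two load-bearing inputs (the Kac--Radul-type dual-pair decomposition with simplicity of the image of the traceless currents, and the rank-one $\mathbf{Sb}\cong\bigoplus_k M_k\otimes F^\varphi_k$), and correctly flag the first as the place where level $-1$ is genuinely used. The ``cancellation'' of $F^c_{m/n}$ should be phrased as comparison of multiplicity spaces (legitimate since Fock modules are simple and pairwise non-isomorphic), but that is cosmetic.

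There is, however, one concrete error: your proposed shortcut for the exhaustion statement. The Zhu algebra of $L_{-1}(\sli_n)$ is \emph{not} a quotient of $U(\sli_n)$ by a Joseph ideal. Indeed, it admits infinitely many pairwise non-isomorphic finite-dimensional simple modules (the top spaces $S^m(\C^n)$ and their duals, which have distinct central characters), whereas a quotient of $U(\sli_n)$ by a single primitive ideal supports only one central character; equivalently, for $n>3$ the relevant graded ideal cuts out the sheet closure $\overline{\Sh(\sli_n)}$ of Theorem~\ref{assvarsheet}, which strictly contains $\overline{\Omin(\sli_n)}$, while a Joseph-type ideal would cut out $\overline{\Omin(\sli_n)}$ itself. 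So the classification of ordinary irreducibles cannot be ``read off'' from Joseph-ideal theory; it requires either a citation to where it is proven (\cite{adamovicVertexAlgebrasRelated2018} and the Adamovi\'c--Per\v{s}e line of work) or an honest computation of the kernel of $U(\sli_n)\twoheadrightarrow A(L_{-1}(\sli_n))$ from the image of the level $-1$ singular vector, followed by determining which dominant integral weights are annihilated by it. Since the exhaustion claim is part of the statement being proven, this step cannot simply be deferred as ``representation theory of $L_{-1}(\sli_n)$'' without circularity.
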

\subsubsection{Associated varieties}
The associated variety of a vertex algebra $V$ was first defined in \cite{arakawaRemarkC_2Cofiniteness2010} and provides information about the structure and representation theory of the vertex algebra. Aspects of this construction were later generalised to the super setting in other work including \cite{liRemarksAssociatedVarieties2020}. As mentioned in the introduction, the associated varieties of VOAs appearing in certain 3d A-models and 4d quantum field theories are conjectured to be, and in some cases proven to be \cite{arakawaChiralAlgebrasClass2018}, the Higgs branches of those theories. The motivation for this paper is to prove such a conjecture for the 3d A-model of SQED[$n$].
\begin{definition}
Let $V$ be a strongly finitely generated vertex algebra with strong generators $\lbrace a^i,\dots,a^r\rbrace$ and consider the subspace
\begin{align*}
    &&C_2(V)&=\mathrm{span}_\C\lbrace a^i_{(-n)}v\mid 1\leq i\leq r,\; n\geq 2,\; v\in V\rbrace.&
\end{align*}
The \textbf{Zhu} $\mathbf{C_2}$\textbf{-algebra} of $V$ is defined to be
\begin{align*}
    &&R_V&= V/C_2(V)&
\end{align*}
and has a commutative product given by
\begin{align*}
    &&\overline{a}\cdot \overline{b}&=\overline{a_{(-1)}b},& a,b\in V.&
\end{align*}
The \textbf{associated variety} of $V$ is defined as
\begin{align*}
    &&X_V&=\mathrm{specm} R_V .&
\end{align*}
\end{definition}
The Zhu $C_2$-algebra $R_V$ is finitely generated if and only if $V$ is strongly finitely generated and has the structure of a Poisson algebra whose bracket is given by
\begin{align*}
    &&\lbrace\cdot,\cdot\rbrace:R_V\times R_V&\to R_V&\\
    &&(\overline{x},\overline{y})&\mapsto \overline{x_{(0)}y}.&
\end{align*}
It follows that the associated variety of a strongly finitely generated vertex algebra is a Poisson variety of finite type. This construction also gives a good way of characterising how ``large" a VOA is:
\begin{definition}[\cite{zhuModularInvarianceCharacters1996},\cite{arakawaQuasilisseVertexAlgebras2017}]
A vertex algebra $V$ is called $\mathbf{C_2}$\textbf{-cofinite} or \textbf{lisse} if $C_2(V)<\infty$, or equivalently if $\dim X_V=0$. We say $V$ is \textbf{quasi-lisse} if $X_V$ has finitely many symplectic leaves.
\end{definition}
The lisse condition is widely used throughout the study of VOAs and has many important consequences. For instance, lisse VOAs have finitely many irreducible representations and their characters satisfy a modular invariance property \cite{zhuModularInvarianceCharacters1996}. The quasi-lisse condition is a weaker version of this property and is of interest since VOAs constructed from supersymmetric QFTs, such as those appearing in this paper, are typically expected to satisfy this condition. Quasi-lisse VOAs also enjoy certain properties, for example their characters are known to satisfy modular linear differential equations \cite{arakawaQuasilisseVertexAlgebras2017}.

\begin{example}\label{ex:affineassoc}
Consider the associated variety of the universal affine Kac-Moody VOA $V^k(\g)$. Following \cite{arakawaSheetsAssociatedVarieties2019,liRemarksAssociatedVarieties2020}, the Zhu $C_2$-algebra of this VOA is given by
\begin{align*}
&&C_2(V^k(\g))&=t^{-2}\g[t^{-1}]V^k(\g)&
\end{align*}
so that 
\begin{align*}
&&R_{V^k(\g)}&=\mathbb{C}[\g^*].&
\end{align*}
Since expressions involving odd elements are nilpotent, they should not contribute to the associated variety. To emphasise this we consider the reduced algebra and variety
\begin{align*}
    &&(R_{V^k(\g)})_{\mathrm{red}}&=\mathbb{C}[\mathfrak{g}_{\bar{0}}^*].&\\
    &&X_{V^k(\g)}&=\g_{\bar{0}}^*.&
\end{align*}
Consider now a quotient $V=V^k(\g)/U$ by some submodule $U$. We then have a surjective algebra morphism
\begin{align*}
    &&\pi_V: \mathbb{C}[\g^*]&\twoheadrightarrow V/(t^{-2}\g[t^{-1}]V)&\\
    &&x&\mapsto \overline{x_{(-1)}}+t^{-2}\g[t^{-1}]V.&
\end{align*}
The associated variety of $V$ is the zero locus of $(\ker\pi_V)_{\mathrm{red}}$, which in turn implies that $X_V$ is a closed $G_{\bar{0}}$-invariant subvariety of $\g_{\bar{0}}^*$. We will make use the following lemma.
\begin{lemma}\label{lem:submodule}
Let $U^{'} \subseteq U$ be two sumbodules of $V^{k}(\g)$. Then for $V=V^{k}(\g)/U$ and $V^{'}=V^{k}(\g)/{U^{'}}$ we have $\ker\pi_{V'} \subseteq \ker \pi_{V}$. Denoting the quotient map to the reduced algebra
\begin{align*}
    &&\sigma: R_\V&\twoheadrightarrow (R_\V)_{\mathrm{red}}&
\end{align*}
and setting $I_V= \sigma(\ker_V)$, we have $I_{V'}\subseteq I_{V}$.
 \end{lemma}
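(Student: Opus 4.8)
The plan is to deduce both inclusions purely from the functoriality of the Zhu $C_2$-algebra under surjective vertex algebra homomorphisms. The crucial structural input is that, because $U' \subseteq U$, the quotient $V = V^k(\g)/U$ is itself a quotient of $V' = V^k(\g)/U'$. Writing $q\colon V^k(\g) \twoheadrightarrow V'$ and $q'\colon V^k(\g)\twoheadrightarrow V$ for the canonical projections, there is a surjective vertex algebra homomorphism $p\colon V' \twoheadrightarrow V$, namely the quotient by the submodule $U/U' \subseteq V'$, and these fit into a commuting triangle $q' = p\circ q$.

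First I would record that a surjective vertex algebra homomorphism $\phi\colon A \to B$ always descends to a surjection of Zhu $C_2$-algebras. Indeed $\phi(a_{(-m)}v) = \phi(a)_{(-m)}\phi(v)$ for all $m$, so $\phi(C_2(A)) \subseteq C_2(B)$ and $\phi$ induces a well-defined algebra map $\bar\phi\colon R_A \to R_B$; here $C_2(V) = t^{-2}\g[t^{-1}]V$ for each of the quotients, since each such $V$ is strongly generated by the images of the affine currents. Applying this to $q$, $q'$ and $p$, and using the identification $R_{V^k(\g)} = \mathbb{C}[\g^*]$ from Example~\ref{ex:affineassoc}, the maps $\pi_{V'}$ and $\pi_V$ are exactly the induced maps $\bar q$ and $\bar{q'}$.

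With this in hand the first inclusion is immediate. Checking on the algebra generators $x \in \g$ of $\mathbb{C}[\g^*]$, one has $\bar p(\pi_{V'}(x)) = \bar p(\overline{x_{(-1)}\ket{0}}) = \overline{p(x_{(-1)}\ket{0})} = \overline{x_{(-1)}\ket{0}} = \pi_V(x)$, since $p$ fixes the affine currents; as all maps involved are algebra homomorphisms this yields $\pi_V = \bar p \circ \pi_{V'}$ on all of $\mathbb{C}[\g^*]$. Hence $\pi_{V'}(x) = 0$ forces $\pi_V(x) = \bar p(0) = 0$, i.e. $\ker\pi_{V'} \subseteq \ker\pi_V$ as ideals of $\mathbb{C}[\g^*]$. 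For the reduced statement I would then simply apply $\sigma$: since the image of a set under a map preserves inclusions, $I_{V'} = \sigma(\ker\pi_{V'}) \subseteq \sigma(\ker\pi_V) = I_V$.

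The argument is entirely formal, so I do not expect a genuine obstacle; the only point demanding care is the compatibility of the $C_2$-subspaces with the projections, that is, verifying that $C_2(V)$ really is the image of $C_2(V^k(\g))$ so that the induced maps $\bar q, \bar{q'}, \bar p$ are well defined. This holds because these quotients are all strongly generated by the affine currents together with surjectivity of the projections, and it is the only place where the concrete form of these vertex algebras, rather than pure functoriality, enters.
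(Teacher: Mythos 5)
Your proof is correct, and it is essentially the paper's argument spelled out in detail: the paper simply asserts that the kernel inclusion "follows directly from the definition of $\pi_V$" and that the reduced statement follows by applying $\sigma$, which is exactly what your factorization $\pi_V = \bar p \circ \pi_{V'}$ through the intermediate quotient $p\colon V' \twoheadrightarrow V$ makes precise. The functoriality details you verify (compatibility of $C_2$-subspaces with the projections) are the implicit content of the paper's one-line proof, so there is no substantive difference in approach.
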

 \begin{proof}
The first statement follows directly from the definition of the map $\pi_V$, while the second follows directly from the first.
\end{proof}
In the following sections, we will recall results on varieties that appear as the associated varieties for $L_k(\sli_n)$ at certain levels.
\end{example}

\subsection{Sheets \& nilpotent orbits of Type A} 

\subsubsection{Nilpotent orbits of type A}\label{subsec:nil}  For a brief general introduction to the theory of nilpotent orbits we suggest consulting \cite{hendersonSingularitiesNilpotentOrbit2015}.  Briefly put, given a simple Lie algebra $\g$, one can define its nilpotent cone $\mathcal{N}(\g)$ which is a conic algebraic variety. The variety $\mathcal{N}(\g)$ is the disjoint union of finitely many $G$-orbits which are irreducible if $G$ is connected. These orbits have a partial ordering and the closure of a nilpotent orbit is given as the union of itself with all nilpotent orbits below it in the partial ordering. For the purposes of this paper, we are only interested in minimal non-zero nilpotent orbits of type A and their closures, which we now define explicitly.\\

\begin{definition}
The \textbf{nilpotent cone} of type A is
\begin{align*}
    &&\mathcal{N}(\sli_n)&=\lbrace Z\in\sli_n\mid \text{all eigenvalues of $Z$ are 0} \rbrace.
\end{align*}
The \textbf{minimal nilpotent orbit} of type A is
\begin{align*}
    &&\Omin(\sli_n)&=\lbrace Z\in\sli_n\mid \mathrm{rank}(Z)=1\rbrace.&
\end{align*}
\end{definition}
The closure of $\Omin(\sli_n)$ is found by taking its union with the trivial orbit:
\begin{align*}
    &&\overline{\Omin(\sli_n)}&=\lbrace Z\in\sli_n\mid \rank(Z)\leq 1\rbrace.&
\end{align*}
This condition is equivalent to requiring that the $2\times 2$ minors of any matrix in $\overline{\Omin(\sli_n)}$ vanish. This allows us to express the coordinate ring of $\overline{\Omin(\sli_n)}$ as follows:
\begin{align*}
    &&I=\lbrace Z \mapsto Z_{i,j}Z_{k,l}-Z_{i,l} Z_{k,j}\mid& Z\in\sli_n ,\;i<k,\;  j<l\rbrace\subset \mathbb{C}[\sli_n] &\\
    &\implies&\overline{\Omin(\sli_n)}&=\mathrm{Spec}(\mathbb{C}[\sli_n]/I).&
\end{align*}

\begin{remark}\label{rem:partition}
    More generally, nilpotent orbits of $\sli_n$ are labelled by partitions $\alpha$ of $n$ that encode the size of the Jordan blocks of their elements. In the case of the minimal nilpotent orbit $\alpha = (2,1,\dots ,1)$, which implies that its elements are as given in the above definition.
\end{remark}

\begin{remark}\label{rem:iden}
    As we have just discussed in Example~\ref{ex:affineassoc}, in the context of associated varieties of affine Kac-Moody algebras it is more natural to obtain $G$-invariant subvarieties of $\mathfrak{g}^*$ instead of $\g$. If we identify $\sli_n$ with $\sli_n^*$ with the bilinear form $(W,Z)= \frac{1}{2n}\trace (WZ)$, $\overline{\Omin (\sli_n)}$ can be expressed as $ \mathrm{Spec} (\mathrm{Sym}(\sli_n)/J)$ where $\mathrm{Sym}(\sli_n) \cong \mathbb{C}[\sli_n^*]$ and $ J =  \lbrace Z_{i,j}Z_{k,l}-Z_{i,l} Z_{k,j}\mid Z\in\sli_n ,\;i<k,\;  j<l\rbrace $.
\end{remark}

\subsubsection{Sheets of type A}\label{subsubsec:sheetsA} In order to connect our discussion to previous results in the literature, we need to introduce sheets. In the same way nilpotent orbits are irreducible subvarieties of the nilpotent cone, a sheet of a Lie algebra is an irreducible subvariety of the subset
\begin{align*}
    &&\g^{(m)}&=\lbrace x\in\g\mid \dim \g^x=m\rbrace,& m\in\mathbb{N},&
\end{align*}
where $\g^x$ denotes the centraliser of $x$ in $\g$. A key difference between sheets and nilpotent orbits is that sheets need not be contained in $\mathcal{N}(\g)$. For an introduction to the topic, we suggest consulting \cite[\S 39]{tauvelLieAlgebrasAlgebraic2005}.\\\\
Sheets of Lie algebras are closed, irreducible and $G$-invariant varieties that contain a unique nilpotent orbit. Since for our purposes we are only concerned with the closure of sheets of type A containing the minimal nilpotent orbit, we make use of the following result:
\begin{lemma}{\cite{arakawaSheetsAssociatedVarieties2019}}\label{sheetlemma}
Let $\levi\subseteq\sli_n$ be a Levi subalgebra such that its centraliser is one-dimensional:
\begin{align*}
    &&\mathfrak{z}(\levi)&=\C\xi,& \xi\in\mathfrak{h}\backslash\lbrace 0\rbrace.&
\end{align*}
Then the closure of the sheet associated to the Jordan class of $\levi$ is given by
 \begin{align*}
     &&\overline{\mathbb{S}_\levi(\sli_n)}&=SL_n\cdot(\C\xi+\mathfrak{p}_u)&
 \end{align*}
 where
 \begin{align*}
     &&\mathfrak{p}&=\levi\oplus\mathfrak{p}_u&
 \end{align*}
 is a parabolic subalgebra and $\mathfrak{p}_u$ is its nilradical.
\end{lemma}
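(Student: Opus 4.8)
The plan is to realise $\overline{\mathbb{S}_\levi(\sli_n)}$ as the closure of a single Jordan (decomposition) class and then to ``straighten'' that class onto the affine piece $\C\xi + \mathfrak{p}_u$. By definition the Jordan class of $\levi$ is the $SL_n$-saturation $J_\levi = SL_n\cdot \mathfrak{z}(\levi)_{\mathrm{reg}}$ of the regular part $\mathfrak{z}(\levi)_{\mathrm{reg}} = \{x\in\mathfrak{z}(\levi) : \g^{x} = \levi\}$ of the centre, with trivial nilpotent part, and a Jordan class is dense in the sheet it generates, so $\overline{\mathbb{S}_\levi(\sli_n)} = \overline{J_\levi}$. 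Since $\mathfrak{z}(\levi) = \C\xi$ is one-dimensional and $\g^{\xi} = \levi$ (which I may assume, as this is exactly the condition satisfied by the generator of the centre of a Levi with one-dimensional centre), the regular locus is simply $\C\xi\setminus\{0\}$, and the whole problem reduces to computing $\overline{SL_n\cdot(\C\xi\setminus\{0\})}$.

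First I would prove the key ``absorption'' statement: for every $\xi'\in\C\xi\setminus\{0\}$ one has $\xi' + \mathfrak{p}_u \subseteq SL_n\cdot \xi'$. Writing $\mathfrak{p} = \levi\oplus\mathfrak{p}_u$ with Levi subgroup $L$ and unipotent radical $P_u$, the element $\xi'$ is central in $\levi$, so $L$ fixes it and $\mathrm{ad}(\xi')$ preserves $\mathfrak{p}_u$; moreover $\mathrm{ad}(\xi')$ acts \emph{invertibly} on $\mathfrak{p}_u$, because every root occurring in $\mathfrak{p}_u$ pairs non-trivially with $\xi'$ (this is precisely the content of $\g^{\xi'} = \levi$). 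Consequently the orbit map $P_u \to \sli_n$, $g\mapsto \mathrm{Ad}(g)\xi'$, has image exactly $\xi' + \mathfrak{p}_u$: its differential at the identity is $v\mapsto [v,\xi'] = -\mathrm{ad}(\xi')v$, an isomorphism $\mathfrak{p}_u \xrightarrow{\sim}\mathfrak{p}_u$, and a dimension count then identifies $P_u\cdot\xi'$ with the affine space $\xi'+\mathfrak{p}_u$. Hence $\xi'+\mathfrak{p}_u = P_u\cdot\xi'\subseteq SL_n\cdot\xi'\subseteq J_\levi$, and in particular $SL_n\cdot\big((\C\xi\setminus\{0\}) + \mathfrak{p}_u\big) = SL_n\cdot(\C\xi\setminus\{0\}) = J_\levi$.

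It remains to pass to closures and to treat the fibre over $0$. For the inclusion $\overline{J_\levi}\subseteq SL_n\cdot(\C\xi+\mathfrak{p}_u)$ I would check that $SL_n\cdot(\C\xi+\mathfrak{p}_u)$ is closed: the affine subspace $\C\xi + \mathfrak{p}_u$ is $P$-stable (as $L$ fixes $\xi$ and preserves $\mathfrak{p}_u$, while $[\mathfrak{p}_u,\C\xi+\mathfrak{p}_u]\subseteq \mathfrak{p}_u$), so the collapsing map $SL_n\times_{P}(\C\xi+\mathfrak{p}_u)\to\sli_n$ is well defined and proper, $SL_n/P$ being projective; its image is therefore closed, irreducible (being the image of an irreducible total space of dimension $\dim\mathfrak{p}_u + \dim SL_n/P + 1$), and equal to $SL_n\cdot(\C\xi+\mathfrak{p}_u)$. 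This closed set contains $J_\levi$ by the previous paragraph, hence it contains $\overline{J_\levi} = \overline{\mathbb{S}_\levi(\sli_n)}$. For the reverse inclusion, any point $\xi' + u$ with $\xi'\neq 0$ lies in $J_\levi$, and letting $\xi'\to 0$ shows $u\in\overline{J_\levi}$ for every $u\in\mathfrak{p}_u$; since $\overline{J_\levi}$ is closed and $SL_n$-stable, this yields $SL_n\cdot(\C\xi+\mathfrak{p}_u)\subseteq\overline{J_\levi}$. The two inclusions give the claimed equality.

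The main obstacle is the closedness step: I expect the properness of the generalised Springer/collapsing map $SL_n\times_P(\C\xi+\mathfrak{p}_u)\to\sli_n$, together with the verification that its image contributes no spurious components beyond $\overline{J_\levi}$, to require the most care. The relevant dimension bookkeeping — the sheet has dimension $\dim\mathfrak{p}_u + \dim SL_n/P + 1$, exactly one more than the Richardson nilpotent orbit closure $SL_n\cdot\mathfrak{p}_u$ sitting over $0$ — is what guarantees that the nilpotent fibre is a genuine degeneration of the semisimple locus rather than an extra irreducible piece. The absorption lemma, by contrast, is elementary once the invertibility of $\mathrm{ad}(\xi')$ on $\mathfrak{p}_u$ has been recorded.
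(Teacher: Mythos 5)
The paper does not actually prove this lemma: it is imported verbatim from the cited reference (Arakawa--Moreau), where it rests on the classical theory of sheets and Jordan classes due to Borho and Borho--Kraft (see also Tauvel--Yu, \S 39). Your proposal reconstructs exactly that classical argument: the identification of the sheet closure with the closure of the Jordan class $SL_n\cdot(\C\xi\setminus\{0\})$ (legitimate here because the zero orbit is rigid, so this Jordan class is dense in its sheet), the absorption step $\xi'+\para_u=P_u\cdot\xi'$ via invertibility of $\mathrm{ad}(\xi')$ on $\para_u$, closedness of $SL_n\cdot(\C\xi+\para_u)$ via the proper collapsing map $SL_n\times_P(\C\xi+\para_u)\to\sli_n$, and the degeneration $c\xi+u\to u$ for the reverse inclusion. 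So the proposal is correct and follows the same route as the proof in the literature the paper relies on. One step deserves tightening: in the absorption lemma, the fact that the differential of the orbit map is an isomorphism only shows that $P_u\cdot\xi'$ is \emph{open} in the irreducible affine space $\xi'+\para_u$; an open dense orbit need not be everything, so a ``dimension count'' does not finish the argument. To conclude equality, either invoke the Kostant--Rosenlicht theorem (orbits of a unipotent group acting on an affine variety are closed), or argue constructively: given $\xi'+u$, use $\mathrm{ad}(\xi')^{-1}$ on $\para_u$ to choose $x$ with $[x,\xi']$ cancelling the lowest-degree part of $u$, and iterate; nilpotency of $\para_u$ makes this terminate. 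With that repair the proof is complete.
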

We are interested in the case $\overline{\Omin(\sli_n)}\subset \overline{\Sh(\sli_n)}$. As mentioned in Remark~\ref{rem:partition}, the minimal nilpotent orbit of type A corresponds to the partition $\alpha=(2,1,\dots,1)$, and the relevant Levi subalgebra is determined by the conjugate partition $\overline{\alpha}=(n-1,1)$ \cite{arakawaSheetsAssociatedVarieties2019}. For completeness, we give an explicit description of elements of the sheet $\overline{\Sh(\sli_n)}$ in Appendix~\ref{app:B}. It is possible to distinguish between $2\times 2$ minors that vanish on the both $\overline{\Omin(\sli_n)}$ and $\overline{\Sh(\sli_n)}$, and those that only vanish on $\overline{\Omin(\sli_n)}$. We also do this in Appendix~\ref{app:B}.

\section{BRST cohomology \& 3d A-model boundary VOA}
In this section we will prove the statement made in \cite{beemFreeFieldRealisation2023} that the boundary VOA for the 3d A-model of SQED with $n\geq 3$ hypermultiplets is indeed $L_1(\psl_{n|n})$. The proof of this fact follows rather directly from the results of \cite{ballin3dMirrorSymmetry2023,creutzigTensorCategoriesAffine2021,adamovicConformalEmbeddingsAffine2019}.
We will briefly outline the data used to define the 3d A-model and boundary VOA for abelian gauge theories, but the interested reader may refer to \cite{costelloHiggsCoulombBranches2018} for more details on the physical construction.\\\\
The 3d A-model of an abelian gauge theory is determined by a gauge group $G=U(1)^r$ and a representation on $V=T^*N$ where $N \cong \mathbb{C}^n$. The weights of the $G$-action on $N$, which define the representation, can be put in a $n \times r $ matrix that we denote by $\rho$. The matrix $\rho$ is the all that is needed to define the boundary VOA of an abelian gauge theory, and to ease the formulation of the definition we take it to be unimodular. For SQED[$n$], the case of interest in this paper, we have $V=T^*\mathbb{C}^n$, $r=1$ and
\begin{align*}
    &&\rho&=\left(\begin{array}{c}
         1 \\
         \vdots \\
         1
    \end{array} \right),&
\end{align*}
which means that $U(1)$ acts with unit weight on each copy of $\mathbb{C}$.\\\\

Consider the VOA $\mathrm{Sb}^{\otimes n}\otimes\mathrm{Ff}^{\otimes n}$. We can define $r$ currents
\begin{equation}
    J^{\rho, i} = \sum_{j=1}^n \rho_{i}^j (:\gamma^i \beta^i: + :b^i c^i: ). 
\end{equation}
These generate a level-zero $\widehat{\mathfrak{gl}_1^r}$ sub-VOA of $\mathrm{Sb}^{\otimes n}\otimes\mathrm{Ff}^{\otimes n}$. In particular, for SQED[$n$] the $\widehat{\mathfrak{gl}_1}$ sub-VOA is generated by the current
\begin{equation}
    J^{\rho} = \sum_{j=1}^n  (:\gamma^i \beta^i: + :b^i c^i: ). 
\end{equation}

\begin{definition}\label{def:VA}
The \textbf{boundary VOA} for the 3d A-model of an abelian gauge theory with weight matrix $\rho$ is defined as the following relative BRST cohomology~\cite{voronov1993semi}
\begin{align*}
    &&V^A_\rho&=H^{\frac{\infty}{2}+\bullet}(\mathfrak{gl}_1^r,\widehat{\mathfrak{gl}_1^r},\mathrm{Sb}^{\otimes n}\otimes\mathrm{Ff}^{\otimes n}),&&
\end{align*}
where the level-0 Heisenberg VOA $\widehat{\mathfrak{gl}_1^r}$ is defined as above.
\end{definition}
\vspace{0.2pt}

\begin{remark}
It is worth mentioning that physically the free fermions are included to cancel an anomaly. Mathematically, we can think of this VOA as being the global sections of the sheaf of chiral differential operators on the symplectic reduction of $V$ with respect to $G$. It was shown in \cite{gorbounovGerbesChiralDifferential1999} that there is often a topological obstruction to the construction of a sheaf of chiral differential operators corresponding to the gauge anomaly.\\\\
Other anomaly cancellations may be used, and this was done for the example in \cite{kuwabaraVertexAlgebrasAssociated2017}. Here we use the anomaly cancellation data preferred by physicists. As mentioned in the introduction, this is expected to lead to quasi-lisse VOAs that moreover manifest outer-automorphisms of physical significance \cite{beemFreeFieldRealisation2023}.
\end{remark}
It was shown in \cite{beemFreeFieldRealisation2023,ballin3dMirrorSymmetry2023} that for abelian gauge theories of this type, the cohomology defining $V^A_\rho$ is concentrated in cohomological degree 0. In \cite{ballin3dMirrorSymmetry2023} a decomposition for $V_\rho^A$ is given in terms of singlet algebra and Fock modules resulting from a lattice decomposition determined by $\rho$. Here we will look at this decomposition for the case of interest in this paper.
\begin{proposition}\label{cats}
For $r=1$ and
\begin{align*}
    &&\rho&=\left(\begin{array}{c}
         1 \\
         \vdots \\
         1
    \end{array} \right)&
\end{align*}
we have an isomorphism of vertex operator superalgebras
\begin{align*}
    &&V_\rho^A&\cong \bigoplus_{j=0}^{n-1}\bigoplus_{m\in\Z} \bigoplus_{\substack{ \mu\in \omega_j+Q_n,\\ \lambda\in P(-mn-j)}}  M_{\lambda_1}\otimes\dots\otimes M_{\lambda_n}\otimes F^{\tilde{\varphi}}_{\lambda^\vee}\otimes F^{Q_n}_\mu&
\end{align*}
where the expressions appearing on the RHS were introduced in Sections \ref{latticesection} and \ref{singletsection}.
\end{proposition}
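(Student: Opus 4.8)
The plan is to prove Proposition~\ref{cats} by starting from the definition of $V_\rho^A$ as a relative BRST cohomology and decomposing the input VOA $\mathrm{Sb}^{\otimes n}\otimes\mathrm{Ff}^{\otimes n}$ into eigenspaces for the zero mode $J^\rho_{(0)}$ of the level-zero current $J^\rho$. Since the cohomology is known (by~\cite{beemFreeFieldRealisation2023,ballin3dMirrorSymmetry2023}) to be concentrated in cohomological degree $0$, the relative BRST reduction with respect to the rank-one Heisenberg $\widehat{\mathfrak{gl}_1}$ should amount to extracting the kernel of $J^\rho_{(0)}$ modulo the image of the positive modes — concretely, the charge-zero subspace with respect to the $U(1)$-action encoded by $\rho$. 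First I would make this precise: identify the grading on $\mathrm{Sb}^{\otimes n}\otimes\mathrm{Ff}^{\otimes n}$ induced by $J^\rho_{(0)}$ and show the relative semi-infinite cohomology computes the charge-zero component of the free-field system after quotienting out the Heisenberg subalgebra generated by $J^\rho$.

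The heart of the argument is a free-field realisation. I would use the standard bosonisation of the symplectic boson and free fermion systems in terms of Heisenberg and lattice vertex algebras, sending $\mathrm{Sb}^{\otimes n}$ to a sum of Fock modules over a rank-$n$ lattice (the $\beta^i,\gamma^i$ contributing copies of the $p=2$ singlet algebra $M$ and lattice data) and $\mathrm{Ff}^{\otimes n}$ to lattice/fermionic pieces that combine into the $Q_n$-lattice factors. The current $J^\rho = \sum_i (:\gamma^i\beta^i: + :b^ic^i:)$ measures a total charge; its zero mode restricts the sum over the free-field lattice to those weights of total charge zero. Concretely, I would track how the singlet modules $M_{\lambda_i}$ arise from each $\beta^i\gamma^i$ pair (following the realisation underlying Theorem~\ref{negativedecomp}), how the fermions $b^i,c^i$ produce the Fock modules $F^{Q_n}_\mu$ over the $\sli_n$ root lattice (as in Theorem~\ref{positivedecomp} and Theorem~\ref{level1simples}), and how the $\mathcal{H}^\varphi \cong \mathcal{H}^{\tilde\varphi}\otimes\mathcal{H}^c$ splitting of Proposition~\ref{decompiso} separates the $\sli_n$-Cartan directions $F^{\tilde\varphi}_{\lambda^\vee}$ from the overall $U(1)$ direction that the BRST reduction eliminates.

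The combinatorial bookkeeping of the charge-zero constraint is where I expect the indexing of the direct sum to emerge. Setting $J^\rho$ to charge zero correlates the total weight of the bosonic Fock sector with that of the fermionic lattice sector: the fermionic part ranges over $\mu\in\omega_j + Q_n$ as $j$ runs over $0,\dots,n-1$ and $Q_n^\vee/Q_n\cong\Z_n$ labels the $L_1(\sli_n)$-cosets, while the bosonic part ranges over $\lambda\in P(-mn-j)$ with $\lambda^\vee$ its projection to the $\tilde\varphi$-lattice. I would verify that the total-charge-zero condition forces precisely the relation $\lambda\in P(-mn-j)$ for each $m\in\Z$, matching the residual freedom along the eliminated $\mathcal{H}^c$-direction parametrised by $m$. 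The free integer $m$ records winding along the reduced Heisenberg direction that survives as a discrete sum rather than a continuous Fock label once the zero mode is gauged away.

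\textbf{The main obstacle} will be verifying that the relative semi-infinite cohomology genuinely collapses to this charge-zero subspace with no higher-cohomology corrections and no additional identifications among the summands — that is, that taking $J^\rho_{(0)}$-invariants and passing to BRST cohomology commute cleanly with the bosonisation. I would lean on the degree-zero concentration result already established in~\cite{beemFreeFieldRealisation2023,ballin3dMirrorSymmetry2023}, so that the only task is to match the explicit free-field decomposition summand-by-summand against the claimed right-hand side; the remaining work is then the (routine but delicate) verification that the lattice $Q_n$ and its dual cosets, together with the singlet factors $M_{\lambda_i}$ and the $F^{\tilde\varphi}_{\lambda^\vee}$ weights computed via the formula for $\lambda^\vee$ in Section~\ref{singletsection}, assemble exactly as stated. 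Since this proposition is essentially a specialisation of the general abelian decomposition of~\cite{ballin3dMirrorSymmetry2023} to the SQED[$n$] data $\rho = (1,\dots,1)^{\mathsf T}$, I expect the proof to be short, invoking that decomposition and then substituting $r=1$ and the uniform weight vector to read off the triple sum over $j$, $m$, and the correlated weights $(\mu,\lambda)$.
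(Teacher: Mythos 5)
Your proposal is correct and follows essentially the same route as the paper: both reduce the statement to the general abelian decomposition (Proposition~3.2 of~\cite{ballin3dMirrorSymmetry2023}), specialise to $r=1$ and $\rho=(1,\dots,1)^{T}$, and then perform the lattice/charge bookkeeping identifying $H\cong\Z_n$, the cosets $\mu\in\omega_j+Q_n$, and the weights $\lambda\in P(-mn-j)$ with their projections $\lambda^\vee$. The only difference is cosmetic: your opening paragraphs sketch the BRST/bosonisation mechanics underlying that cited result, which the paper simply delegates to~\cite{ballin3dMirrorSymmetry2023}, while its actual proof consists of the explicit data you defer to the end, namely $\rho^\vee$, the orthogonal projections, and the identification of the lattices $L_{\theta^\perp}=Q_n$ and $L_{\eta^\perp}=L_{\tilde\varphi}$.
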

\begin{proof}
To prove this, we use Proposition 3.2 of \cite{ballin3dMirrorSymmetry2023}. The $n\times (n-1)$ matrix $\rho^\vee$ of that Proposition (the unimodular matrix associated to the mirror theory) is in our case
\begin{align*}
    &&\rho^\vee&=\left(\begin{array}{cccc}
        1 &  &  &   \\
        -1 & 1 &  &     \\
         & -1 & \ddots     & \\
       &  & \ddots    & 1 \\
        &  &   & -1
    \end{array}\right).&
\end{align*}
These matrices provide an orthogonal decomposition
\begin{align*}
    &&\C^n&=\rho(\C)\oplus\rho^\vee(\C^{n-1}).&
\end{align*}
The associated orthogonal projections are given by
\begin{align*}
    &&p:\C^n&\twoheadrightarrow \rho(\C)&\\
    &&\left(\begin{array}{c}
         v_1  \\
         \vdots \\
         v_n
    \end{array}\right)&\mapsto \frac{1}{n}\left(\begin{array}{c}
         v_1+\dots+v_n \\
         \vdots \\
         v_1+\dots+v_n
    \end{array}\right)&\\
    &&p^\perp:\C^n&\twoheadrightarrow \rho^\vee(\C^{n-1})&\\
    &&\left(\begin{array}{c}
         v_1  \\
         \vdots \\
         v_n
    \end{array}\right)&\mapsto \frac{1}{n}\left(\begin{array}{c}
         (n-1)v_1-v_2-\dots -v_n  \\
          -v_1+(n-1)v_2-v_3-\dots -v_n \\
          \vdots \\
          -v_1-\dots -v_{n-1}+(n-1)v_n
    \end{array} \right).&
\end{align*}
Using this data, we can compute the abelian group $H$ appearing in the decomposition from Proposition 3.2 of \cite{ballin3dMirrorSymmetry2023}:
\begin{align*}
    &&H&=N/\Z=N^\vee/\Z^{n-1}=\Z_n&
\end{align*}
where
\begin{align*}
    &&N&=\rho^{-1}(p(\Z^n))=\frac{1}{n} \Z, &N^\vee&=(\rho^\vee)^{-1}(p^\perp(\Z^n))=\frac{1}{n}\Z^{n-1}.&
\end{align*}
In particular, given a weight $\lambda=(\lambda_1,\dots,\lambda_n)\in\Z^n$ we get
\begin{align*}
    &&\lambda&=\rho(\lambda_0)+\rho^\vee(\lambda^\vee),& \lambda_0\in N,\; \lambda^\vee\in N^\vee&
\end{align*}
such that $[\lambda_0]=[\lambda^\vee]\in H$ where
\begin{align*}
    &&\lambda_0&=\frac{1}{n}(\lambda_1+\dots+\lambda_n)&\\
    &&\lambda_k^\vee=\frac{1}{n}\sum_{i=1}^k i(n-k)&(\lambda_i-\lambda_{i+1})+\frac{1}{n}\sum_{i=k+1}^{n-1} k(n-i)(\lambda_i-\lambda_{i+1})&
\end{align*}
which are the same formulae we encountered in Section \ref{singletsection}. Since we must have $\lambda\in P(-mn-j)\subset\Z^n$ for some $m\in\Z$ and $0\leq j\leq n-1$ and
\begin{align*}
    &&[\lambda_0]&=\sum_{i=1}^n \lambda_i \mod n&
\end{align*}
we can identify $j$ with the class $[\lambda_0]=[\lambda^\vee]$.\\\\
Letting $C_n$ be the Cartan matrix of $\sli_n$, we then define three lattices
\begin{align*}
&&L_{\theta^\perp}&=\Z\theta^\perp_1\oplus\dots\oplus\Z\theta_{n-1}^\perp&\\
    &&L_{\eta^\perp}&=\Z\eta^\perp_1\oplus\dots\oplus\Z\eta^\perp_{n-1}&\\
    &&L_{\phi}&=\Z\phi_1\oplus\dots\oplus\Z\phi_n&
\end{align*}
with bilinear forms
\begin{align*}
    B_{\theta^\perp}(\theta^\perp_i,\theta^\perp_j)&=-B_{\eta^\perp}(\eta^\perp_i,\eta^\perp_j)=C_n\\
    &B_\phi(\phi_i,\phi_j)=\delta_{i,j}.
\end{align*}
Proposition 3.2 of \cite{ballin3dMirrorSymmetry2023} tells us that we have a VOA isomorphism
\begin{align*}
    &&V_\rho^A&\cong \bigoplus_{\substack{\lambda_0\in N, \; \lambda^\vee,\nu^\vee\in N^\vee \\ [\lambda_0]=[\lambda^\vee]=[\nu^\vee]}} M^\phi_{(\rho(\lambda_0)+\rho^\vee(\lambda^\vee))\cdot\phi}\otimes F^{\eta^\perp}_{\lambda^\vee}\otimes F^{\theta^\perp}_{\nu^\vee}&
\end{align*}
Observing that $L_{\theta^\perp}=Q_n$ and $L_{\eta^{\perp}}=L_{\tilde{\varphi}}$ we can conclude that
\begin{align*}
    &&V_\rho^A&\cong \bigoplus_{j=0}^{n-1}\bigoplus_{m\in\Z}\bigoplus_{\substack{ \mu\in \omega_j+Q_n,\\ \lambda\in P(-mn-j)}} M_{\lambda_1}\otimes\dots\otimes M_{\lambda_n}\otimes F^{\Tilde{\varphi}}_{\lambda^\vee}\otimes F^{Q_n}_\mu.&
\end{align*}
\end{proof}
\begin{theorem}
For all $n\geq 3$, $r=1$ and $\rho = (1,\ldots ,1)^T$ an $1 \times n$ matrix as above the 3d A-model boundary VOA is
\begin{align*}
    &&V_\rho^A&\cong L_1(\psl_{n|n}).&
\end{align*}   
\end{theorem}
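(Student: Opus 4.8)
The plan is to take the explicit decomposition of $V_\rho^A$ furnished by Proposition~\ref{cats} and recognise it, summand by summand, as the decomposition of $L_1(\psl_{n|n})$ into modules for its affine subalgebra $L_1(\sli_n)\otimes L_{-1}(\sli_n)$. First I would fix the label $j$ and the integer $m$ and isolate the two groups of tensor factors. By Theorem~\ref{negativedecomp}, the inner sum $\bigoplus_{\lambda\in P(-mn-j)} M_{\lambda_1}\otimes\cdots\otimes M_{\lambda_n}\otimes F^{\tilde{\varphi}}_{\lambda^\vee}$ is precisely the ordinary $L_{-1}(\sli_n)$-module $L(\sli_n,\lambda(-mn-j))$, while by Theorem~\ref{level1simples} the sum $\bigoplus_{\mu\in\omega_j+Q_n} F^{Q_n}_\mu$ is the simple $L_1(\sli_n)$-module $L_1(\sli_n,\omega_j)$ (with the convention $\omega_0:=0$, so that $j=0$ gives the vacuum module). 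Since the $Q_n$-Fock factor depends only on $j$, this rewrites Proposition~\ref{cats} as
\begin{align*}
    V_\rho^A\cong\bigoplus_{j=0}^{n-1}\bigoplus_{m\in\Z} L_1(\sli_n,\omega_j)\otimes L(\sli_n,\lambda(-mn-j)),
\end{align*}
an isomorphism of modules over $L_1(\sli_n)\otimes L_{-1}(\sli_n)$.

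Next I would reparametrise the double sum. Writing $m'=-mn-j$, the pair $(j,m)$ with $0\le j\le n-1$ and $m\in\Z$ is in bijection with $m'\in\Z$, where $j\equiv -m'\pmod n$ records the class of $m'$ in $Q_n^\vee/Q_n\cong\Z_n$. The decomposition thus becomes a single sum over $m'\in\Z$ of the ordinary modules $L(\sli_n,\lambda(m'))$, each tensored with the $L_1(\sli_n)$-module whose label is dictated by the class of $m'$. This is exactly the shape of a simple current extension: the simple currents act on both factors simultaneously, coupling the $\Z_n$-grading of the $L_1(\sli_n)$-modules to the spectral flow orbit of the $L_{-1}(\sli_n)$-modules.

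I would then invoke~\cite{adamovicConformalEmbeddingsAffine2019,creutzigTensorCategoriesAffine2021}, which realise $L_1(\psl_{n|n})$ as the simple current extension of $L_1(\sli_n)\otimes L_{-1}(\sli_n)$ by precisely this $\Z_n$ of simple currents, with precisely the modules $L_1(\sli_n,\omega_j)\otimes L(\sli_n,\lambda(-mn-j))$ appearing. This matches the block structure recalled in Section~\ref{pslsection}, where the top-left and bottom-right diagonal blocks generate the $V^1(\sli_n)$ and $V^{-1}(\sli_n)$ subalgebras; the $\Z_2$-grading making $V_\rho^A$ a vertex superalgebra, inherited from the free fermions of $\mathrm{Ff}^{\otimes n}$, should then be identified with the even/odd decomposition $\psl_{n|n}=(\psl_{n|n})_{\bar{0}}\oplus(\psl_{n|n})_{\bar{1}}$.

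The main obstacle is upgrading this summand-by-summand identification from an isomorphism of $L_1(\sli_n)\otimes L_{-1}(\sli_n)$-modules to an isomorphism of vertex operator superalgebras. For this I would appeal to the uniqueness of simple current extensions: such an extension of a fixed VOA by a fixed finite abelian group of simple currents is determined up to isomorphism by the modules appearing together with their conformal weights. It therefore suffices to verify that $V_\rho^A$ is genuinely the extension realised by the same currents, that the conformal weights agree on the two sides, and that the vacuum vector, translation operator and parity match; the levels are automatically consistent, since the two diagonal $\sli_n$ blocks of $V^1(\psl_{n|n})$ carry levels $+1$ and $-1$, in agreement with the factors $L_1(\sli_n)$ and $L_{-1}(\sli_n)$ obtained above. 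Once these compatibilities are in place, the cited uniqueness yields the desired isomorphism $V_\rho^A\cong L_1(\psl_{n|n})$.
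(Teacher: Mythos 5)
Your proposal is correct and takes essentially the same approach as the paper: the paper likewise combines Proposition~\ref{cats}, Theorems~\ref{level1simples} and~\ref{negativedecomp}, and the simple current extension realisation of $L_1(\psl_{n|n})$ from \cite{adamovicConformalEmbeddingsAffine2019,creutzigTensorCategoriesAffine2021}, merely running the chain of identifications in the opposite direction (decomposing $L_1(\psl_{n|n})$ down to the singlet--Fock expression rather than resumming $V_\rho^A$ upward). Your explicit appeal to uniqueness of simple current extensions to upgrade the summand-by-summand match to a VOA isomorphism is a sound point that the paper leaves implicit; just note that the relevant group of simple currents is $\Z$ rather than the finite group $\Z_n$, since the sum over $m$ is infinite.
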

\begin{proof}
It was shown in \cite{adamovicConformalEmbeddingsAffine2019,creutzigTensorCategoriesAffine2021} that $L_1(\psl_{n|n})$ is a simple current extension of $L_{-1}(\sli_n)\otimes L_1(\sli_n)$:
\begin{align*}
    &&L_1(\psl_{n|n})&\cong \bigoplus_{j=0}^{n-1} \bigoplus_{m\in\Z}  L_{-1}(\sli_n,\lambda(-mn-j))\otimes L_1(\sli_n,\omega_j) &
\end{align*}
Applying Theorems \ref{level1simples} and \ref{negativedecomp} gives
\begin{align*}
    &&L_1(\psl_{n|n})&\cong \bigoplus_{j=0}^{n-1}\bigoplus_{m\in\Z} \bigoplus_{\substack{ \mu\in \omega_j+Q_n,\\\lambda\in P(-mn-j)}} M_{\lambda_1}\otimes\dots M_{\lambda_n}\otimes F^{\Tilde{\varphi }}_{\lambda^\vee}\otimes F^{Q_n}_{\mu}&
\end{align*}
which is the expression for $V_\rho^A$ given in Proposition \ref{cats}.
\end{proof}
\begin{remark}
    In~\cite{beemFreeFieldRealisation2023}, the above free field realisation was further related to the geometry of $\overline{\Omin (\sli_n)}$ and its resolutions. This was achieved by representing the singlet VOA modules appearing above as fixed-charged sectors of symplectic fermions, and by bosonising those to arrive at fields that could be interpreted as CDOs on openly embedded subsets of these varieties.
\end{remark}
\begin{remark}\label{rem:mirror}
    In~\cite{ballin3dMirrorSymmetry2023}, in the context of 3d mirror symmetry, it was proven that $V^A_\rho$ is isomorphic to yet another VOA $V^{B}_{\rho^\vee}$, the B-model VOA arising from the (mirror) theory defined by $\rho^{\vee}$. $V^B_{\rho^\vee}$ is also not obviously isomorphic to $L_1(\psl_{n|n})$. We leave the investigation of mirror symmetry to future work.
\end{remark}
\section{Singular vectors of $V^1(\mathfrak{psl}_{n|n})$}

To compute the associated variety of $L_1(\psl_{n|n})$, we first need to consider the maximal submodule of $V^1(\psl_{n|n})$. One way of approaching this task is to identify singular vectors, as these generate submodules.
\begin{proposition}\label{singularvectorprop}
The vector
\begin{align*}
    &&\chi&=E^{1,2n-1}_{(-1)}E^{1,2n}_{(-1)}\ket{0}&
\end{align*}
is singular in $V^1(\mathfrak{psl}_{n|n})$. The submodule generated by this vector $U=\langle\chi\rangle$ is a proper non-zero submodule of $V^1(\psl_{n|n})$.
\end{proposition}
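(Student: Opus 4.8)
The plan is to prove singularity by checking annihilation against a generating set of $\widehat{\mathfrak n}_+$, and then to read off properness and non-triviality from the conformal grading. Everything hinges on one structural observation: $E^{1,2n-1}$ and $E^{1,2n}$ are odd positive root vectors (of roots $\varepsilon_1-\varepsilon_{2n-1}$ and $\varepsilon_1-\varepsilon_{2n}$) with $[E^{1,2n-1},E^{1,2n}]=0$ and $\mathrm{str}(E^{1,2n-1}E^{1,2n})=0$. Hence the corresponding modes anticommute and square to zero; in particular $\big(E^{1,2n}_{(-1)}\big)^2\ket 0=0$, which I will invoke repeatedly. A second observation is that $E^{1,2n}$ is the highest root vector: $[E^{i,i+1},E^{1,2n}]=0$ for every simple root vector $E^{i,i+1}$, so $\theta=\varepsilon_1-\varepsilon_{2n}$ and the affine node of $\widehat{\mathfrak n}_+$ is $e_0=(E^{2n,1})_{(1)}$.

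Since $\widehat{\mathfrak n}_+$ is generated as a Lie superalgebra by the affine simple raising operators $e_0$ and $e_i=(E^{i,i+1})_{(0)}$ for $1\le i\le 2n-1$, and since a vector killed by $x,y\in\widehat{\mathfrak n}_+$ is automatically killed by their (super)bracket, it suffices to verify $e_i\chi=0$ for $0\le i\le 2n-1$. The mode-zero generators are handled by the Leibniz rule: because $E^{1,2n}$ is highest, each $e_i$ is inert on the factor $E^{1,2n}_{(-1)}$, while $[E^{i,i+1},E^{1,2n-1}]=0$ for $i\neq 2n-1$; for $i=2n-1$ the bracket equals $-E^{1,2n}$, so $e_{2n-1}\chi\propto E^{1,2n}_{(-1)}E^{1,2n}_{(-1)}\ket 0$, which vanishes by the nilpotency above. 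Thus $e_i\chi=0$ for all $1\le i\le 2n-1$.

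The substance of the argument is the affine generator $e_0=(E^{2n,1})_{(1)}$. I would move this odd mode through the two odd creation operators using $[a_{(m)},b_{(k)}]_{\pm}=([a,b])_{(m+k)}+m\,\delta_{m+k,0}\,\mathrm{str}(ab)$ at level one, carefully tracking Koszul signs. Two contributions arise: one in which $[E^{2n,1},E^{1,2n-1}]=E^{2n,2n-1}$ and then $[E^{2n,2n-1},E^{1,2n}]=-E^{1,2n-1}$, producing $-E^{1,2n-1}_{(-1)}\ket 0$; and one in which $[E^{2n,1},E^{1,2n}]$ lands in the Cartan as $D^{2n,1}$, which annihilates $\ket 0$, leaving only the central term $\mathrm{str}(E^{2n,1}E^{1,2n})=\mathrm{str}(E^{2n,2n})=-1$ and producing $+E^{1,2n-1}_{(-1)}\ket 0$. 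These cancel. I expect this to be the main obstacle, both for the sign and projection bookkeeping and because the cancellation is genuinely level-dependent: at level $k$ the two pieces sum to $(k-1)E^{1,2n-1}_{(-1)}\ket 0$, so $\chi$ is singular precisely at $k=1$.

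For the final claim, $\chi\neq 0$ follows from the PBW theorem for $U(t^{-1}\g[t^{-1}])$: the operators $E^{1,2n-1}_{(-1)}$ and $E^{1,2n}_{(-1)}$ are distinct odd creation operators, so their product is a nonzero basis monomial and $U=\langle\chi\rangle\neq 0$. Properness follows from the conformal grading. Since $\chi$ is singular and a weight vector for $\widehat{\mathfrak h}$, PBW gives $U=U(\widehat{\mathfrak n}_-)\chi$, and $\widehat{\mathfrak n}_-$ only preserves or raises conformal weight; as $\chi$ has conformal weight $2$, every nonzero vector of $U$ has conformal weight $\ge 2$. In particular $\ket 0\notin U$, and since $\ket 0$ generates $V^1(\psl_{n|n})$ the submodule $U$ is proper.
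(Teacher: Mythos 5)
Your proof is correct, and it reaches the conclusion by a genuinely different route than the paper. The paper proves singularity by brute force: it observes that only modes $E^{i,j}_{(m)}, H^{i,j}_{(m)}$ with $j=1$ or $i\in\{2n-1,2n\}$ can fail to (super)commute with the two creation operators, and then verifies annihilation of $\chi$ for every such family and every mode $m\geq 0$ in an appendix of explicit computations. You instead reduce the problem to finitely many checks via the structural fact that $\widehat{\mathfrak{n}}_+$ is generated as a Lie superalgebra by the affine Chevalley generators $e_0=E^{2n,1}_{(1)}$ and $e_i=E^{i,i+1}_{(0)}$, $1\leq i\leq 2n-1$, after which the zero-mode generators are dispatched by highest-root considerations together with $\bigl(E^{1,2n}_{(-1)}\bigr)^2\ket{0}=0$, and the single substantive computation is $e_0\chi=0$ — which indeed reproduces, at $m=1$, exactly the cancellation between $[E^{2n,2n-1},E^{1,2n}]=-E^{1,2n-1}$ and the central term $\mathrm{str}(E^{2n,1}E^{1,2n})=-1$ that appears in the paper's appendix. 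Your approach buys economy and a sharper conceptual point (the identity $e_0\chi=(k-1)E^{1,2n-1}_{(-1)}\ket{0}$ shows singularity holds precisely at level $1$), while the paper's buys self-containedness: the generation claim you invoke is standard for affinizations of simple Lie algebras but requires a word of justification in the super setting, since it rests on $\psl_{n|n}$ being simple with irreducible adjoint representation, so that $\mathrm{ad}\bigl(U(\mathfrak{n}_+)\bigr)E^{2n,1}=\g$ and $[\g,\g]=\g$ sweep out all of $t\g[t]$; note this would fail for $\sli_{n|n}$, whose adjoint representation is not irreducible, so the appeal to simplicity of $\psl_{n|n}$ (valid here for $n\geq 2$) should be made explicit. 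Your properness and non-vanishing arguments (super-PBW for $U(t^{-1}\g[t^{-1}])$, then $U=U(\widehat{\mathfrak{n}}_-)\chi$ sitting in degrees $\geq 2$ so $\ket{0}\notin U$) are an expanded version of the paper's one-line grading argument and are fine.
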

    \begin{proof}
To show that $\chi$ is singular, we must demonstrate that it is annihilated by $\widehat{\mathfrak{n}}_+$. Any non-negative degree mode will annihilate $\chi$ if it commutes with both of $E^{1,2n-1}_{(-1)}$ and $E^{1,2n}_{(-1)}$. Since the relevant commutation relation for these modes is
\begin{align*}
    &&[E^{i,j}_{(m)},E^{k,l}_{(-1)}]&=[E^{i,j},E^{k,l}]_{(m-1)}+m\delta_{m,1}\mathrm{str}(E^{i,j}E^{k,l})&
\end{align*}
we need only consider annihilation by operators $E_{(m)}^{i,j}, H^{i,j}_{(m)}\in\widehat{\mathfrak{n}}_+$ with $j=1$ or $i=2n-1,2n$. In Appendix~\ref{app:computationsforsingular}, we show that all of the relevant operators annihilate $\chi$ and so we conclude that $\chi$ is singular in $V^1(\psl_{n|n})$. Since $\chi\neq0$ and all positive modes annihilate $\chi$ which has degree 2, we can also conclude that $U\neq V^1(\psl_{n|n})$ and $U\neq 0$.
\end{proof}
As mentioned in Section \ref{pslsection} the vertex algebra $V^1(\mathfrak{psl}_{n|n})$ contains two even subalgebras $V^{1}(\mathfrak{sl}_n)$~and~$V^{-1}(\mathfrak{sl}_n)$ with commuting actions. The singular vectors generating maximal submodules for these two subalgebras are known for $n>1$ and $n>3$, respectively~\cite{arakawaSheetsAssociatedVarieties2019}. For $n>1$, the image of the singular vector for $V^1(\mathfrak{sl}_n)$ in $V^1(\mathfrak{psl}_{n|n})$ is
\begin{align*}
    &&\chi_+&=(E^{1,n}_{(-1)})^2\ket{0}&
\end{align*}
whilst for $n>3$ the image of the singular vector for the $V^{-1}(\mathfrak{sl}_n)$ subalgebra is
\begin{align*}
    &&\chi_-&=(E^{n+1,2n}_{(-1)}E^{n+2,2n-1}_{(-1)}-E^{n+1,2n-1}_{(-1)}E^{n+2,2n}_{(-1)})\ket{0}.&
\end{align*}
For these two vectors, we can easily prove the following lemma.
\begin{lemma}\label{lem:subsing}
    Although the vectors $\chi_+$ and $\chi_-$ are not singular vectors of $V^1(\psl_{n|n})$ we have  $\chi_{\pm} \in \langle \chi \rangle = U$.
\end{lemma}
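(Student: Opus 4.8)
We must show that the vectors
\[
\chi_+=(E^{1,n}_{(-1)})^2\ket{0},\qquad
\chi_-=(E^{n+1,2n}_{(-1)}E^{n+2,2n-1}_{(-1)}-E^{n+1,2n-1}_{(-1)}E^{n+2,2n}_{(-1)})\ket{0}
\]
both lie in the submodule $U=\langle\chi\rangle$ generated by $\chi=E^{1,2n-1}_{(-1)}E^{1,2n}_{(-1)}\ket{0}$.

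The strategy is to reach each target by applying suitable lowering operators from $\widehat{\mathfrak n}_-$ (more precisely, degree-zero Chevalley generators acting through their zero modes) to $\chi$ and tracking the resulting vectors using only the mode commutator
\[
[E^{i,j}_{(m)},E^{k,l}_{(-1)}]=[E^{i,j},E^{k,l}]_{(m-1)}+m\,\delta_{m,1}\,\mathrm{str}(E^{i,j}E^{k,l}).
\]
Since $U$ is by definition closed under the action of all of $\widehat{\g}$, any vector obtained this way automatically lies in $U$, so the entire argument reduces to exhibiting explicit finite sequences of generators carrying $\chi$ to $\chi_\pm$.

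**Reaching $\chi_+$.** First I would note that $\chi$ is built from the two odd generators $E^{1,2n-1}$ and $E^{1,2n}$, both sitting in the upper-right (odd) off-diagonal block with row index $1$. The target $\chi_+$ is built from the even generator $E^{1,n}$ of the top-left $\sli_n$ block. The plan is to act on $\chi$ with the zero mode $E^{n,2n}_{(0)}$, an odd generator of the lower-left block that satisfies $[E^{n,2n},E^{1,2n}]=0$ (both have column index $2n$, giving a zero bracket) while $[E^{n,2n},E^{1,2n-1}]$ produces, up to sign, a generator of the even top block. Working through the Leibniz rule for the zero mode acting on the product $E^{1,2n-1}_{(-1)}E^{1,2n}_{(-1)}\ket{0}$, and keeping track of Koszul signs since all three generators are odd, one obtains a multiple of $E^{1,n}_{(-1)}E^{1,2n}_{(-1)}\ket{0}$ or a closely related vector; a second, analogous lowering step (contracting the remaining $2n$ index down to $n$ via $E^{n,2n}_{(0)}$ or $E^{2n-1,2n}_{(0)}$ applied again) collapses the second factor, yielding a nonzero multiple of $(E^{1,n}_{(-1)})^2\ket{0}=\chi_+$. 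The main point to verify is that no cancellation forces the coefficient to vanish; this should follow because the two odd contractions act on distinct tensor factors and the central (double-pole) terms do not contribute at the zero mode.

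**Reaching $\chi_-$ and the expected obstacle.** For $\chi_-$ the plan is symmetric: $\chi$ has both factors with row index $1$ (top block) and column indices $2n-1,2n$ (lower block). To produce $\chi_-$, whose factors have row indices $n+1,n+2$ (lower block) and column indices $2n-1,2n$, I would apply zero modes of lower-left odd generators such as $E^{n+1,1}_{(0)}$ and $E^{n+2,1}_{(0)}$ that shift the row index $1\mapsto n+1$ and $1\mapsto n+2$ respectively, again reading off the commutators from the displayed formula. Acting with $E^{n+1,1}_{(0)}$ on $\chi$ converts one factor's row index, and a second application of $E^{n+2,1}_{(0)}$ converts the other; the antisymmetrization in $\chi_-$ should emerge naturally from the two orderings in which these operators can hit the two factors of the product, combined with the Koszul signs of the odd modes. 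I expect the main obstacle to be bookkeeping of signs and the verification that the two terms assemble into exactly the antisymmetric combination defining $\chi_-$ rather than a symmetric one or a vector with an unwanted extra summand coming from the diagonal Cartan-type terms $D^{i,j}$. Since $\chi_-$ only exists as the relevant $V^{-1}(\sli_n)$ singular vector for $n>3$, I would also confirm that the index manipulations ($n+1,n+2,2n-1,2n$ all distinct) are only consistent in that range, which is precisely the regime in which the lemma claims $\chi_-\in U$.
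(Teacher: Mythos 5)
Your high-level strategy---apply explicit lowering operators to $\chi$ and invoke stability of $U=\langle\chi\rangle$ under all of $\widehat{\g}$---is exactly the paper's strategy, and your route to $\chi_-$ is essentially correct: the paper writes $\chi_-=E^{n+2,1}_{(1)}TE^{n+1,1}_{(1)}T\chi$, and since $E^{n+1,1}_{(1)}\chi=0$ the identity $[T,x_{(1)}]=-x_{(0)}$ reduces this to precisely your composition $E^{n+2,1}_{(0)}E^{n+1,1}_{(0)}\chi$ (up to an overall sign), with the antisymmetric combination emerging from the two factors just as you predict. However, your route to $\chi_+$ fails at the first step. The operator $E^{n,2n}$ is not ``an odd generator of the lower-left block'': with row index $n$ and column index $2n$ it is strictly upper triangular, so $E^{n,2n}_{(0)}\in\widehat{\mathfrak{n}}_+$, and by the very singularity of $\chi$ (Proposition~\ref{singularvectorprop}) it annihilates $\chi$---you get $0$, not a multiple of $E^{1,n}_{(-1)}E^{1,2n}_{(-1)}\ket{0}$. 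Your bracket claims are also false: for $n\geq 2$ one has $[E^{n,2n},E^{1,2n}]=0$ \emph{and} $[E^{n,2n},E^{1,2n-1}]=0$ (no index contraction is possible in either anticommutator), so neither factor of $\chi$ gets converted into a top-block generator.

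Even reading your operator charitably as $E^{2n,n}$ (the lower-left generator your prose describes), the structure is backwards from what you state: the nonvanishing bracket is $[E^{2n,n},E^{1,2n}]=E^{1,n}$ while $[E^{2n,n},E^{1,2n-1}]=0$, giving $E^{2n,n}_{(0)}\chi=-E^{1,2n-1}_{(-1)}E^{1,n}_{(-1)}\ket{0}$. Your proposed second-step operators are then still wrong: the remaining odd index to contract is $2n-1$, not $2n$, and $E^{2n-1,2n}_{(0)}$ (again an element of $\widehat{\mathfrak{n}}_+$) sends $E^{1,2n-1}\mapsto -E^{1,2n}$, i.e.\ moves the index the wrong way. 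The operator that completes the argument is $E^{2n-1,n}_{(0)}$, which yields $E^{2n-1,n}_{(0)}E^{2n,n}_{(0)}\chi=-\chi_+$; this is exactly the content of the paper's formula $\chi_+=E^{2n-1,n}_{(1)}TE^{2n,n}_{(1)}T\chi$. Two further points: the lemma also asserts that $\chi_\pm$ are \emph{not} singular in $V^1(\psl_{n|n})$, which the paper verifies (e.g.\ via $E^{n,n+1}_{(0)}$ and $E^{1,n+1}_{(0)}$) and your proposal never addresses; and throughout, the sign and cancellation bookkeeping---which is the actual mathematical content of this lemma---is left as an expectation (``one obtains'', ``should follow'') rather than carried out.
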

\begin{proof}
It can be checked by an  explicit computation that $\chi_+$, $\chi_-$ are not annihilated by (for instance) $E^{n,n+1}_{(0)}$ and $E^{1,n+1}_{(0)}$, respectively. Therefore, they are not singular vectors of $V^1(\psl_{n|n})$. However, for $n>1$ we have
\begin{align*}
    &&\chi_+ &= E^{2n-1,n}_{(1)} T E^{2n,n}_{(1)} T \chi,&
\end{align*}
whereas for $n>3$
\begin{align*}
    &&\chi_-&=E^{n+2,1}_{(1)} T E^{n+1,1}_{(1)} T \chi .&
\end{align*}
\end{proof}
Before going on to study the associated variety of $\psl_{n|n}$, it appears natural to formulate the following conjecture:
\begin{conjecture}
 The maximal proper submodule of $V^1(\psl_{n|n})$ is $U$. Equivalently
 \begin{align*}
     &&L_1(\psl_{n|n})&=V^1(\psl_{n|n})/U.&
 \end{align*}
\end{conjecture}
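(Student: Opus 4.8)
The plan is to prove the stronger statement $\mathcal{M}=U$, where $\mathcal{M}\subseteq \V$ is the maximal proper submodule, so that $\V/U\cong \V/\mathcal{M}=L_1(\psl_{n|n})$. Since $\V$ is cyclic on the vacuum $\ket 0$, which spans its degree-$0$ space, it has a unique maximal proper submodule $\mathcal{M}$ containing every proper submodule; in particular $U=\langle\chi\rangle$ (Proposition~\ref{singularvectorprop}) satisfies $U\subseteq\mathcal{M}$. Because $\chi$ is singular, the $\widehat{\g}$-submodule it generates is a vertex-algebra ideal, so $W:=\V/U$ is a VOA and comes with a degree-preserving surjection $W\twoheadrightarrow L_1(\psl_{n|n})$. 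As every graded (by degree) piece is finite-dimensional, it suffices to prove the reverse character inequality $\operatorname{ch}W\le\operatorname{ch}L_1(\psl_{n|n})$: equality then forces the surjection to be an isomorphism, i.e.\ $U=\mathcal{M}$. The target is explicit, since $\operatorname{ch}\V$ is the PBW character of the free module on the negative modes of the generators, while $\operatorname{ch}L_1(\psl_{n|n})$ is read off from the simple-current decomposition together with Theorems~\ref{level1simples} and~\ref{negativedecomp} (equivalently Proposition~\ref{cats}), which supply the characters of the even factors $L_1(\sli_n,\omega_j)$ and $L_{-1}(\sli_n,\lambda)$ via lattice and singlet modules.

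To bound $\operatorname{ch}W$ from above I would work relative to the even affine subalgebra $V^1(\sli_n)\otimes V^{-1}(\sli_n)\subseteq\V$. By Lemma~\ref{lem:subsing} the images $\chi_+$ and $\chi_-$ of the maximal-submodule generators of the two even factors both lie in $U$, hence vanish in $W$. For $n>1$ and $n>3$ respectively these generate the maximal ideals of $V^1(\sli_n)$ and $V^{-1}(\sli_n)$~\cite{arakawaSheetsAssociatedVarieties2019}, so the even sub-VOA of $W$ is a nonzero quotient of $L_1(\sli_n)\otimes L_{-1}(\sli_n)$; as a tensor product of simple VOAs this is simple, and therefore the even part of $W$ is exactly $L_1(\sli_n)\otimes L_{-1}(\sli_n)$. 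I would then decompose $W$ as a module over this even VOA, organised by the $\widehat{\mathfrak{gl}}_1$-charge sector $j$ and by degree, using a PBW basis in the negative modes of the odd (off-diagonal) generators, and try to identify each charge sector with the corresponding simple summand $L_1(\sli_n,\omega_j)\otimes L_{-1}(\sli_n,\lambda)$ appearing in $L_1(\psl_{n|n})$. Matching these sector by sector would yield the character identity.

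The hard part will be this last step: showing that imposing the single relation $\chi=0$ and its descendants already collapses every charge sector of $W$ down to the simple even module, i.e.\ that $\V$ carries no further primitive (in particular odd) singular vectors outside $U$. This is essentially a question about the submodule structure of $\widehat{\psl_{n|n}}$ at level $1$, which is delicate precisely because $\psl_{n|n}$ is atypical and the vacuum module lies in a non-semisimple block, so the Shapovalov/Kac--Kazhdan determinant lacks the clean product form of the non-super case. I would attack it in one of two ways: either (i) compute the level-$1$ Shapovalov determinant directly and enumerate all singular vectors, verifying $\mathcal{M}=\langle\chi\rangle$; or (ii) use the ideal generated by $\chi$ to rewrite excess monomials in odd negative modes and thereby exhibit an explicit spanning set of $W$ of graded dimension at most $\operatorname{ch}L_1(\psl_{n|n})$. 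As supporting evidence, the body of the paper already shows that $U$ produces all $2\times2$ minors in the $C_2$-algebra, so $X_W=X_{L_1(\psl_{n|n})}=\overline{\Omin(\sli_n)}$; this confirms the two modules agree as Poisson varieties, though equality of associated varieties does not pin down the character and so cannot by itself close the argument.

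Finally, the low-rank cases $n=2,3$ require separate treatment, since there $\chi_-$ does not generate the maximal ideal of $V^{-1}(\sli_n)$ and the even reduction above breaks down. I expect these can be dispatched by direct computation in low degree, as was done for the associated-variety statement; in all cases a useful first consistency check, which could also seed an induction on the degree for the character bound, is to verify $\dim W_d=\dim L_1(\psl_{n|n})_d$ for small $d$.
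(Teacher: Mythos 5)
The statement you are proving is labelled a \emph{conjecture} in the paper: the authors explicitly leave it open (``we conjecture that $U$ may correspond to the maximal submodule\dots however leave this to future work''), offering only two candidate strategies — inverse quantum Hamiltonian reduction in the style of Fehily, or an extension of the G\"otz--Quella--Schomerus analysis of $\widehat{\psl}_{n|n}$ modules to level $1$. So there is no proof in the paper to match your proposal against; the only question is whether your proposal itself closes the conjecture. It does not. The preliminary reductions you make are sound: $\V$ has a unique maximal proper submodule $\mathcal{M}$ (any proper submodule meets the degree-$0$ line trivially), $U\subseteq\mathcal{M}$ by Proposition~\ref{singularvectorprop}, $W=\V/U$ is a VOA surjecting onto $L_1(\psl_{n|n})$, and equality would follow from the character bound $\mathrm{ch}\,W\le\mathrm{ch}\,L_1(\psl_{n|n})$; likewise, for $n>3$, Lemma~\ref{lem:subsing} plus simplicity of $L_1(\sli_n)\otimes L_{-1}(\sli_n)$ does identify the image of the even affine subalgebra in $W$.

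The genuine gap is the step you yourself flag as ``the hard part,'' and it is not a technical remainder — it \emph{is} the conjecture. Your sector-by-sector matching is circular as stated: a charge sector of $W$, viewed as an $L_1(\sli_n)\otimes L_{-1}(\sli_n)$-module, is a priori only required to have the simple module $L_1(\sli_n,\omega_j)\otimes L_{-1}(\sli_n,\lambda)$ as a quotient (via the surjection $W\twoheadrightarrow L_1(\psl_{n|n})$); identifying the sector \emph{with} that simple module is exactly equivalent to excluding further singular vectors outside $U$, which is what is to be proved. One cannot appeal to semisimplicity to shortcut this, since $L_{-1}(\sli_n)$ is not rational and its ordinary module category admits non-split extensions; and your alternative route (i), the Shapovalov determinant, degenerates precisely because the level-$1$ vacuum block of $\widehat{\psl}_{n|n}$ is atypical and non-semisimple, as you note. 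Route (ii), an explicit PBW-type spanning set for $W$ of the right graded dimension, is not carried out, and nothing in the paper (the $2\times 2$ minors in the $C_2$-algebra, the equality of associated varieties) can substitute for it — as you correctly observe, equality of associated varieties does not control the character. The low-rank cases $n=2,3$, where $\chi_-$ is not known to generate the maximal ideal of $V^{-1}(\sli_n)$, are likewise deferred. In short: your proposal is a reasonable research plan, consistent with and somewhat complementary to the strategies the authors themselves suggest, but it is a plan with its central step missing, not a proof.
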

One possible way to probe whether this conjecture is true and prove it is to employ the inverse quantum Hamiltonian reduction methods used, for example, in \cite{fehilyInverseReductionHooktype2023}. Alternatively, one may try to extend results of~\cite{Gotz:2006qp} to cases that cover the level of interest here.

\section{Associated variety of $L_1(\mathfrak{psl}_{n|n})$}
The strategy to proving the main theorem is to use our knowledge of the associated varieties of the $L_{\pm1}(\sli_n)$ subalgebras and incorporate the fact that the submodule $U$ must at least be contained in the maximal submodule of $V^1(\psl_{n|n})$. 

\subsection{Associated varieties of $L_{\pm 1} (\sli_n)$} We start by stating some known facts about the associated varieties of $L_{\pm 1}(\sli_n)$, and by deriving some immediate consequences for the associated variety of $L_1(\psl_{n|n})$.
\begin{theorem}[\cite{arakawaSheetsAssociatedVarieties2019}]\label{assvarsheet}
For $n>3$
\begin{align*}
     &&X_{L_{-1}(\mathfrak{sl}_n)}&=\overline{\Sh(\mathfrak{sl}_n)}.&
 \end{align*}
 For $n=2$ and $n=3$ it is an $SL_n$-invariant closed subvariety of $\sli_n^*$.
\end{theorem}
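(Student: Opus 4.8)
The plan is to reconstruct the identification of~\cite{arakawaSheetsAssociatedVarieties2019} by realising $X_{L_{-1}(\sli_n)}$ as the reduced zero locus of the image, under the map $\pi$ of Example~\ref{ex:affineassoc}, of the maximal submodule $U_-\subset V^{-1}(\sli_n)$, and then matching this locus to the explicit slice of Lemma~\ref{sheetlemma}. Regardless of the structure of $U_-$, Example~\ref{ex:affineassoc} already guarantees that $X_{L_{-1}(\sli_n)}$ is a conic, closed, $SL_n$-invariant Poisson subvariety of $\sli_n^*\cong\sli_n$; this is precisely the weak statement, valid for every $n\geq 2$. All of the remaining work lies in pinning down which subvariety it is when $n>3$.

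First I would invoke the known generator of the maximal submodule: for $n>3$ the submodule $U_-$ of $V^{-1}(\sli_n)$ is generated by the singular vector whose image in $V^1(\psl_{n|n})$ is $\chi_-$. Passing to the reduced $C_2$-algebra, $\pi$ sends $\chi_-$ to the quadratic $Z_{n+1,2n}Z_{n+2,2n-1}-Z_{n+1,2n-1}Z_{n+2,2n}\in\C[\sli_n^*]$, i.e.\ the $2\times 2$ minor on rows $\{n+1,n+2\}$ and columns $\{2n-1,2n\}$ of the bottom-right $\sli_n$ block. This is a genuine off-diagonal minor exactly when the four indices $n+1,n+2,2n-1,2n$ are distinct, which holds if and only if $n>3$; for $n=2,3$ they collide (e.g.\ $n+2=2n-1$ when $n=3$, so that $E^{n+2,2n-1}$ becomes a Cartan element), the singular vector degenerates, and the sheet description is unavailable. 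This is the structural reason the sharp statement is restricted to $n>3$. By Lemma~\ref{lem:submodule} and $SL_n$-invariance of the ideal, the reduced defining ideal of $X_{L_{-1}(\sli_n)}$ is the radical $I$ of the ideal generated by the $SL_n$-orbit of this single minor; I would record the $SL_n$-submodule $W\subseteq\mathrm{Sym}^2\sli_n^*$ that it spans, sitting inside the space of all $2\times 2$ minors $\Lambda^2\C^n\otimes\Lambda^2(\C^n)^*$.

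I would then prove the two inclusions. The containment $X_{L_{-1}(\sli_n)}\supseteq\overline{\Sh(\sli_n)}$ is the easy half: by invariance it reduces to checking that $\pi(\chi_-)$ vanishes on the slice $\C\xi+\para_u$ of Lemma~\ref{sheetlemma}, which is the explicit verification I would relegate to Appendix~\ref{app:B}. That the locus is not the single point $\{0\}$ follows because $L_{-1}(\sli_n)$ is not lisse: by Theorem~\ref{negativedecomp} it already carries the infinite family of ordinary modules $L(\sli_n,\lambda(m))$, $m\in\Z$, whereas a lisse VOA has only finitely many simple modules. Since $\dim\overline{\Sh(\sli_n)}=\dim\overline{\Omin(\sli_n)}+1$, the lower bound already forces $X_{L_{-1}(\sli_n)}\not\subseteq\overline{\Omin(\sli_n)}$, so $W$ must be a \emph{proper} constituent of the full minor space. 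The reverse containment $X_{L_{-1}(\sli_n)}\subseteq\overline{\Sh(\sli_n)}$ is the heart of the matter: I must show the common vanishing locus $V(W)$ is already contained in the sheet, equivalently $I(\overline{\Sh(\sli_n)})\subseteq I$. The delicate point is the dichotomy, worked out in Appendix~\ref{app:B}, between the $2\times 2$ minors that vanish on $\overline{\Sh(\sli_n)}$ and those that vanish only on the smaller $\overline{\Omin(\sli_n)}$: the module $W$ must consist of exactly the former, strong enough to exclude everything outside the sheet yet weak enough to retain the extra semisimple directions distinguishing $\overline{\Sh(\sli_n)}$ from $\overline{\Omin(\sli_n)}$.

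The main obstacle is therefore this upper bound: proving that the single singular vector $\chi_-$ generates, after $SL_n$-saturation and taking the radical, an ideal that is neither too small (leaving $X$ strictly larger than the sheet) nor the full ideal of all $2\times 2$ minors (which would collapse $X$ onto $\overline{\Omin(\sli_n)}$). Concretely this is the geometric claim $V(W)=\overline{\Sh(\sli_n)}$, i.e.\ that any matrix all of whose minors in $W$ vanish has, up to nilpotent perturbation and degeneration, an eigenvalue of multiplicity $n-1$, the condition characterising $\overline{\Sh(\sli_n)}$. Once $I$ is matched to $I(\overline{\Sh(\sli_n)})$, the irreducibility and closedness of the sheet from Lemma~\ref{sheetlemma} yield the stated equality for $n>3$, while for $n=2,3$ only the $SL_n$-invariant, closed conclusion of the first paragraph survives.
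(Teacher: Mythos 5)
A preliminary point: the paper does not prove Theorem~\ref{assvarsheet} at all --- it is quoted from \cite{arakawaSheetsAssociatedVarieties2019} and used as an external input (notably in Proposition~\ref{contained}). So there is no internal proof to compare against, and your proposal must be judged as a reconstruction of Arakawa--Moreau's argument. As such it is not complete, and the decisive gap is the one you yourself flag and then defer: the inclusion $V(W)\subseteq\overline{\Sh(\sli_n)}$, equivalently the claim that a traceless matrix all of whose minors in $W$ vanish is a scalar multiple of the identity plus a matrix of rank at most one. This is exactly where the content of the theorem beyond the soft $SL_n$-invariance statement lies; calling it ``the main obstacle'' and stopping means the theorem is not proved. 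Everything preceding it --- the identification of $\pi(\chi_-)$ with the minor $Z_{1,n}Z_{2,n-1}-Z_{1,n-1}Z_{2,n}$, hence of $W$ with $U_{2,2}$, and the index-collision explanation of the $n>3$ restriction --- is correct but is the easy part. Closing the gap requires Weyman's analysis of the decomposition $\wedge^2E^*\otimes\wedge^2E\cong V_{1,2}\oplus U_{2,2}$ and of which constituent cuts out which variety; note that the paper's Appendix~\ref{app:B} proves only the \emph{opposite}, easy inclusion (Proposition~\ref{vanishing}: elements of $U_{2,2}$ vanish on the sheet), not the one you need.

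There are also two logical problems in your lower bound. First, your ``easy half'' $\overline{\Sh(\sli_n)}\subseteq X_{L_{-1}(\sli_n)}$ is valid only because you assumed that $\chi_-$ generates the \emph{maximal} submodule of $V^{-1}(\sli_n)$: without that, vanishing of $\pi(\chi_-)$ and its $SL_n$-orbit on the sheet gives only $\overline{\Sh(\sli_n)}\subseteq V(W)$, which says nothing about $X_{L_{-1}(\sli_n)}$ since the latter is also merely contained in $V(W)$. Maximality is a deep input --- in this paper it is attributed to the very reference whose proof you are reconstructing, so invoking it is close to circular --- and you also silently use the fact that the image in $R_{V^{-1}(\sli_n)}$ of the submodule generated by a singular vector equals the ideal generated by the image of its $U(\sli_n)$-orbit (true, but it needs the PBW/degree argument, since a priori higher-degree elements of the submodule could contribute further equations). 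A lower bound that avoids maximality can instead be obtained from the realisation of $L_{-1}(\sli_n)$ inside the rank-$n$ $\beta\gamma$-system, under which the induced map on associated varieties is the moment map $(x,y)\mapsto xy^{T}-\tfrac{1}{n}(y^{T}x)\,\mathrm{Id}$, whose image has closure $\overline{\Sh(\sli_n)}$. Second, the claim that non-lisseness together with $\dim\overline{\Sh(\sli_n)}=\dim\overline{\Omin(\sli_n)}+1$ ``forces'' $X_{L_{-1}(\sli_n)}\not\subseteq\overline{\Omin(\sli_n)}$ is a non sequitur: failure of lisseness only gives $\dim X_{L_{-1}(\sli_n)}\geq 1$, which is perfectly compatible with being contained in $\overline{\Omin(\sli_n)}$.
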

\begin{theorem}[\cite{arakawaRemarkC_2Cofiniteness2010}]
    \begin{align*}
    && X_{L_1(\mathfrak{sl}_n)}&=\lbrace pt\rbrace.&
\end{align*}
\end{theorem}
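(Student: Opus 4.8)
The plan is to realise $X_{L_1(\sli_n)}$ as the zero locus of a Poisson ideal that visibly contains the square of a single root coordinate together with all of its $SL_n$-translates, and to observe that such functions cut out only the origin of $\sli_n^*$. By Example~\ref{ex:affineassoc} the relevant $C_2$-algebra is $R_{V^1(\sli_n)}=\C[\sli_n^*]=\mathrm{Sym}(\sli_n)$, and $X_{L_1(\sli_n)}$ is the reduced zero locus of the ideal $I_{L_1(\sli_n)}$.

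First I would invoke the standard integrable singular vector: at level $k$ the vector $(e_\theta)_{(-1)}^{k+1}\ket{0}$ is singular in $V^k(\sli_n)$, where $e_\theta=E^{1,n}$ is the highest root vector. Specialising to $k=1$ gives exactly $\chi_+=(E^{1,n}_{(-1)})^2\ket{0}$. Since $\chi_+$ is singular, $\langle\chi_+\rangle$ is a proper submodule of $V^1(\sli_n)$ lying inside its maximal submodule, so the surjection $V^1(\sli_n)/\langle\chi_+\rangle\twoheadrightarrow L_1(\sli_n)$ together with Lemma~\ref{lem:submodule} yields $I_{V^1(\sli_n)/\langle\chi_+\rangle}\subseteq I_{L_1(\sli_n)}$. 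It therefore suffices to show that the \emph{smaller} ideal already cuts out a single point.

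Writing $V=V^1(\sli_n)/\langle\chi_+\rangle$, the image of $e_\theta^2\in\mathrm{Sym}(\sli_n)$ under $\pi_V$ is $\overline{(e_\theta)_{(-1)}^2\ket{0}}=\overline{\chi_+}=0$, so $e_\theta^2\in\ker\pi_V$. Because the zero modes act as the coadjoint $\sli_n$-action and $\langle\chi_+\rangle$ is a submodule, $\pi_V$ is $\sli_n$-equivariant and $\ker\pi_V$ is an $SL_n$-stable ideal; hence it contains every translate $(g\cdot e_\theta)^2$ with $g\in SL_n$. Set-theoretically a square vanishes exactly where its base does, so the common zero locus of the functions $(g\cdot e_\theta)^2$ coincides with that of the linear functions $g\cdot e_\theta$. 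As the adjoint representation of $\sli_n$ is irreducible and $e_\theta\neq 0$, these linear functions span all of $\sli_n\subset\C[\sli_n^*]$, so their common zero locus is $\lbrace 0\rbrace$. Hence $V(\ker\pi_V)=\lbrace 0\rbrace$, and since $X_{L_1(\sli_n)}=V(I_{L_1(\sli_n)})\subseteq V(\ker\pi_V)$ while the associated variety is conical and non-empty, we conclude $X_{L_1(\sli_n)}=\lbrace pt\rbrace$.

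I expect the only delicate points to be bookkeeping: the $SL_n$-equivariance of $\pi_V$ and the passage from the squares to their bases. The geometric input—irreducibility of the adjoint representation forcing the common zero locus to collapse to the origin—is immediate. As a cross-check independent of singular vectors, one may instead use Theorem~\ref{positivedecomp} to identify $L_1(\sli_n)\cong V^{Q_n}$ and appeal to the $C_2$-cofiniteness of the lattice vertex algebra of the positive-definite even lattice $Q_n$, whence $\dim X_{L_1(\sli_n)}=0$ and the conical Poisson variety reduces to a point.
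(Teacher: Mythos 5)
Your proof is correct, but it takes a genuinely different route from the paper, which offers no argument at all for this statement: it simply cites \cite{arakawaRemarkC_2Cofiniteness2010}, where (together with earlier work of Zhu, Frenkel--Zhu and Dong--Li--Mason) lisse-ness of integrable affine VOAs is established, and lisse-ness immediately forces the conical variety $X_{L_1(\sli_n)}$ to be a point. Your main argument is instead a self-contained computation: the level-one integrable singular vector $(E^{1,n}_{(-1)})^{2}\ket{0}$ — which is exactly the vector $\chi_+$ the paper itself uses later in Lemma~\ref{lem:subsing} — places $e_\theta^2$ in the $C_2$-ideal of $V^1(\sli_n)/\langle\chi_+\rangle$; the $SL_n$-stability of that ideal (already implicit in Example~\ref{ex:affineassoc}, where $X_V$ is observed to be $G_{\bar 0}$-invariant) gives all translates $(g\cdot e_\theta)^2$; irreducibility of the adjoint representation then collapses the common zero locus to the origin, and Lemma~\ref{lem:submodule} transports this to $L_1(\sli_n)$. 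All steps check out, including the passage from squares to their linear bases (set-theoretically harmless) and the non-emptiness of the variety. What your route buys is that it runs entirely on machinery the paper has already set up, and it is in fact the same strategy the paper deploys for $L_1(\psl_{n|n})$ itself in Proposition~\ref{nilprop} (exhibit explicit vectors in a submodule whose images in the reduced $C_2$-algebra cut the variety down, then use $SL_n$-invariance); what the citation buys is brevity and the stronger package of $C_2$-cofiniteness, which the paper also invokes in the guise of its negation for $\psl_{n|n}$. Your cross-check via Theorem~\ref{positivedecomp} and $C_2$-cofiniteness of the lattice VOA $V^{Q_n}$ of the positive-definite even lattice $Q_n$ is likewise valid, and is arguably the closest in spirit to the proof hidden behind the paper's citation.
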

Since we have shown that for $n>1$ and $n>3$ the singular vectors $\chi_+$ and $\chi_-$ are in $U$, respectively, we can conclude the following.
\begin{proposition}\label{contained}
For $n\in \{2,3\}$, set
\begin{equation*}
    \mathbb{S} =
        \sli_n^*
\end{equation*}
whereas for $n>3$
\begin{equation*}
       \mathbb{S} = \overline{\Sh(\sli_n)} .
\end{equation*}
Then for all $n>1$, $X_{L_1(\mathfrak{psl}_{n|n})}$ is a closed $SL_n$-invariant subvariety of $\mathbb{S}$.
\end{proposition}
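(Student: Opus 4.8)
The plan is to work entirely inside the even part of $\psl_{n|n}$. By Example~\ref{ex:affineassoc} odd generators are nilpotent and drop out of the reduced $C_2$-algebra, so $X_{V^1(\psl_{n|n})} = \g_{\bar 0}^* = \sli_n^* \oplus \sli_n^*$, where (following Section~\ref{pslsection}) the first summand records the top-left block generating the $V^1(\sli_n)$-subalgebra and the second records the bottom-right block generating the $V^{-1}(\sli_n)$-subalgebra; the coordinate ring splits correspondingly as $\mathbb{C}[\sli_n^*]\otimes\mathbb{C}[\sli_n^*]$. Since $L_1(\psl_{n|n}) = V^1(\psl_{n|n})/M$ for the maximal submodule $M$, and $U \subseteq M$, Lemma~\ref{lem:submodule} gives $I_U \subseteq I_{L_1(\psl_{n|n})}$, hence $X_{L_1(\psl_{n|n})} \subseteq X_{V^1(\psl_{n|n})/U}$. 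It therefore suffices to bound the latter variety inside $\mathbb{S}$.

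First I would use $\chi_+ \in U$ (Lemma~\ref{lem:subsing}), which holds for all $n>1$. As the image of the singular vector generating the maximal submodule of the subalgebra $V^1(\sli_n)$, the reduced $C_2$-ideal of $\langle\chi_+\rangle$ contains, expressed in the first-factor coordinates, the defining ideal of $X_{L_1(\sli_n)}$. Because $X_{L_1(\sli_n)} = \{pt\}$ and associated varieties of these quotients are conic, that point is the origin, so these relations collapse the first factor, giving $X_{V^1(\psl_{n|n})/\langle\chi_+\rangle} \subseteq \{0\}\oplus\sli_n^*$. Using $\langle\chi_+\rangle \subseteq U$ this yields $X_{L_1(\psl_{n|n})}\subseteq X_{V^1(\psl_{n|n})/U}\subseteq\{0\}\oplus\sli_n^*\cong\sli_n^* = \mathbb{S}$ whenever $n\in\{2,3\}$, which is precisely the claim in that range.

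For $n>3$ I would additionally invoke $\chi_- \in U$, the image of the singular vector generating the maximal submodule of $V^{-1}(\sli_n)$. By the same mechanism its reduced $C_2$-ideal contains, in the second-factor coordinates, the defining ideal of $X_{L_{-1}(\sli_n)} = \overline{\Sh(\sli_n)}$ (Theorem~\ref{assvarsheet}), so $X_{V^1(\psl_{n|n})/\langle\chi_-\rangle}\subseteq\sli_n^*\oplus\overline{\Sh(\sli_n)}$. Since the two families of relations live on disjoint tensor factors and both ideals lie in $I_U$, combining them gives $X_{V^1(\psl_{n|n})/U}\subseteq\{0\}\oplus\overline{\Sh(\sli_n)}\cong\overline{\Sh(\sli_n)} = \mathbb{S}$. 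In either range of $n$ the chain $X_{L_1(\psl_{n|n})}\subseteq X_{V^1(\psl_{n|n})/U}\subseteq\mathbb{S}$ closes the argument; closedness and $SL_n$-invariance are inherited from Example~\ref{ex:affineassoc} (the relevant $SL_n$ being the factor acting on the surviving second summand), and $\mathbb{S}$ is itself closed and $SL_n$-invariant.

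The step I expect to be most delicate is the bookkeeping in the two middle paragraphs: justifying that the reduced $C_2$-ideal of the submodule generated by $\chi_\pm$ \emph{inside the large algebra} $V^1(\psl_{n|n})$ restricts on the relevant tensor factor to exactly the ideal cutting out $X_{L_{\pm1}(\sli_n)}$, and not to something a priori larger or smaller. This rests on the compatibility of the $C_2$-algebra maps induced by the inclusions $V^{\pm1}(\sli_n)\hookrightarrow V^1(\psl_{n|n})$, together with the facts that $\chi_\pm$ is the honest generator of the maximal submodule of the corresponding affine subalgebra and that the two even subalgebras have commuting actions, so the defining relations of the point and of the sheet closure are produced independently on the two factors. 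The remainder is the elementary manipulation of conic $SL_n\times SL_n$-invariant subvarieties of $\sli_n^*\oplus\sli_n^*$.
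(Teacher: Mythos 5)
Your proposal is correct and takes essentially the same approach as the paper's own proof: both reduce to the even part $\sli_n^*\times\sli_n^*$, use Lemma~\ref{lem:subsing} to place $\chi_\pm$ in $U$ (hence in the maximal submodule of $V^1(\psl_{n|n})$), and combine Lemma~\ref{lem:submodule} with $X_{L_1(\sli_n)}=\lbrace pt\rbrace$ and Theorem~\ref{assvarsheet} to collapse the first factor and bound the second by $\overline{\Sh(\sli_n)}$ (or trivially by $\sli_n^*$ when $n\in\lbrace 2,3\rbrace$). The only cosmetic difference is that you route the bound through $X_{V^1(\psl_{n|n})/U}$ whereas the paper works directly with $I_{L_1(\psl_{n|n})}$, and the compatibility of $C_2$-ideals under the subalgebra embeddings that you flag as delicate is relied upon (equally implicitly) in the paper's application of Lemma~\ref{lem:submodule}.
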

\begin{proof}
First, note that according to Example~\ref{ex:affineassoc}, there is a surjective map
\begin{align*}
    &&\pi_{L_1(\psl_{n|n})}:\C[\psl_{n|n}^*]&\twoheadrightarrow L_1(\psl_{n|n})/(t^{-2}\psl_{n|n}[t^{-1}]L_1(\psl_{n|n}))&
\end{align*}
such that
\begin{align*}
    &&X_{L_1(\psl_{n|n})}&\cong \mathrm{specm}\; I_{L_1(\psl_{n|n})}.&
\end{align*}
Following the remarks in Example~\ref{ex:affineassoc}, since odd elements are nilpotent, this is a closed $SL_{n} \times SL_{n}$-invariant subvariety of $(\psl^*_{n|n})_{\bar{0}} \cong \sli_{n}^*\times \sli_n^* \cong \sli_{n} \times \sli_n$, where we use the identification of Remark~\ref{rem:iden}. We also have surjective maps for each subalgebra
\begin{align*}
    &&\pi_{L_{\pm1(\sli_n)}}:\C[\sli_n]&\twoheadrightarrow L_{\pm1}(\sli_n)/(t^{-2}\sli_n[t^{-1}]L_{\pm1}(\sli_n))&
\end{align*}
such that
\begin{align*}
    &&X_{L_{\pm1}(\sli_n)}&\cong \mathrm{specm}\ker \pi_{L_{\pm1}(\sli_n)}.&
\end{align*}
We showed in Lemma~\ref{lem:subsing} that for $n>3$ the singular vectors $\chi_+$ and $\chi_-$ generating maxmial submodules of $V^{\pm 1}(\sli_n)$ are in $U$, and therefore in the maximal submodule of $V^{1}(\psl_{n|n})$. Thus, we can conclude from Lemma~\ref{lem:submodule}
\begin{align*}
    && I_{L_1(\sli_n)} \times I_{L_{-1}(\sli_n)}  &\subseteq I_{L_1(\psl_{n|n})},&
\end{align*}
so that 
\begin{align*}
    &&X_{L_1(\psl_{n|n})} &\subseteq \{pt\} \times \overline{\Sh (\sli_n)}\cong \mathbb{S}.&
\end{align*}
Similarly for $n\in \{1,2\}$ we can conclude that 
\begin{align*}
    && I_{L_1(\sli_n)} \times I_{V^1(\sli_n)}  &\subseteq I_{L_1(\psl_{n|n})},&
\end{align*}
which also implies 
\begin{align*}
    &&X_{L_1(\psl_{n|n})} &\subseteq \{pt\} \times \sli_n \cong \mathbb{S}.&
\end{align*}
\end{proof}

\subsection{Associated variety of $L_1(\psl_{n|n})$} Physical expectations from the 3d A-model for SQED[$n$] tell us that we should have
\begin{align*}
     &&X_{L_1(\mathfrak{psl}_{n|n})}&=\overline{\Omin(\mathfrak{sl}_n)}.&
\end{align*}
Notice that Proposition~\ref{contained} is compatible with this expectation, because as discussed in Section~\ref{subsubsec:sheetsA}, we have $\overline{\Omin(\mathfrak{sl}_n)}\subset\overline{\Sh(\mathfrak{sl}_n)} \subset \sli_n\cong \sli_n^*$.\\

In order to prove that the physical expectation holds, we first find vectors in $U$ (and so in the maximal submodule of $V^1(\psl_{n|n})$), whose image in the reduced $C_2$-algebra correspond to all $2\times 2$ minors of $\sli_n$ discussed in Remark~\ref{rem:iden}. These include those minors that for $n>3$ do not vanish on $\overline{\Sh(\sli_n)}$, a distinction that for completeness we discuss in Appendix~\ref{app:B}.
\begin{proposition}
  \label{nilprop}
    $X_{V^1(\mathfrak{psl}_{n|n})/U}\subseteq \overline{\Omin(\sli_n)}$.
\end{proposition}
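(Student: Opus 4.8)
The plan is to exhibit explicit vectors inside the submodule $U$ whose images in the reduced $C_2$-algebra $(R_{V^1(\psl_{n|n})/U})_{\mathrm{red}} \cong \C[\sli_n^* \times \sli_n^*]$ (really in the second, $\sli_n$-valued factor after Proposition~\ref{contained} collapses the first factor to a point) reproduce \emph{all} the $2\times 2$ minors generating the ideal $J$ of Remark~\ref{rem:iden}. By Proposition~\ref{contained} we already know $X_{V^1(\psl_{n|n})/U} \subseteq \mathbb{S}$, which for $n > 3$ is $\overline{\Sh(\sli_n)}$ and for $n\in\{2,3\}$ is all of $\sli_n^*$; so the content is to shrink $\mathbb{S}$ down to $\overline{\Omin(\sli_n)}$ by producing enough vanishing relations at the level of the associated variety. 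Since $\overline{\Omin(\sli_n)} = \mathrm{Spec}(\mathrm{Sym}(\sli_n)/J)$, it suffices to show that every generator $Z_{i,j}Z_{k,l} - Z_{i,l}Z_{k,j}$ (with $i<k$, $j<l$) of $J$ lies in $I_{V^1(\psl_{n|n})/U}$.

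First I would produce, for each quadruple of indices $(i,j,k,l)$ with $i<k$ and $j<l$ in the range $n+1 \le \cdot \le 2n$ parametrizing the bottom-right $\sli_n$ block, a degree-$2$ vector in $U$ of the schematic form
\begin{align*}
    \chi_{i,j,k,l} &= \bigl(E^{i,j}_{(-1)}E^{k,l}_{(-1)} - E^{i,l}_{(-1)}E^{k,j}_{(-1)}\bigr)\ket{0} + (\text{lower-degree corrections}),
\end{align*}
obtained by acting on the generating singular vector $\chi = E^{1,2n-1}_{(-1)}E^{1,2n}_{(-1)}\ket{0}$ with suitable products of modes $E^{a,b}_{(m)}$ and the translation operator $T$, exactly in the spirit of Lemma~\ref{lem:subsing} where $\chi_-$ was recovered as $E^{n+2,1}_{(1)}TE^{n+1,1}_{(1)}T\chi$. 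The image $\pi_{V^1(\psl_{n|n})/U}(\chi_{i,j,k,l})$ in the reduced $C_2$-algebra is precisely the corresponding $2\times 2$ minor $Z_{i,j}Z_{k,l} - Z_{i,l}Z_{k,j}$, because the degree-$2$ part survives to $R_V$ while any correction terms of degree $\le 1$ either vanish or, being lower order, do not affect the leading symbol; the odd generators contribute nilpotents that drop out upon passing to the reduced algebra. Acting further by the adjoint $SL_n$-action (equivalently, by zero-modes $E^{a,b}_{(0)}$, which preserve $U$ since $U$ is a submodule) then sweeps out all $2\times 2$ minors of the $\sli_n$ block, not merely those built from the corner indices.

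The key conclusion I would then draw: once $J \subseteq I_{V^1(\psl_{n|n})/U}$, we get $X_{V^1(\psl_{n|n})/U} \subseteq V(J) = \overline{\Omin(\sli_n)}$. For $n > 3$ this is the genuinely new information, since it eliminates exactly the semisimple directions of the sheet $\overline{\Sh(\sli_n)}$ that the minors distinguished in Appendix~\ref{app:B} detect but which survive in $\mathbb{S}$; for $n\in\{2,3\}$ it cuts $\sli_n^*$ all the way down to the minimal orbit closure directly. I would invoke Lemma~\ref{lem:submodule} to transport containments from $U$ to the maximal submodule, and Remark~\ref{rem:iden} to match conventions between $\sli_n$ and $\sli_n^*$.

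\textbf{Main obstacle.} The hard part will be the explicit construction of the vectors $\chi_{i,j,k,l}$ in $U$ realizing \emph{every} minor, particularly the ``off-diagonal'' or mixed minors that for $n>3$ are the ones cutting out $\overline{\Omin}$ inside $\overline{\Sh}$. One must verify both that the chosen combination of raising/lowering modes and $T$'s actually lands inside $\langle\chi\rangle$ (rather than merely inside $V^1(\psl_{n|n})$), and that after reducing modulo $C_2(V)$ and passing to the reduced algebra the leading symbol is exactly the intended minor with no uncontrolled lower-order contamination and no accidental cancellation. Managing the bookkeeping of the structure constants in the OPE, while keeping track of which modes lie in $\widehat{\mathfrak{n}}_+$, is where the real computational effort sits; the conceptual step afterwards — comparing the resulting $SL_n$-invariant subvariety with $\overline{\Omin(\sli_n)}$ and using that the latter contains no nontrivial proper $SL_n$-invariant subvariety together with non-lisseness — is comparatively short.
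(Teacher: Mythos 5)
Your proposal follows essentially the same route as the paper's proof: act on the singular vector $\chi$ with modes $E^{a,b}_{(m)}$ and the translation operator $T$ (in the spirit of Lemma~\ref{lem:subsing}) to manufacture vectors in $U$ whose images in the reduced $C_2$-algebra are the $2\times 2$ minors, conclude $J\subseteq I_{V^1(\psl_{n|n})/U}$, and hence $X_{V^1(\psl_{n|n})/U}\subseteq V(J)=\overline{\Omin(\sli_n)}$. The step you defer as the ``main obstacle'' is precisely the substance of the paper's proof: it writes the vectors explicitly, namely $u_{i,k,j,l}=E^{k,1}_{(1)}TE^{i,1}_{(1)}TE^{2n,l}_{(0)}E^{2n-1,j}_{(0)}\chi$ for $n+1\leq i<k\leq 2n$, $n+1\leq j<l\leq 2n-1$ (with small variants when $l=2n$ and when $(j,l)=(2n-1,2n)$), and verifies via the map $\tilde{\Psi}$ that, e.g., $\tilde{\Psi}(u_{i,j,j,i})=X_{i,j}X_{j,i}-X_{j,j}X_{i,i}$, so that all minors are hit directly for every index quadruple, making your $SL_n$-sweep (which is sound, since the ideal $I_{V^1(\psl_{n|n})/U}$ is $SL_n$-invariant) unnecessary.
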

\begin{proof}
Consider the map
\begin{align*}
    &&\Psi:\V&\to R_\V=\C[\psl_{n|n}^*]&\\
    &&v&\mapsto \begin{cases}
        X_{\alpha_1}\dots X_{\alpha_k}, & v=E^{\alpha_1}_{(-1)}\dots E^{\alpha_k}_{(-1)}\ket{0}\notin C_2(V^1(\psl_{n|n})) \\
        0 ,& v\in C_2(V^1(\psl_{n|n}))
    \end{cases}&
\end{align*}
where the $X_\alpha$ correspond to the coordinates of elementary matrices in $\mathrm{End}(\C^{n|n})$ with indices matching those of $\alpha$. We then denote by $\tilde{\Psi}=\sigma\circ \Psi$ the composition with the reduction map.\\\\
For $n+1\leq i< k \leq 2n$ , $n+1\leq j< l \leq 2n-1$ define the vector
\begin{align*}
    &&u_{i,k,j,l}=&E^{k,1}_{(1)}TE^{i,1}_{(1)}TE^{2n,l}_{(0)}E^{2n-1,j}_{(0)}\chi&,
\end{align*}
whereas for $n+1\leq i< k \leq 2n-1$ , $n+1\leq j< 2n-1$  $l=2n$ the small variation thereof
\begin{align*}
    &&u_{i,k,j,l}&=E^{k,1}_{(1)}TE^{i,1}_{(1)}TE^{2n-1,j}_{(0)}\chi,&
\end{align*}
and finally for $j=2n-1$ and $l=2n$
\begin{align*}
&&u_{i,k,j,l}&=E^{2n,1}_{(1)}TE^{2n-1,1}_{(1)}T\chi.&
\end{align*}
Compute for instance $u_{i,j,j,i}$ for $i<j<2n$:
\begin{align*}
&&u_{i,j,j,i}=&E^{j,1}_{(1)}TE^{i,1}_{(1)}TE^{2n,j}_{(0)}E^{2n-1,i}_{(0)}\chi&\\
    &&&=(D^{i,1}_{(-1)}D^{j,1}_{(-1)}-E^{j,i}_{(-1)}E^{i,j}_{(-1)}+E^{j,1}_{(-1)}E^{1,j}_{(-1)}-E^{1,i}_{(-1)}E^{i,1}_{(-1)}&\\
    &&&\ \ \ \ -D^{i,1}_{(-2)}-D^{j,1}_{(-2)}-E^{i,1}_{(-2)}-E^{1,j}_{(-2)} )\ket{0}.
\end{align*}
Since we ultimately only consider even terms with no modes of degree greater than $1$, only the first two terms will remain under application of $\Tilde{\Psi}$, and we get
\begin{align*}
    &&\tilde{\Psi}(u_{i,j,j,i})&=X_{i,j}X_{j,i}-X_{j,j}X_{i,i},
    &
\end{align*}
which are some of the desired minors of $\sli_n$ discussed in~\ref{subsec:nil}, Remark~\ref{rem:iden} and Appendix~\ref{app:B}.\footnote{Notice that due to the range of $i,j,k,l$, these minors are in $\sli_n \cong R_{V^1(\sli_n)}$, as required. In the decomposition outlined in Appendix~\ref{app:B}, these are minors in $V_{1,2}$ that do not vanish on $\overline{\Sh (\sli_n) } $.} A similar, albeit simpler computation shows that the following holds for all $n+1\leq i< k \leq 2n$, $n+1\leq j< l \leq 2n$
\begin{align*}
    &&\tilde{\Psi}(u_{i,k,j,l})&=X_{i,j}X_{k,l}-X_{k,j}X_{i,l}
    ,&
\end{align*}
covering all of the $2\times 2$ minors of $\sli_n$. By construction $u_{i,k,j,l}\in U$ and so $\tilde{\Psi}(u_{i,k,j,l})\in I_{V^1(\psl_{n|n})/U}$. Thus, all of the $2\times 2$ minors must vanish on $X_{\V/U}$. The result follows.
\end{proof}
We are now in a position to prove the main theorem:
\begin{theorem}
For $n>1$
\begin{align*}
&&X_{L_1(\mathfrak{psl}_{n|n})}&=\overline{\Omin(\sli_n)}.&
\end{align*}   
\end{theorem}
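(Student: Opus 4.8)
The plan is to establish the two inclusions $X_{L_1(\psl_{n|n})}\subseteq\overline{\Omin(\sli_n)}$ and $\overline{\Omin(\sli_n)}\subseteq X_{L_1(\psl_{n|n})}$ separately, obtaining the first from the explicit vectors already constructed and the second from a rigidity argument for $SL_n$-invariant subvarieties of the minimal orbit closure. Throughout I use the identification $(\psl_{n|n}^*)_{\bar 0}\cong\sli_n^*\times\sli_n^*\cong\sli_n\times\sli_n$ of Remark~\ref{rem:iden} and the fact, recorded in Example~\ref{ex:affineassoc}, that $X_{L_1(\psl_{n|n})}$ is cut out inside $(\psl_{n|n}^*)_{\bar 0}$ by the reduced ideal $I_{L_1(\psl_{n|n})}$.

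For the first inclusion I would argue as follows. Since $U=\langle\chi\rangle$ is a proper submodule of $\V$, it is contained in the (unique) maximal proper submodule of $\V$, whose quotient is $L_1(\psl_{n|n})$. Applying Lemma~\ref{lem:submodule} to this inclusion of submodules yields $I_{\V/U}\subseteq I_{L_1(\psl_{n|n})}$, hence the reverse inclusion of zero loci $X_{L_1(\psl_{n|n})}\subseteq X_{\V/U}$. Proposition~\ref{nilprop} already gives $X_{\V/U}\subseteq\overline{\Omin(\sli_n)}$, since the vectors $u_{i,k,j,l}\in U$ produce, under $\tilde\Psi$, all the $2\times2$ minors that cut out $\overline{\Omin(\sli_n)}$ inside $\sli_n$. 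Chaining the two containments gives $X_{L_1(\psl_{n|n})}\subseteq\overline{\Omin(\sli_n)}$ for every $n>1$.

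The second inclusion I would deduce from rigidity rather than by exhibiting defining relations directly. By Proposition~\ref{contained}, $X_{L_1(\psl_{n|n})}$ is a closed $SL_n$-invariant subvariety of $\mathbb{S}$, and the inclusion just proved places it inside $\overline{\Omin(\sli_n)}$. Now $\overline{\Omin(\sli_n)}=\{0\}\sqcup\Omin(\sli_n)$ is the union of exactly two $SL_n$-orbits, so its only closed $SL_n$-invariant subvarieties are $\{0\}$ and $\overline{\Omin(\sli_n)}$ itself. Consequently it suffices to rule out $X_{L_1(\psl_{n|n})}=\{0\}$, i.e. to show that $L_1(\psl_{n|n})$ is not lisse; the $SL_n$-invariance then forces $X_{L_1(\psl_{n|n})}=\overline{\Omin(\sli_n)}$, completing the proof.

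The main obstacle is therefore the positive-dimensionality of $X_{L_1(\psl_{n|n})}$, equivalently the failure of lisse-ness. For $n=2$ this is already contained in Theorem~\ref{bfproof}. In general I would try to extract it from the even subalgebra $L_{-1}(\sli_n)\subseteq L_1(\psl_{n|n})$ spanned by the bottom-right block: by Theorem~\ref{negativedecomp} this VOA has infinitely many inequivalent ordinary irreducible modules $L(\sli_n,\lambda(m))$, $m\in\Z$, so it is not lisse, and by Theorem~\ref{assvarsheet} its associated variety $\overline{\Sh(\sli_n)}$ is positive-dimensional. The delicate point is to transfer this to the full algebra: the inclusion of the bottom-right factor induces a Poisson map $R_{L_{-1}(\sli_n)}\to R_{L_1(\psl_{n|n})}$, and one must check that the corresponding morphism $X_{L_1(\psl_{n|n})}\to\overline{\Sh(\sli_n)}$ is not the constant map to the origin, equivalently that the maximal submodule of $\V$ does not annihilate the bottom-right currents at the level of the reduced $C_2$-algebra. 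I expect this to be the crux; the cleanest route is to verify that some linear coordinate of the bottom-right block survives as a non-nilpotent element of $R_{L_1(\psl_{n|n})}$, so that $\dim X_{L_1(\psl_{n|n})}>0$ and the rigidity argument of the previous paragraph applies.
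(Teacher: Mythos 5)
Your architecture coincides with the paper's own proof: the chain $X_{L_1(\psl_{n|n})}\subseteq X_{V^1(\psl_{n|n})/U}\subseteq\overline{\Omin(\sli_n)}$ obtained from Lemma~\ref{lem:submodule} and Proposition~\ref{nilprop}, followed by the rigidity observation that the only closed $SL_n$-invariant subvarieties of $\overline{\Omin(\sli_n)}$ are $\{0\}$ and $\overline{\Omin(\sli_n)}$ itself, reducing everything to showing $X_{L_1(\psl_{n|n})}\neq\{0\}$. But there is a genuine gap precisely at the step you yourself flag as ``the crux'': for $n>2$ you never establish that $L_1(\psl_{n|n})$ is not lisse. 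The paper closes this step with a single external input, namely that $L_1(\psl_{n|n})$ is known not to be $C_2$-cofinite by \cite[\S 0.4]{gorelikSimplicityVacuumModules2006}, which handles all $n>1$ uniformly (your appeal to Theorem~\ref{bfproof} covers only $n=2$).

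Moreover, the route you sketch for filling the gap cannot work as stated. Non-lisseness of a vertex subalgebra does not ascend to an extension: the algebra map $R_{L_{-1}(\sli_n)}\to R_{L_1(\psl_{n|n})}$ induced by the inclusion of the bottom-right block need not be injective, and the induced morphism $X_{L_1(\psl_{n|n})}\to X_{L_{-1}(\sli_n)}$ can perfectly well be constant. The standard counterexample is the Heisenberg VOA inside a lattice VOA: the subalgebra is not lisse (it has infinitely many Fock modules and a positive-dimensional associated variety), yet the lattice extension is lisse. This is exactly the structural situation at hand, since $L_1(\psl_{n|n})$ is an infinite-order simple current extension of $L_{-1}(\sli_n)\otimes L_1(\sli_n)$; so any argument that uses only the existence of the non-lisse subalgebra $L_{-1}(\sli_n)$, its infinitely many ordinary irreducibles, or $X_{L_{-1}(\sli_n)}=\overline{\Sh(\sli_n)}$, is doomed in principle. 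Your proposed verification that ``some linear coordinate of the bottom-right block survives as a non-nilpotent element of $R_{L_1(\psl_{n|n})}$'' is logically equivalent to the statement $X_{L_1(\psl_{n|n})}\neq\{0\}$ that you are trying to prove, and no method is offered for it beyond restating it; to repair the proof you need either detailed control of the maximal submodule of $V^1(\psl_{n|n})$ (which the paper explicitly does not have) or an external non-lisseness result such as the one the paper cites.
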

\begin{proof}
It is known that $L_1(\mathfrak{psl}_{n|n})$ is not $C_2$-cofinite \cite[\S 0.4]{gorelikSimplicityVacuumModules2006}. This implies $\dim X_{L_1(\mathfrak{psl}_{n|n})}\neq 0$ so we can conclude $X_{L_1(\mathfrak{psl}_{n|n})}\neq \lbrace pt \rbrace$. Moreover, by Proposition~\ref{nilprop}, $X_{L_1 (\psl_{n|n})}\subseteq X_{V^1(\psl_{n|n})/U} \subseteq \overline{\Omin}$. The only non-trivial $SL_n$-invariant subvarieties of $\overline{\Omin(\sli_n)}$ are itself and $\Omin(\sli_n)$. However $X_{L_1(\mathfrak{psl}_{n|n})}$ must be closed, hence the result.
\end{proof}
\begin{corollary} For all $n>1$ the VOA $L_1(\psl_{n|n})$ is quasi-lisse.
\end{corollary}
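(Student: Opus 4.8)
The plan is to deduce the corollary immediately from the theorem proved just above, reducing it to an elementary statement in the Poisson geometry of nilpotent cones. By that theorem we have, for every $n>1$, an isomorphism of Poisson varieties
\begin{align*}
    &&X_{L_1(\psl_{n|n})}&=\overline{\Omin(\sli_n)}.&
\end{align*}
According to the definition of quasi-lisse recalled in the Preliminaries, it therefore suffices to exhibit finitely many symplectic leaves on $\overline{\Omin(\sli_n)}$.

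First I would pin down the relevant Poisson structure. As explained in Example~\ref{ex:affineassoc}, the bracket on $R_{L_1(\psl_{n|n})}$ is the one induced from the Lie bracket, so that under the identification of Remark~\ref{rem:iden} the associated variety $\overline{\Omin(\sli_n)}\subset\sli_n\cong\sli_n^*$ carries the restriction of the Kirillov--Kostant--Souriau structure. Its symplectic leaves are then precisely the coadjoint (equivalently, adjoint) $SL_n$-orbits contained in it. I would then invoke the explicit description from Section~\ref{subsec:nil}, namely $\overline{\Omin(\sli_n)}=\{Z\in\sli_n\mid \mathrm{rank}(Z)\leq 1\}$, which is the disjoint union of the rank-one matrices, i.e.\ $\Omin(\sli_n)$, together with the single point $\{0\}$. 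Hence $\overline{\Omin(\sli_n)}$ is stratified by exactly two symplectic leaves; in particular the number of leaves is finite, and $L_1(\psl_{n|n})$ is quasi-lisse for all $n>1$.

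There is no genuine obstacle here: the only point meriting a sentence is the identification of the symplectic leaves with the orbits, which is standard and follows from the fact that, once the bracket is the Kirillov--Kostant--Souriau one, the leaf through a point $x$ is its $SL_n$-orbit. More conceptually, one could bypass even the explicit rank description and simply note that any nilpotent orbit closure in a simple Lie algebra is a union of finitely many nilpotent orbits, since there are finitely many of them (labelled by partitions of $n$ in type $A$, cf.\ Remark~\ref{rem:partition}); each such orbit is a symplectic leaf, so finiteness is automatic. Either route yields the claim at once.
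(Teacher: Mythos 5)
Your proof is correct and takes essentially the same approach as the paper: the paper's proof of this corollary consists precisely of the observation that $\overline{\Omin(\sli_n)}$ is known to have finitely many symplectic leaves, applied to the theorem just proved. The only difference is that you spell out what the paper cites as known, namely that under the Kirillov--Kostant--Souriau structure the leaves are exactly the two adjoint orbits $\Omin(\sli_n)$ and $\lbrace 0\rbrace$.
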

\begin{proof}
    This follows since $\overline{\Omin(\sli_n)}$ is known to have finitely many symplectic leaves.
\end{proof}

\subsection{Symplectic dual pairs from $L_1(\psl_{n|n})$} We conclude with a remark on symplectic duality. As mentioned in the introduction, an immediate application of Theorem~8.32 and Proposition~8.39 of~\cite{ballin3dMirrorSymmetry2023} gives the following
\begin{theorem}[\cite{ballin3dMirrorSymmetry2023}]
For SQED[$n$], with $\mathbf{1}_\rho^A\in \mathcal{C}_\rho^A$ the tensor unit in a suitable category of modules $\mathcal{C}_\rho^A$ for $V^A_\rho$, we have
 \begin{align*}
     \mathrm{Ext}^\bullet \left(\mathbf{1}_{\rho}^A,\mathbf{1}_{\rho}^A\right) \cong \mathbb{C}[A_{n-1}].&
 \end{align*}
\end{theorem}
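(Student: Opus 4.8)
The plan is to compute the derived endomorphism algebra directly inside $\mathcal{C}_\rho^A$, taking advantage of the fact that for an abelian theory this category---together with all of its Hom- and Ext-spaces---is controlled by the lattice and simple-current data underlying the free-field realisation of $V^A_\rho$. First I would recall from~\cite{ballin3dMirrorSymmetry2023} the explicit model for $\mathcal{C}_\rho^A$: by the decomposition of Proposition~\ref{cats} its objects are assembled from singlet-times-Fock modules organised by their monopole (topological) charge, with the discrete part of the grading being $H\cong\Z_n$; the tensor unit $\mathbf{1}_\rho^A$ is $V^A_\rho$ sitting in charge zero; and morphisms and higher extensions between such modules reduce to computations of extension groups in the underlying Heisenberg/Fock system. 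This turns the problem into a tractable homological computation in a free-field setting.

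Next I would identify generators of $\mathrm{Ext}^\bullet(\mathbf{1}_\rho^A,\mathbf{1}_\rho^A)$ and grade them by monopole charge. From the relative BRST definition of $V^A_\rho$ (Definition~\ref{def:VA}), the surviving zero mode of the gauged $\widehat{\mathfrak{gl}_1}$ current supplies a charge-zero self-extension class $z$, the image of the vector-multiplet scalar, while the two spectral-flow sectors carrying $\pm 1$ units of topological charge supply classes $x$ and $y$, the images of the monopole-creating modules, of charge $+1$ and $-1$ respectively. Thus the Ext algebra is $\Z$-graded by monopole charge and generated in low degree by $x,y,z$.

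The heart of the argument is to pin down the relation. I would compute the Yoneda product $x\cdot y$ of the two opposite-charge self-extensions and claim that it equals $z^{\,n}$ up to a nonzero scalar, the exponent $n$ arising from the $n$ hypermultiplet Fock factors of SQED$[n]$ (equivalently, from the order of $H\cong\Z_n$). Establishing $xy=z^{\,n}$ together with the absence of further generators then yields the presentation $\C[A_{n-1}]\cong\C[x,y,z]/(xy-z^{\,n})$, which is the coordinate ring of the Coulomb branch $\C^2/\Z_n$; as a cross-check this matches the BFN Coulomb branch of SQED$[n]$~\cite{Braverman:2016wma}.

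The hard part will be this last completeness statement: showing that $x,y,z$ exhaust the generators and that $xy-z^{\,n}$ is the only relation, i.e.\ that no spurious classes survive in higher cohomological degree and that every higher Ext group is accounted for by monomials in $x,y,z$. This demands full control of the homological algebra of $\mathcal{C}_\rho^A$---in particular a nondegeneracy and vanishing analysis of the relevant Fock-space extension groups---which is precisely the content supplied by Theorem~8.32 and Proposition~8.39 of~\cite{ballin3dMirrorSymmetry2023}, and from which the stated isomorphism follows at once for $\rho=(1,\dots,1)^T$.
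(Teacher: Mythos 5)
Your proposal has a genuine gap, and it sits exactly where you flag ``the hard part.'' The direct A-side computation you outline --- producing classes $x,y,z$ graded by monopole charge, establishing the Yoneda relation $xy=z^{n}$, and proving completeness --- is never actually performed, and none of it is supplied by the results you cite at the end. Theorem~8.32 of~\cite{ballin3dMirrorSymmetry2023} is not a ``nondegeneracy and vanishing analysis of the relevant Fock-space extension groups'' in $\mathcal{C}_\rho^A$: it is the mirror-symmetry equivalence of categories $\mathcal{C}_\rho^A \simeq \mathcal{C}_{\rho^\vee}^B$, and Proposition~8.39 computes the self-extensions of the tensor unit on the \emph{B-side}, identifying them with the coordinate ring of the Higgs branch of the mirror theory determined by $\rho^\vee$. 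Neither result provides a monopole-charge grading of A-side Ext groups, isolates generators $x,y,z$, or establishes the relation $xy=z^{n}$. Your intermediate steps are also heuristic as stated: a self-extension class of $\mathbf{1}_\rho^A$ is not the ``image of a monopole-creating module'' (spectral-flow sectors are objects of $\mathcal{C}_\rho^A$, not cocycles in a resolution of the unit), and converting that physical picture into actual Ext classes with a computable Yoneda product would require constructing a resolution of the tensor unit in this category, which you never do.

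The paper's proof avoids all of this. It applies Theorem~8.32 to transport the question to $\mathcal{C}_{\rho^\vee}^B$, applies Proposition~8.39 there to obtain $\mathrm{Ext}^\bullet\left(\mathbf{1},\mathbf{1}\right) \cong \mathbb{C}[M_H(\rho^\vee)]$, and then observes that the Higgs branch of the mirror theory defined by $\rho^\vee$ is the $A_{n-1}$ singularity $\mathbb{C}^2/\mathbb{Z}_n$, computed directly as a holomorphic symplectic quotient. If you invoke the two cited results correctly, your entire generators-and-relations analysis becomes redundant --- the presentation $\mathbb{C}[x,y,z]/(xy-z^{n})$ enters at most as an explicit description of $\mathbb{C}[\mathbb{C}^2/\mathbb{Z}_n]$ at the very end; if you do not invoke them, then the existence of your classes, the relation, and the completeness statement are all unproven, and the consistency check against the BFN Coulomb branch cannot substitute for a proof.
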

\begin{proof}
    It is shown in~\cite{ballin3dMirrorSymmetry2023}, Theorem~8.32 that $\mathcal{C}_\rho^A \cong \mathcal{C}_{\rho^\vee}^B$, where $\mathcal{C}_{\rho^\vee}^B$ the category of modules for yet another VOA $V_{\rho^\vee}^B$ (see also Remark~\ref{rem:mirror}). Moreover, according to Proposition~8.39 the self-extensions of the tensor unit in this category is the coordinate ring on the \emph{Higgs branch} of the theory determined by $\rho^\vee$, see~\eqref{eq:higgs} and the subsequent discussion. Unpacking the definitions, this Higgs branch is the $A_{n-1}$ singularity.
\end{proof}
Thus, we can extract very concretely the symplectic dual pair $\overline{\Omin(\sli_n)}$ and $A_{n-1}$ singularity from the VOA $L_1(\psl_{n|n})$.

\appendix
\section{}\label{app:computationsforsingular}
We outline in this appendix the computations necessary to demonstrate that the vector $\chi$ is annihilated by the elements of $\widehat{\mathfrak{n}}_+\subset \V$. For degree zero elements, we only have operators $E^{i,j}_{(0)}$ with $i<j$ and so as suggested in the proof to Proposition \ref{singularvectorprop}, we only need to check the following:
\begin{align*}
    &&E_{(0)}^{2n-1,2n}\chi&=(-E^{1,2n}_{(-1)}E^{1,2n}_{(-1)}+E^{1,2n-1}_{(-1)}E^{2n-1,2n}_{(0)}E^{1,2n}_{(-1)})\ket{0}&\\
    &&&=0.&
\end{align*}
For elements of positive degree $m>0$, we need to consider the following where $j\neq 1$ and $i\neq 1, 2n,2n-1$ we can compute:
\begin{align*}
    &&E^{2n-1,j}_{(m)}\chi&=(-E^{1,j}_{(m-1)}E^{1,2n}_{(-1)}\pm E^{1,2n-1}_{(-1)}E^{2n-1,j}_{(m)}E^{1,2n}_{(-1)})\ket{0}&\\
    &&&=0&\\
    &&E^{2n,j}_{(m)}\chi&=\pm E^{1,2n-1}_{(-1)}E^{2n,j}_{(m)}E^{1,2n}_{(-1)}\ket{0}&\\
    &&&=\mp E^{1,2n-1}_{(-1)}E^{1,j}_{(m-1)}\ket{0}&\\
    &&&=0&\\
    &&E^{i,1}_{(m)}\chi&=(E^{i,2n-1}_{(m-1)}E^{1,2n}_{(-1)}\pm E^{1,2n-1}_{(-1)}E^{i,1}_{(m)}E^{1,2n}_{(-1)})\ket{0}&\\
    &&&=\pm E^{1,2n-1}_{(-1)}E^{i,2n}_{(m-1)}\ket{0}&\\
    &&&=0&\\
     &&E^{2n-1,1}_{(m)}\chi&=(D^{2n-1,1}_{(m-1)}E^{1,2n}_{(-1)}-\delta_{m,1}E^{1,2n}_{(-1)}-E^{1,2n-1}_{(-1)}E^{2n-1,1}_{(m)}E^{1,2n}_{(-1)})\ket{0}&\\
    &&&=(E^{1,2n}_{(m-2)}-\delta_{m,1}E^{1,2n}_{(-1)}-E^{1,2n-1}_{(-1)}E^{2n-1,2n}_{(m-1)})\ket{0}&\\
    &&&=0&
\end{align*}
where we note the choice of $\pm$ in the above depends on the values of $i$ and $j$ which determines whether the corresponding element of $\widehat{\mathfrak{n}}_+$ is even or odd. We also have
\begin{align*}
    &&E^{2n,1}_{(m)}\chi&=(E^{2n,2n-1}_{(m-1)}E^{1,2n}_{(-1)}-E^{1,2n-1}_{(-1)}E^{2n,1}_{(m)}E^{1,2n}_{(-1)})\ket{0}&\\
    &&&=(-E^{1,2n-1}_{(m-2)}-E^{1,2n-1}_{(-1)}D^{2n,1}_{(m-1)}+\delta_{m,1}E^{1,2n-1}_{(-1)})\ket{0}&\\
    &&&=0&\\
    &&H^{2n-2,2n-1}_{(m)}\chi&=(E^{1,2n-1}_{(m-1)}E^{1,2n}_{(-1)}+E^{1,2n-1}_{(-1)}H^{2n-2,2n-1}_{(m)}E^{1,2n}_{(-1)})\ket{0}&\\
    &&&=0 &\\
    &&H^{2n-1,2n}_{(m)}\chi&=(-E^{1,2n-1}_{(m-1)}E^{1,2n}_{(-1)}+E^{1,2n-1}_{(-1)}H^{2n-1,2n}_{(m)}E^{1,2n}_{(-1)})\ket{0}&\\
    &&&=E^{1,2n-1}_{(-1)}E^{1,2n}_{(m-1)}\ket{0}&\\
    &&&=0&\\
    &&H^{1,2}_{(m)}\chi&=(E^{1,2n-1}_{(m-1)}E^{1,2n}_{(-1)}+E^{1,2n-1}_{(-1)}H^{1,2}_{(m)}E^{1,2n}_{(-1)})]\ket{0}&\\
    &&&=E^{1,2n-1}_{(-1)}E^{1,2n}_{(m-1)}\ket{0}&\\
    &&&=0.&
\end{align*}

\section{}\label{app:B}
In this appendix, we discuss the sheets $\overline{\Sh(\sli_n)}$ and their relations to the minimal nilpotent orbits $\overline{\Omin (\sli_n)}$. Recall from Section~\ref{subsubsec:sheetsA} that since the minimal nilpotent orbit of type A corresponds to the partition $\alpha=(2,1,\dots,1)$, then the Levi subalgebra entering the definition of the sheet is determined by the conjugate partition $\overline{\alpha}=(n-1,1)$. Let
\begin{align*}
    &&h_k&=\mathrm{diag}\big(\underbrace{0,\dots,0}_{k-1},1,-1,\underbrace{0,\dots,0}_{n-k-1}\big)&
\end{align*}
be the standard basis for $\mathfrak{h}\subset\sli_n$. The centre of the Levi subalgebra corresponding to the partition $\overline{\alpha}$ can be expressed explicitly as
\begin{align*}
    &&\mathfrak{z}(\levi)&=\left\lbrace Y_1\sum_{i=1}^{n-1} (n-i) h_{i}\mid Y_1\in\C\right\rbrace&\\
    &&&=\left\lbrace \left(\begin{array}{cccc}
        Y_1(n-1) & 0 & \dots & 0  \\
        0 & -Y_1 & & \vdots\\
        \vdots &   & \ddots & \vdots \\
        0 & \dots & \dots & -Y_1
    \end{array} \right) \Bigg| Y_1\in\C\right\rbrace.&
\end{align*}
The nilradical for the parabolic subalgebra containing $\levi$ is given explicitly by 
\begin{align*}
        &&\para_u=\sum_{i=1}^{n-1} \g^{\sum_{j=1}^i \alpha_j}&=\left\lbrace\left(\begin{array}{cccc}
        0 & Y_2 & \dots & Y_n  \\
        \vdots & \ddots &  & 0 \\
        \vdots  & &  \ddots & \vdots \\
        0 & \dots & \dots & 0 
    \end{array} \right)\Bigg| Y_i\in\C  \right\rbrace&
    \end{align*}
   Applying Lemma \ref{sheetlemma}, we arrive at the following description for the sheet of type A containing $\overline{\Omin(\sli_n)}$.
   \begin{align*} 
    && \overline{\Sh(\sli_n)}&=\left\lbrace R\left(\begin{array}{cccc}
        Y_1(n-1) & Y_2 & \dots & Y_n  \\
        0 & -Y_1&  & 0 \\
        \vdots  & &  \ddots & \vdots \\
        0 & \dots & \dots & -Y_1
    \end{array} \right)R^{-1}\Bigg| Y_i\in\C,\; R\in\mathrm{SL}_n \right\rbrace
\end{align*}

As mentioned in Section~\ref{subsubsec:sheetsA}, $\overline{\Omin(\sli_n)}$ is the zero-locus of the functions in $\mathbb{C}[\sli_n]$ given by the $2\times2$ matrix minors. In Section~4 of \cite{weymanEquationsConjugacyClasses1989} it was shown that the space of $2\times 2$ minors can be expressed as
\begin{align}\label{minors}
    &&\wedge^2E^*\otimes\wedge^2E&\cong V_{1,2}\oplus U_{2,2}&
\end{align}
where $E=\C^n$, $U_{2,2}$ is the irreducible $SL_n$ representation generated by the minor $Z_{1,n}Z_{2,n-1}-Z_{1,n-1}Z_{2,n}$ and $V_{1,2}$ is the unique $SL_n$-invariant complement to $U_{2,2}$. The latter is shown in \cite{weymanEquationsConjugacyClasses1989},~Section~4 to be isomorphic to $E^*\otimes E$, with the image of a basis element $e_i^* \otimes e_j$ under this isomorphism
\begin{equation*}
e_i^* \otimes e_j \mapsto \sum_{k=1}^{n} Z_{i,j}Z_{k,k} - Z_{k,j}Z_{i,k}.
\end{equation*}
It is in fact generated by elements of the form $e_i^* \otimes e_i$ as an $SL_n$ representations, and it can be checked that these elements do not vanish on the sheet. We now show that the elements of $U_{2,2}$ vanish on both $\overline{\Omin(\sli_n)}$ and $\overline{\Sh(\sli_n)}$. 
\begin{proposition}\label{vanishing}
 For any $n>3$, $f\in U_{2,2}$ and $Z\in\overline{\Sh(\sli_n)}$ we have $f(Z)=0$.
\end{proposition}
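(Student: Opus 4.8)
The plan is to exploit the $SL_n$-equivariance of the whole setup to reduce the claim to the vanishing of a \emph{single} generator of $U_{2,2}$, and then to read that vanishing off from the structural description of the sheet obtained above. First I would observe that the subspace
\[
W = \{ f \in U_{2,2} \mid f|_{\overline{\Sh(\sli_n)}} = 0 \}
\]
is an $SL_n$-subrepresentation of $U_{2,2}$. Indeed, $\overline{\Sh(\sli_n)}$ is stable under the conjugation (adjoint) action of $SL_n$, so the ideal of functions vanishing on it is $SL_n$-invariant, and intersecting with the invariant subspace $U_{2,2}$ gives an invariant subspace. Since $U_{2,2}$ is irreducible as an $SL_n$-module, $W$ is either $0$ or all of $U_{2,2}$. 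Hence it suffices to produce a \emph{single} nonzero element of $U_{2,2}$ vanishing identically on $\overline{\Sh(\sli_n)}$, and the natural candidate is the generator $g(Z) = Z_{1,n}Z_{2,n-1} - Z_{1,n-1}Z_{2,n}$.

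The key structural input is that every element of the sheet is a scalar matrix plus a matrix of rank at most one. Taking the explicit representative $M$ from the description of $\overline{\Sh(\sli_n)}$ in this appendix and adding $Y_1 I$, the lower diagonal entries cancel and only the first row survives, so $M = -Y_1 I + e_1 w^{\top}$ with $e_1$ the first standard basis vector and $w = (nY_1, Y_2, \dots, Y_n)^{\top}$; this is a scalar matrix plus a rank-one matrix. Conjugation by $R \in SL_n$ preserves this shape, $RMR^{-1} = -Y_1 I + (Re_1)(R^{-\top}w)^{\top}$, so every $Z \in \Sh(\sli_n)$ can be written $Z = \mu I + u v^{\top}$ for some $\mu \in \C$ and $u, v \in \C^n$.

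I would then evaluate $g$ on such a $Z$. The crucial point, and the reason the hypothesis $n>3$ enters, is that the row indices $\{1,2\}$ and column indices $\{n-1,n\}$ of $g$ are \emph{disjoint} exactly when $n>3$, so none of the four entries $Z_{1,n}, Z_{2,n-1}, Z_{1,n-1}, Z_{2,n}$ is a diagonal entry. The scalar term $\mu I$ therefore contributes nothing to any of them, each equals the corresponding entry $u_i v_j$ of the rank-one matrix $uv^{\top}$, and the minor collapses to $u_1 v_n u_2 v_{n-1} - u_1 v_{n-1} u_2 v_n = 0$. Thus $g$ vanishes on the dense subset $\Sh(\sli_n)$, hence on its Zariski closure $\overline{\Sh(\sli_n)}$. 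Combined with the first step this gives $W = U_{2,2}$, and the proposition follows.

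I expect the only genuine subtlety to be resisting the temptation to verify the claim merely by evaluating $g$ on the standard representative $M$: because $g$ is not conjugation-invariant, its vanishing on $M$ alone says nothing about the rest of the orbit. The conjugation-stable ``scalar plus rank-one'' normal form is precisely what circumvents this, converting the orbit-wide vanishing into the elementary fact that a rank-one matrix has vanishing $2 \times 2$ minors, provided the chosen rows and columns avoid the diagonal, which is guaranteed for $n>3$.
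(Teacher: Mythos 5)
Your proof is correct and follows essentially the same route as the paper's: write every element of $\overline{\Sh(\sli_n)}$ as a scalar matrix plus a rank-one matrix, evaluate the single generator $g(Z) = Z_{1,n}Z_{2,n-1} - Z_{1,n-1}Z_{2,n}$ (whose four entries avoid the diagonal precisely because $n>3$), and upgrade the vanishing of $g$ to all of $U_{2,2}$ via $SL_n$-invariance of the sheet and irreducibility of $U_{2,2}$. If anything, you are more explicit than the paper about where the hypothesis $n>3$ enters: the paper's phrase ``since $uv^T$ is at most a rank 1 matrix we have $g(Z)=0$'' silently relies on the fact that the scalar part $a\,\mathrm{Id}$ contributes nothing to those four off-diagonal entries.
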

\begin{proof}
Set $a=-Y_1$ and define two vectors $v,u \in E$ as $v^T = (nY_1,Y_2,\dots Y_n)R^{-1}$, $u=(R_{1,1},\dots R_{n,1})^T$. Then we have
\begin{align*}
    &&Z&=a\mathrm{Id}+uv^T.&
\end{align*}
Let $g=Z_{1,n}Z_{2,n-1}-Z_{1,n-1}Z_{2,n}$ and note that since $uv^T$ is at most a rank 1 matrix we have $g(Z)=0$. Note that since $\overline{\Sh(\sli_n)}$ is $SL_n$-invariant, the $2\times 2$ minors that vanish on $\overline{\Sh(\sli_n)}$ form a subrepresentation of $\wedge^2E^*\otimes\wedge^2 E$. It follows that the elements of the subrepresentation generated by $g$ vanish on $\overline{\Sh(\sli_n)}$. As stated earlier, $g$ generates the irreducible $SL_n$ representation $U_{2,2}$ and so for all $f\in U_{2,2}$ we have $f(x)=0$.
\end{proof}
\printbibliography

@article{voronov1993semi,
  title={Semi-infinite homological algebra},
  author={Voronov, Alexander A},
  journal={Inventiones mathematicae},
  volume={113},
  number={1},
  pages={103--146},
  year={1993},
  publisher={Springer}
}

@article{Dedushenko:2023cvd,
    author = "Dedushenko, Mykola",
    title = "{On the 4d/3d/2d view of the SCFT/VOA correspondence}",
    eprint = "2312.17747",
    archivePrefix = "arXiv",
    primaryClass = "hep-th",
    month = "12",
    year = "2023"
}

@article{Coman:2023xcq,
    author = "Coman, Ioana and Shim, Myungbo and Yamazaki, Masahito and Zhou, Yehao",
    title = "{Affine $\mathcal{W}$-algebras and Miura maps from 3d $\mathcal N=4$ non-Abelian quiver gauge theories}",
    eprint = "2312.13363",
    archivePrefix = "arXiv",
    primaryClass = "hep-th",
    month = "12",
    year = "2023"
}

@article{Braverman:2016wma,
    author = "Braverman, Alexander and Finkelberg, Michael and Nakajima, Hiraku",
    title = "{Towards a mathematical definition of Coulomb branches of $3$-dimensional $\mathcal{N} = 4$ gauge theories, II}",
    eprint = "1601.03586",
    archivePrefix = "arXiv",
    primaryClass = "math.RT",
    doi = "10.4310/ATMP.2018.v22.n5.a1",
    journal = "Adv. Theor. Math. Phys.",
    volume = "22",
    pages = "1071--1147",
    year = "2018"
}

@article{Beem:2017ooy,
    author = "Beem, Christopher and Rastelli, Leonardo",
    title = "{Vertex operator algebras, Higgs branches, and modular differential equations}",
    eprint = "1707.07679",
    archivePrefix = "arXiv",
    primaryClass = "hep-th",
    reportNumber = "YITP-SB-17-27",
    doi = "10.1007/JHEP08(2018)114",
    journal = "JHEP",
    volume = "08",
    pages = "114",
    year = "2018"
}

@online{arakawaRemarkC_2Cofiniteness2010,
  title = {A remark on the $C_2$-cofiniteness condition on vertex algebras},
  author = {Arakawa, Tomoyuki},
  date = {2010-10-23},
  eprint = {1004.1492},
  eprinttype = {arXiv},
  eprintclass = {math},
  doi = {10.48550/arXiv.1004.1492},
  urldate = {2023-10-25},
  abstract = {We show that a finitely strongly generated, non-negatively graded vertex algebra \$V\$ is \$C\_2\$-cofinite if and only if it is lisse in the sense of Beilinson, Feigin and Mazur. This shows that the \$C\_2\$-cofiniteness is indeed a natural finiteness condition.},
  pubstate = {prepublished},
  keywords = {Mathematics - Quantum Algebra},
  file = {/Users/aidensuter/Zotero/storage/SJ3645DW/Arakawa - 2010 - A remark on the $C_2$-cofiniteness condition on ve.pdf;/Users/aidensuter/Zotero/storage/MIFUWDLM/1004.html}
}

@online{beemFreeFieldRealisation2023,
  title = {Free field realisation of boundary vertex algebras for {{Abelian}} gauge theories in three dimensions},
  author = {Beem, Christopher and Ferrari, Andrea E. V.},
  date = {2023-04-21},
  eprint = {2304.11055},
  eprinttype = {arXiv},
  eprintclass = {hep-th},
  doi = {10.48550/arXiv.2304.11055},
  urldate = {2023-10-25},
  abstract = {We study the boundary vertex algebras of \$A\$-twisted \$\textbackslash mathcal\{N\}=4\$ Abelian gauge theories in three dimensions. These are identified with the BRST quotient (semi-infinite cohomology) of collections of symplectic bosons and free fermions that reflect the matter content of the corresponding gauge theory. We develop various free field realisations for these vertex algebras which we propose to interpret in terms of their localisation on their associated varieties. We derive the free field realisations by bosonising the elementary symplectic bosons and free fermions and then calculating the relevant semi-infinite cohomology, which can be done systematically. An interesting feature of our construction is that for certain preferred free field realisations, the outer automorphism symmetry of the vertex algebras in question (which are identified with the symmetries of the Coulomb branch in the infrared) are made manifest.},
  pubstate = {prepublished},
  keywords = {High Energy Physics - Theory,Mathematics - Quantum Algebra},
  file = {/Users/aidensuter/Zotero/storage/NXKDWPJ2/Beem and Ferrari - 2023 - Free field realisation of boundary vertex algebras.pdf;/Users/aidensuter/Zotero/storage/QFVSFXKT/2304.html}
}

@online{arakawaSheetsAssociatedVarieties2019,
  title = {Sheets and associated varieties of affine vertex algebras},
  author = {Arakawa, Tomoyuki and Moreau, Anne},
  date = {2019-03-13},
  eprint = {1601.05906},
  eprinttype = {arXiv},
  eprintclass = {math-ph},
  doi = {10.48550/arXiv.1601.05906},
  urldate = {2023-10-25},
  abstract = {We show that sheet closures appear as associated varieties of affine vertex algebras. Further, we give new examples of non-admissible affine vertex algebras whose associated variety is contained in the nilpotent cone. We also prove some conjectures from our previous paper and give new examples of lisse affine W-algebras.},
  pubstate = {prepublished},
  keywords = {81R10 17B08 14L30,Mathematical Physics,Mathematics - Quantum Algebra,Mathematics - Representation Theory},
  file = {/Users/aidensuter/Zotero/storage/DXAAEMDD/Arakawa and Moreau - 2019 - Sheets and associated varieties of affine vertex a.pdf;/Users/aidensuter/Zotero/storage/RERY45EQ/1601.html}
}

@online{kuwabaraVertexAlgebrasAssociated2017,
  title = {Vertex algebras associated with hypertoric varieties},
  author = {Kuwabara, Toshiro},
  date = {2017-06-07},
  eprint = {1706.02203},
  eprinttype = {arXiv},
  eprintclass = {math},
  doi = {10.48550/arXiv.1706.02203},
  urldate = {2023-10-25},
  abstract = {We construct a family of vertex algebras associated with a family of symplectic singularity/resolution, called hypertoric varieties. While the hypertoric varieties are constructed by a certain Hamiltonian reduction associated with a torus action, our vertex algebras are constructed by (semi-infinite) BRST reduction. The construction works algebro-geometrically and we construct sheaves of \$\textbackslash hbar\$-adic vertex algebras over hypertoric varieties which localize the vertex algebras. We show when the vertex algebras are vertex operator algebras by giving explicit conformal vectors. We also show that the Zhu algebras of the vertex algebras, associative algebras associated with non-negatively graded vertex algebras, gives a certain family of filtered quantizations of the coordinate rings of the hypertoric varieties.},
  pubstate = {prepublished},
  keywords = {17B69 17B68 17B63,Mathematics - Algebraic Geometry,Mathematics - Quantum Algebra},
  file = {/Users/aidensuter/Zotero/storage/KITSH2ZJ/Kuwabara - 2017 - Vertex algebras associated with hypertoric varieti.pdf;/Users/aidensuter/Zotero/storage/BETJGEHL/1706.html}
}

@online{costelloHiggsCoulombBranches2018,
  title = {Higgs and {{Coulomb}} branches from vertex operator algebras},
  author = {Costello, Kevin and Creutzig, Thomas and Gaiotto, Davide},
  date = {2018-11-09},
  eprint = {1811.03958},
  eprinttype = {arXiv},
  eprintclass = {hep-th},
  doi = {10.48550/arXiv.1811.03958},
  urldate = {2023-10-25},
  abstract = {We formulate a conjectural relation between the category of line defects in topologically twisted 3d \$\{\textbackslash cal N\} = 4\$ supersymmetric quantum field theories and categories of modules for Vertex Operator Algebras of boundary local operators for the theories. We test the conjecture in several examples and provide some partial proofs for standard classes of gauge theories.},
  pubstate = {prepublished},
  keywords = {High Energy Physics - Theory,Mathematics - Algebraic Geometry,Mathematics - Quantum Algebra,Mathematics - Representation Theory},
  file = {/Users/aidensuter/Zotero/storage/SQUUE4KS/Costello et al. - 2018 - Higgs and Coulomb branches from vertex operator al.pdf;/Users/aidensuter/Zotero/storage/LDM7MYKG/1811.html}
}

@online{arakawaChiralAlgebrasClass2018,
  title = {Chiral algebras of class $\mathcal{S}$ and {{Moore-Tachikawa}} symplectic varieties},
  author = {Arakawa, Tomoyuki},
  date = {2018-11-05},
  abstract = {We give a functorial construction of the genus zero chiral algebras of class \$\textbackslash mathcal\{S\}\$, that is, the vertex algebras corresponding to the theory of class \$\textbackslash mathcal\{S\}\$ associated with genus zero pointed Riemann surfaces via the 4d/2d duality discovered by Beem, Lemos, Liendo, Peelaers, Rastelli and van Rees in physics. We show that there is a unique family of vertex algebras satisfying the required conditions and show that they are all simple and conformal. In fact, our construction works for any complex semisimple group G that is not necessarily simply laced. Furthermore, we show that the associated varieties of these vertex algebras are exactly the genus zero Moore-Tachikawa symplectic varieties that have been recently constructed by Braverman, Finkelberg and Nakajima using the geometry of the affine Grassmannian for the Langlands dual group.},
  organization = {arXiv.org},
  file = {/Users/aidensuter/Zotero/storage/G8TANQHD/Arakawa - 2018 - Chiral algebras of class $mathcal S $ and Moore-T.pdf}
}

@book{frenkelVertexAlgebrasAlgebraic2004,
  title = {Vertex algebras and algebraic curves},
  author = {Frenkel, Edward and Ben-Zvi, David},
  date = {2004},
  series = {Mathematical surveys and monographs},
  edition = {2},
  volume = {88},
  publisher = {American Mathematical Society},
  isbn = {978-0-8218-3674-3},
  keywords = {Curves Algebraic,Vertex operator algebras}
}

@online{hendersonSingularitiesNilpotentOrbit2015,
  title = {Singularities of nilpotent orbit closures},
  author = {Henderson, Anthony},
  date = {2015-01-26},
  eprint = {1408.3888},
  eprinttype = {arXiv},
  eprintclass = {math},
  doi = {10.48550/arXiv.1408.3888},
  urldate = {2024-05-07},
  abstract = {This is an expository article on the singularities of nilpotent orbit closures in simple Lie algebras over the complex numbers. It is slanted towards aspects that are relevant for representation theory, including Maffei's theorem relating Slodowy slices to Nakajima quiver varieties in type A. There is one new observation: the results of Juteau and Mautner, combined with Maffei's theorem, give a geometric proof of a result on decomposition numbers of Schur algebras due to Fang, Henke and Koenig.},
  pubstate = {prepublished},
  keywords = {14B05 17B08 (Primary) 14E15 20G05 (Secondary),Mathematics - Algebraic Geometry,Mathematics - Representation Theory},
  file = {/Users/aidensuter/Zotero/storage/SV96222M/Henderson - 2015 - Singularities of nilpotent orbit closures.pdf;/Users/aidensuter/Zotero/storage/UGUSTYAH/1408.html}
}

@book{tauvelLieAlgebrasAlgebraic2005,
  title = {Lie {{Algebras}} and {{Algebraic Groups}}},
  author = {Tauvel, Patrice and Yu, Rupert},
  date = {2005},
  series = {Springer {{Monographs}} in {{Mathematics}}},
  edition = {1},
  publisher = {Springer Berlin, Heidelberg},
  isbn = {978-3-540-24170-6},
  file = {/Users/aidensuter/Zotero/storage/RQRBT6UV/b139060.html}
}

@online{fehilyInverseReductionHooktype2023,
  title = {Inverse reduction for hook-type {{W-algebras}}},
  author = {Fehily, Zachary},
  date = {2023-06-26},
  eprint = {2306.14673},
  eprinttype = {arXiv},
  eprintclass = {hep-th, physics:math-ph},
  doi = {10.48550/arXiv.2306.14673},
  urldate = {2024-05-08},
  abstract = {Originating in the work of A.M. Semikhatov and D. Adamovi\textbackslash 'c, inverse reductions are embeddings involving W-algebras corresponding to the same Lie algebra but different nilpotent orbits. Here, we show that an inverse reduction embedding between the affine \$\textbackslash mathfrak\{sl\}\_\{n+1\}\$ vertex operator algebra and the minimal \$\textbackslash mathfrak\{sl\}\_\{n+1\}\$ W-algebra exists. This generalises the realisations for \$n=1,2\$ in [arXiv:1711.11342, arXiv:2110.15203]. A similar argument is then used to show that inverse reduction embeddings exists between all hook-type \$\textbackslash mathfrak\{sl\}\_\{n+1\}\$ W-algebras, which includes the principal/regular, subregular, minimal \$\textbackslash mathfrak\{sl\}\_\{n+1\}\$ W-algebras, and the affine \$\textbackslash mathfrak\{sl\}\_\{n+1\}\$ vertex operator algebra. This generalises the regular-to-subregular inverse reduction of [arXiv:2111.05536], and similarly uses free-field realisations and their associated screening operators.},
  pubstate = {prepublished},
  keywords = {High Energy Physics - Theory,Mathematical Physics,Mathematics - Quantum Algebra,Mathematics - Representation Theory},
  file = {/Users/aidensuter/Zotero/storage/NUK69PKU/Fehily - 2023 - Inverse reduction for hook-type W-algebras.pdf;/Users/aidensuter/Zotero/storage/9CYAD553/2306.html}
}

@online{adamovicConformalEmbeddingsAffine2019,
  title = {Conformal embeddings in affine vertex superalgebras},
  author = {Adamović, Dražen and Frajria, Pierluigi Möseneder and Papi, Paolo and Perše, Ozren},
  date = {2019-11-05},
  eprint = {1903.03794},
  eprinttype = {arXiv},
  eprintclass = {math-ph},
  doi = {10.48550/arXiv.1903.03794},
  urldate = {2024-05-08},
  abstract = {This paper is a natural continuation of our previous work on conformal embeddings of vertex algebras [6], [7], [8]. Here we consider conformal embeddings in simple affine vertex superalgebra \$V\_k(\textbackslash mathfrak g)\$ where \$\textbackslash mathfrak g=\textbackslash mathfrak g\_\{\textbackslash bar 0\}\textbackslash oplus \textbackslash mathfrak g\_\{\textbackslash bar 1\}\$ is a basic classical simple Lie superalgebras. Let \$\textbackslash mathcal V\_k (\textbackslash mathfrak g\_\{\textbackslash bar 0\})\$ be the subalgebra of \$V\_k(\textbackslash mathfrak g)\$ generated by \$\textbackslash mathfrak g\_\{\textbackslash bar 0\}\$. We first classify all levels \$k\$ for which the embedding \$\textbackslash mathcal V\_k (\textbackslash mathfrak g\_\{\textbackslash bar 0\})\$ in \$V\_k(\textbackslash mathfrak g)\$ is conformal. Next we prove that, for a large family of such conformal levels, \$V\_k(\textbackslash mathfrak g)\$ is a completely reducible \$\textbackslash mathcal V\_k (\textbackslash mathfrak g\_\{\textbackslash bar 0\})\$--module and obtain decomposition rules. Proofs are based on fusion rules arguments and on the representation theory of certain affine vertex algebras. The most interesting case is the decomposition of \$V\_\{-2\} (osp(2n +8 \textbackslash vert 2n))\$ as a finite, non simple current extension of \$V\_\{-2\} (D\_\{n+4\}) \textbackslash otimes V\_1 (C\_n)\$. This decomposition uses our previous work [10] on the representation theory of \$V\_\{-2\} (D\_\{n+4\})\$.},
  pubstate = {prepublished},
  keywords = {Mathematical Physics,Mathematics - Quantum Algebra,Mathematics - Representation Theory},
  file = {/Users/aidensuter/Zotero/storage/8T2H4STR/Adamović et al. - 2019 - Conformal embeddings in affine vertex superalgebra.pdf;/Users/aidensuter/Zotero/storage/FBIFGYI7/1903.html}
}

@online{gorelikSimplicityVacuumModules2006,
  title = {On simplicity of vacuum modules},
  author = {Gorelik, M. and Kac, V.},
  date = {2006-06-14},
  eprint = {math-ph/0606002},
  eprinttype = {arXiv},
  doi = {10.48550/arXiv.math-ph/0606002},
  abstract = {We find necessary and sufficient conditions of irreducibility of vacuum modules over affine Lie algebras and superalgebras. From this we derive conditions of simplicity of minimal W-algebras. Moreover, in the case of Virasoro and Neveu-Schwarz algebras we obtain explicit formulas for vacuum determinants.},
  pubstate = {prepublished},
  keywords = {17B67 17B68 17B69,Mathematical Physics},
  file = {/Users/aidensuter/Zotero/storage/K8XXXSC7/Gorelik and Kac - 2006 - On simplicity of vacuum modules.pdf;/Users/aidensuter/Zotero/storage/R54YXP2V/0606002.html}
}

@article{creutzigTensorCategoriesAffine2021,
  title = {Tensor categories of affine {{Lie}} algebras beyond admissible levels},
  author = {Creutzig, Thomas and Yang, Jinwei},
  date = {2021-08},
  journaltitle = {Mathematische Annalen},
  shortjournal = {Math. Ann.},
  volume = {380},
  number = {3-4},
  eprint = {2002.05686},
  eprinttype = {arXiv},
  eprintclass = {math},
  pages = {1991--2040},
  issn = {0025-5831, 1432-1807},
  doi = {10.1007/s00208-021-02159-w},
  urldate = {2024-05-14},
  abstract = {We show that if \$V\$ is a vertex operator algebra such that all the irreducible ordinary \$V\$-modules are \$C\_1\$-cofinite and all the grading-restricted generalized Verma modules for \$V\$ are of finite length, then the category of finite length generalized \$V\$-modules has a braided tensor category structure. By applying the general theorem to the simple affine vertex operator algebra (resp. superalgebra) associated to a finite simple Lie algebra (resp. Lie superalgebra) \$\textbackslash mathfrak\{g\}\$ at level \$k\$ and the category \$KL\_k(\textbackslash mathfrak\{g\})\$ of its finite length generalized modules, we discover several families of \$KL\_k(\textbackslash mathfrak\{g\})\$ at non-admissible levels \$k\$, having braided tensor category structures. In particular, \$KL\_k(\textbackslash mathfrak\{g\})\$ has a braided tensor category structure if the category of ordinary modules is semisimple or more generally if the category of ordinary modules is of finite length. We also prove the rigidity and determine the fusion rules of some categories \$KL\_k(\textbackslash mathfrak\{g\})\$, including the category \$KL\_\{-1\}(\textbackslash mathfrak\{sl\}\_n)\$. Using these results, we construct a rigid tensor category structure on a full subcategory of \$KL\_1(\textbackslash mathfrak\{sl\}(n|m))\$ consisting of objects with semisimple Cartan subalgebra actions.},
  keywords = {17B65 17B69 18D10,Mathematics - Quantum Algebra,Mathematics - Representation Theory},
  file = {/Users/aidensuter/Zotero/storage/28B7YJ2G/Creutzig and Yang - 2021 - Tensor categories of affine Lie algebras beyond ad.pdf;/Users/aidensuter/Zotero/storage/S5XEWLRL/2002.html}
}

@online{ballin3dMirrorSymmetry2023,
  title = {3d mirror symmetry of braided tensor categories},
  author = {Ballin, Andrew and Creutzig, Thomas and Dimofte, Tudor and Niu, Wenjun},
  date = {2023-04-21},
  eprint = {2304.11001},
  eprinttype = {arXiv},
  eprintclass = {hep-th},
  doi = {10.48550/arXiv.2304.11001},
  urldate = {2024-05-14},
  abstract = {We study the braided tensor structure of line operators in the topological A and B twists of abelian 3d \$\textbackslash mathcal\{N\}=4\$ gauge theories, as accessed via boundary vertex operator algebras (VOA's). We focus exclusively on abelian theories. We first find a non-perturbative completion of boundary VOA's in the B twist, which start out as certain affine Lie superalebras; and we construct free-field realizations of both A and B-twist VOA's, finding an interesting interplay with the symmetry fractionalization group of bulk theories. We use the free-field realizations to establish an isomorphism between A and B VOA's related by 3d mirror symmetry. Turning to line operators, we extend previous physical classifications of line operators to include new monodromy defects and bound states. We also outline a mechanism by which continuous global symmetries in a physical theory are promoted to higher symmetries in a topological twist -- in our case, these are infinite one-form symmetries, related to boundary spectral flow, which structure the categories of lines and control abelian gauging. Finally, we establish the existence of braided tensor structure on categories of line operators, viewed as non-semisimple categories of modules for boundary VOA's. In the A twist, we obtain the categories by extending modules of symplectic boson VOA's, corresponding to gauging free hypermultiplets; in the B twist, we instead extend Kazhdan-Lusztig categories for affine Lie superalgebras. We prove braided tensor equivalences among the categories of 3d-mirror theories. All results on VOA's and their module categories are mathematically rigorous; they rely strongly on recently developed techniques to access non-semisimple extensions.},
  pubstate = {prepublished},
  keywords = {High Energy Physics - Theory,Mathematics - Quantum Algebra,Mathematics - Representation Theory},
  file = {/Users/aidensuter/Zotero/storage/78XIJ64K/Ballin et al. - 2023 - 3d mirror symmetry of braided tensor categories.pdf;/Users/aidensuter/Zotero/storage/7GQAEMM7/2304.html}
}

@online{adamovicVertexAlgebrasRelated2018,
  title = {On some vertex algebras related to $V_{-1}(\mathfrak{sl}(n))$ and their characters},
  author = {Adamovic, Drazen and Milas, Antun},
  date = {2018-10-02},
  eprint = {1805.09771},
  eprinttype = {arXiv},
  eprintclass = {math},
  doi = {10.48550/arXiv.1805.09771},
  urldate = {2024-05-14},
  abstract = {We consider several vertex operator (super)algebras closely related to \$V\_\{-1\}(\textbackslash frak\{sl\} (n) )\$, \$n \textbackslash ge 3\$ : (a) the parafermionic subalgebra \$K(\textbackslash frak\{sl\}(n),-1)\$ for which we completely describe its inner structure, (b) the vacuum algebra \$\textbackslash Omega (V\_\{-1\}(\textbackslash frak\{sl\} (n) ) )\$, and (c) an infinite extension \$\textbackslash mathcal U\$ of \$V\_\{-1\}(\textbackslash frak\{sl\} (n) )\$ constructed by combining certain irreducible ordinary modules with integral weights. It turns out that \$\textbackslash mathcal U\$ is isomorphic to the coset vertex algebra \$\textbackslash frak\{psl\}(n|n) \_1 / \textbackslash frak\{sl\}(n)\_1\$, \$n \textbackslash ge 3\$. We show that \$V\_\{-1\}(\textbackslash frak\{sl\}(n))\$ admits precisely \$n\$ ordinary irreducible modules, up to isomorphism. This leads to the conjecture that \$\{\textbackslash mathcal U\}\$ is \{\textbackslash em quasi-lisse\}. We present evidence in support of this conjecture: we prove that the (super)character of \$\textbackslash mathcal U\$ is quasi-modular of weight one by virtue of being the constant term of a meromorphic Jacobi form of index zero. Explicit formulas and MLDE for characters and supercharacters are given for \$\textbackslash frak\{g\}=\textbackslash frak\{sl\}(3)\$ and outlined for general \$n\$. We present a conjectural family of 2nd order MLDEs for characters of vertex algebras \$\textbackslash frak\{psl\}(n|n) \_1\$, \$n \textbackslash geq 2\$. We finish with a theorem pertaining to characters of \$\textbackslash frak\{psl\}(n|n)\_1\$ and \$\textbackslash mathcal U\$-modules.},
  pubstate = {prepublished},
  keywords = {Mathematics - Quantum Algebra},
  file = {/Users/aidensuter/Zotero/storage/AD7UVAP3/Adamovic and Milas - 2018 - On some vertex algebras related to $V_ -1 (frak s.pdf;/Users/aidensuter/Zotero/storage/XHLWS3I4/1805.html}
}

@article{wangClassificationIrreducibleModules1998,
  title = {Classification of irreducible modules of $W_3$ algebra with $c = -2$},
  author = {Wang, Weiqiang},
  date = {1998-07-01},
  journaltitle = {Communications in Mathematical Physics},
  shortjournal = {Communications in Mathematical Physics},
  volume = {195},
  number = {1},
  eprint = {q-alg/9708016},
  eprinttype = {arXiv},
  pages = {113--128},
  issn = {0010-3616, 1432-0916},
  doi = {10.1007/s002200050382},
  urldate = {2024-05-15},
  abstract = {We construct irreducible modules V\_\{\textbackslash alpha\}, \textbackslash alpha \textbackslash in \textbackslash C over W\_3 algebra with c = -2 in terms of a free bosonic field. We prove that these modules exhaust all the irreducible modules of W\_3 algebra with c = -2. Highest weights of modules V\_\{\textbackslash alpha\}, \textbackslash alpha \textbackslash in \textbackslash C with respect to the full (two-dimensional) Cartan subalgebra of W\_3 algebra are (\textbackslash alpha(\textbackslash alpha -1)/2, \textbackslash alpha(\textbackslash alpha -1)(2\textbackslash alpha -1)/6). They are parametrized by points (t, w) on a rational curve w\textasciicircum 2 - t\textasciicircum 2 (8t + 1)/9 = 0. Irreducible modules of vertex algebra W\_\{1+\textbackslash infty\} with c = -1 are also classified.},
  keywords = {High Energy Physics - Theory,Mathematics - Quantum Algebra},
  file = {/Users/aidensuter/Zotero/storage/C9TKERJF/Wang - 1998 - Classification of irreducible modules of W_3 algeb.pdf;/Users/aidensuter/Zotero/storage/VDC7ZSLC/9708016.html}
}

@article{frenkelBasicRepresentationsAffine1980,
  title = {Basic representations of affine {{Lie}} algebras and dual resonance models},
  author = {Frenkel, I. B. and Kac, V. G.},
  date = {1980-02-01},
  journaltitle = {Inventiones mathematicae},
  shortjournal = {Invent Math},
  volume = {62},
  number = {1},
  pages = {23--66},
  issn = {1432-1297},
  doi = {10.1007/BF01391662},
  urldate = {2024-05-17},
  langid = {english},
  keywords = {Basic Representation,Dual Resonance,Dual Resonance Model,Resonance Model},
  file = {/Users/aidensuter/Zotero/storage/9ZKCB9ES/Frenkel and Kac - 1980 - Basic representations of affine Lie algebras and d.pdf}
}

@article{segalUnitaryRepresentationsInfinite1981,
  title = {Unitary representations of some infinite dimensional groups},
  author = {Segal, Graeme},
  date = {1981-09-01},
  journaltitle = {Communications in Mathematical Physics},
  shortjournal = {Commun.Math. Phys.},
  volume = {80},
  number = {3},
  pages = {301--342},
  issn = {1432-0916},
  doi = {10.1007/BF01208274},
  urldate = {2024-05-17},
  abstract = {We construct projective unitary representations of (a) Map(S1;G), the group of smooth maps from the circle into a compact Lie groupG, and (b) the group of diffeomorphisms of the circle. We show that a class of representations of Map(S1;T), whereT is a maximal torus ofG, can be extended to representations of Map(S1;G),},
  langid = {english},
  keywords = {Complex System,Neural Network,Nonlinear Dynamics,Quantum Computing,Statistical Physic},
  file = {/Users/aidensuter/Zotero/storage/J8KPDD4K/Segal - 1981 - Unitary representations of some infinite dimension.pdf}
}

@online{gorbounovGerbesChiralDifferential1999,
  title = {Gerbes of chiral differential operators},
  author = {Gorbounov, Vassily and Malikov, Fyodor and Schechtman, Vadim},
  date = {1999-07-18},
  eprint = {math/9906117},
  eprinttype = {arXiv},
  doi = {10.48550/arXiv.math/9906117},
  urldate = {2024-05-17},
  abstract = {In this note we compute the cohomological obstruction to the existence of certain sheaves of vertex algebras on smooth varieties. These sheaves have been introduced and studied in the previous work by A.Vaintrob and two of the authors. Hopefully our result clarifies to some extent the constructions of the above work.},
  pubstate = {prepublished},
  keywords = {Mathematics - Algebraic Geometry},
  file = {/Users/aidensuter/Zotero/storage/FHFEGFST/Gorbounov et al. - 1999 - Gerbes of chiral differential operators.pdf;/Users/aidensuter/Zotero/storage/VR9EV6CA/9906117.html}
}

@article{costelloVertexOperatorAlgebras2019,
  title = {Vertex {{Operator Algebras}} and 3d {{N}}=4 gauge theories},
  author = {Costello, Kevin and Gaiotto, Davide},
  date = {2019-05},
  journaltitle = {Journal of High Energy Physics},
  shortjournal = {J. High Energ. Phys.},
  volume = {2019},
  number = {5},
  eprint = {1804.06460},
  eprinttype = {arXiv},
  eprintclass = {hep-th},
  pages = {18},
  issn = {1029-8479},
  doi = {10.1007/JHEP05(2019)018},
  urldate = {2024-05-17},
  abstract = {We introduce two mirror constructions of Vertex Operator Algebras associated to special boundary conditions in 3d N=4 gauge theories. We conjecture various relations between these boundary VOA's and properties of the (topologically twisted) bulk theories. We discuss applications to the Symplectic Duality and Geometric Langlands programs.},
  keywords = {High Energy Physics - Theory,Mathematics - Quantum Algebra},
  file = {/Users/aidensuter/Zotero/storage/PEPYAIKV/Costello and Gaiotto - 2019 - Vertex Operator Algebras and 3d N=4 gauge theories.pdf;/Users/aidensuter/Zotero/storage/LJ558UT8/1804.html}
}

@online{liRemarksAssociatedVarieties2020,
  title = {Some remarks on associated varieties of vertex operator superalgebras},
  author = {Li, Hao},
  date = {2020-07-10},
  eprint = {2007.04522},
  eprinttype = {arXiv},
  eprintclass = {math-ph},
  doi = {10.48550/arXiv.2007.04522},
  urldate = {2024-05-22},
  abstract = {We study several families of vertex operator superalgebras from a jet (super)scheme point of view. We provide new examples of vertex algebras which are "chiralizations" of their Zhu's Poisson algebras \$R\_V\$. Our examples come from affine \$C\_\textbackslash ell\textasciicircum\{(1)\}\$-series vertex algebras (\$\textbackslash ell \textbackslash geq 1\$), certain \$N=1\$ superconformal vertex algebras, Feigin-Stoyanovsky principal subspaces, Feigin-Stoyanovsky type subspaces, graph vertex algebras \$W\_\{\textbackslash Gamma\}\$, and extended Virasoro vertex algebra. We also give a counterexample to the chiralization property for the \$N=2\$ superconformal vertex algebra of central charge \$1\$.},
  pubstate = {prepublished},
  keywords = {Mathematical Physics,Mathematics - Quantum Algebra},
  file = {/Users/aidensuter/Zotero/storage/FZVWRJQH/Li - 2020 - Some remarks on associated varieties of vertex ope.pdf;/Users/aidensuter/Zotero/storage/MRZNHQPB/2007.html}
}

@article{weymanEquationsConjugacyClasses1989,
  title = {The equations of conjugacy classes of nilpotent matrices},
  author = {Weyman, Jerzy},
  date = {1989-01-06},
  journaltitle = {Inventiones mathematicae},
  shortjournal = {Inventiones mathematicae},
  volume = {98},
  pages = {229--245},
  doi = {10.1007/BF01388851},
  file = {/Users/aidensuter/Zotero/storage/UHUBTK57/Weyman - 1989 - The equations of conjugacy classes of nilpotent ma.pdf}
}

@online{frappatDictionaryLieSuperalgebras1996,
  title = {Dictionary on {{Lie Superalgebras}}},
  author = {Frappat, L. and Sciarrino, A. and Sorba, P.},
  date = {1996-07-18},
  abstract = {The main definitions and properties of Lie superalgebras are proposed a la facon de a short dictionary, the different items following the alphabetical order. The main topics deal with the structure of simple Lie superalgebras and their finite dimensional representations; rather naturally, a few pages are devoted to supersymmetry. This modest booklet has two ambitious goals: to be elementary and easy to use. The beginner is supposed to find out here the main concepts on superalgebras, while a more experimented theorist should recognize the necessary tools and informations for a specific use.},
  organization = {arXiv.org},
  file = {/Users/aidensuter/Zotero/storage/VN5X7W4D/Frappat et al. - 1996 - Dictionary on Lie Superalgebras.pdf}
}

@online{arakawaQuasilisseVertexAlgebras2017,
  title = {Quasi-lisse vertex algebras and modular linear differential equations},
  author = {Arakawa, Tomoyuki and Kawasetsu, Kazuya},
  date = {2017-07-20},
  eprint = {1610.05865},
  eprinttype = {arXiv},
  eprintclass = {hep-th},
  doi = {10.48550/arXiv.1610.05865},
  urldate = {2024-06-26},
  abstract = {We introduce a notion of quasi-lisse vertex algebras, which generalizes admissible affine vertex algebras. We show that the normalized character of an ordinary module over a quasi-lisse vertex operator algebra has a modular invariance property, in the sense that it satisfies a modular linear differential equation. As an application we obtain the explicit character formulas of simple affine vertex algebras associated with the Deligne exceptional series at level \$-h\textasciicircum\{\textbackslash vee\}/6-1\$, which express the homogeneous Schur indices of 4d SCFTs studied by Beem, Lemos, Liendo, Peelaers, Rastelli and van Rees, as quasi-modular forms.},
  pubstate = {prepublished},
  keywords = {17B69 17B67 11F22 81R10,High Energy Physics - Theory,Mathematics - Quantum Algebra,Mathematics - Representation Theory},
  file = {/Users/aidensuter/Zotero/storage/4FAZU3IY/Arakawa and Kawasetsu - 2017 - Quasi-lisse vertex algebras and modular linear dif.pdf;/Users/aidensuter/Zotero/storage/FYMEHVHY/1610.html}
}

@article{zhuModularInvarianceCharacters1996,
  title = {Modular invariance of characters of vertex operator algebras},
  author = {Zhu, Yongchang},
  date = {1996},
  journaltitle = {Journal of the American Mathematical Society},
  shortjournal = {J. Amer. Math. Soc.},
  volume = {9},
  number = {1},
  pages = {237--302},
  issn = {0894-0347, 1088-6834},
  doi = {10.1090/S0894-0347-96-00182-8},
  urldate = {2024-06-26},
  langid = {english}
}

@online{bradenQuantizationsConicalSymplectic2022,
  title = {Quantizations of conical symplectic resolutions {{I}}: local and global structure},
  shorttitle = {Quantizations of conical symplectic resolutions {{I}}},
  author = {Braden, Tom and Proudfoot, Nicholas and Webster, Ben},
  date = {2022-05-07},
  eprint = {1208.3863},
  eprinttype = {arXiv},
  eprintclass = {math},
  doi = {10.48550/arXiv.1208.3863},
  urldate = {2024-08-22},
  abstract = {We re-examine some topics in representation theory of Lie algebras and Springer theory in a more general context, viewing the universal enveloping algebra as an example of the section ring of a quantization of a conical symplectic resolution. While some modification from this classical context is necessary, many familiar features survive. These include a version of the Beilinson-Bernstein localization theorem, a theory of Harish-Chandra bimodules and their relationship to convolution operators on cohomology, and a discrete group action on the derived category of representations, generalizing the braid group action on category O via twisting functors. Our primary goal is to apply these results to other quantized symplectic resolutions, including quiver varieties and hypertoric varieties. This provides a new context for known results about Lie algebras, Cherednik algebras, finite W-algebras, and hypertoric enveloping algebras, while also pointing to the study of new algebras arising from more general resolutions.},
  pubstate = {prepublished},
  keywords = {16S38 14A22 53D55 16W70,Mathematics - Algebraic Geometry,Mathematics - Representation Theory,Mathematics - Symplectic Geometry},
  file = {/Users/aidensuter/Zotero/storage/DQDNN5ZQ/Braden et al. - 2022 - Quantizations of conical symplectic resolutions I.pdf;/Users/aidensuter/Zotero/storage/JLFXECMB/1208.html}
}

@online{bradenQuantizationsConicalSymplectic2022a,
  title = {Quantizations of conical symplectic resolutions {{II}}: category $\mathcal{{O}}$ and symplectic duality},
  shorttitle = {Quantizations of conical symplectic resolutions {{II}}},
  author = {Braden, Tom and Licata, Anthony and Proudfoot, Nicholas and Webster, Ben},
  date = {2022-05-07},
  eprint = {1407.0964},
  eprinttype = {arXiv},
  eprintclass = {math},
  doi = {10.48550/arXiv.1407.0964},
  urldate = {2024-08-22},
  abstract = {We define and study category \$\textbackslash mathcal O\$ for a symplectic resolution, generalizing the classical BGG category \$\textbackslash mathcal O\$, which is associated with the Springer resolution. This includes the development of intrinsic properties parallelling the BGG case, such as a highest weight structure and analogues of twisting and shuffling functors, along with an extensive discussion of individual examples. We observe that category \$\textbackslash mathcal O\$ is often Koszul, and its Koszul dual is often equivalent to category \$\textbackslash mathcal O\$ for a different symplectic resolution. This leads us to define the notion of a symplectic duality between symplectic resolutions, which is a collection of isomorphisms between representation theoretic and geometric structures, including a Koszul duality between the two categories. This duality has various cohomological consequences, including (conjecturally) an identification of two geometric realizations, due to Nakajima and Ginzburg/Mirkovi\textbackslash 'c-Vilonen, of weight spaces of simple representations of simply-laced simple algebraic groups. An appendix by Ivan Losev establishes a key step in the proof that \$\textbackslash mathcal O\$ is highest weight.},
  pubstate = {prepublished},
  keywords = {Mathematics - Algebraic Geometry,Mathematics - Representation Theory,Mathematics - Symplectic Geometry},
  file = {/Users/aidensuter/Zotero/storage/9H3RG8A6/Braden et al. - 2022 - Quantizations of conical symplectic resolutions II.pdf;/Users/aidensuter/Zotero/storage/Q6JLPMYQ/1407.html}
}

@article{dongVertexAlgebrasAssociated1993,
  title = {Vertex {{Algebras Associated}} with {{Even Lattices}}},
  author = {Dong, C. Y.},
  date = {1993-10-15},
  journaltitle = {Journal of Algebra},
  shortjournal = {Journal of Algebra},
  volume = {161},
  number = {1},
  pages = {245--265},
  issn = {0021-8693},
  doi = {10.1006/jabr.1993.1217},
  urldate = {2024-09-09},
  file = {/Users/aidensuter/Zotero/storage/ELZYSGFB/S0021869383712172.html}
}

@book{humphreysIntroductionLieAlgebras1972,
  title = {Introduction to {{Lie Algebras}} and {{Representation Theory}}},
  author = {Humphreys, James E.},
  date = {1972},
  series = {Graduate {{Texts}} in {{Mathematics}}},
  volume = {9},
  publisher = {Springer},
  location = {New York, NY},
  doi = {10.1007/978-1-4612-6398-2},
  urldate = {2024-09-10},
  keywords = {algebra,algebraic geometry,automorphism,field,homomorphism,Lie,lie algebra,linear algebra,matrix,polynomial,representation theory,transformation},
  file = {/Users/aidensuter/Zotero/storage/BQ7EIBUT/Humphreys - 1972 - Introduction to Lie Algebras and Representation Th.pdf}
}

@online{gaiottoTwistedCompactifications3d2018,
  title = {Twisted compactifications of 3d {{N}} = 4 theories and conformal blocks},
  author = {Gaiotto, Davide},
  date = {2018-09-19},
  eprint = {1611.01528},
  eprinttype = {arXiv},
  eprintclass = {hep-th, physics:math-ph},
  doi = {10.48550/arXiv.1611.01528},
  urldate = {2024-09-18},
  abstract = {Three-dimensional N = 4 supersymmetric quantum field theories admit two topological twists, the Rozansky-Witten twist and its mirror. Either twist can be used to define a supersymmetric compactification on a Riemann surface and a corre- sponding space of supersymmetric ground states. These spaces of ground states can play an interesting role in the Geometric Langlands program. We propose a description of these spaces as conformal blocks for certain non-unitary Vertex Operator Algebras and test our conjecture in some important examples. The two VOAs can be constructed respectively from a UV Lagrangian description of the N = 4 theory or of its mirror. We further conjecture that the VOAs associated to an N = 4 SQFT inherit properties of the theory which only emerge in the IR, such as enhanced global symmetries. Thus knowledge of the VOAs should allow one to compute the spaces of supersymmetric ground states for a theory coupled to supersymmetric background connections for the full symmetry group of the IR SCFT. In particular, we propose a conformal field theory description of the spaces of ground states for the T[SU(N)] theories. These theories play a role of S-duality kernel in maximally supersymmetric SU(N) gauge theory and thus the corresponding spaces of supersymmetric ground states should provide a kernel for the Geometric Langlands duality for special unitary groups.},
  pubstate = {prepublished},
  keywords = {High Energy Physics - Theory,Mathematical Physics,Mathematics - Representation Theory},
  file = {/Users/aidensuter/Zotero/storage/TWJHRHSJ/Gaiotto - 2018 - Twisted compactifications of 3d N = 4 theories and.pdf}
}

@article{Gotz:2006qp,
    author = "Gotz, Gerhard and Quella, Thomas and Schomerus, Volker",
    title = "{The WZNW model on PSU(1,1|2)}",
    eprint = "hep-th/0610070",
    archivePrefix = "arXiv",
    reportNumber = "DESY-06-147, KCL-MTH-06-09, SPHT-T06-049",
    doi = "10.1088/1126-6708/2007/03/003",
    journal = "JHEP",
    volume = "03",
    pages = "003",
    year = "2007"
}
\end{document}